\theoremstyle{plain}
\newtheorem{thm}{Theorem}[section]
\newtheorem{prop}[thm]{Proposition}
\newtheorem{cor}[thm]{Corollary}
\newtheorem{lem}[thm]{Lemma}
\theoremstyle{definition}
\newtheorem{df}{Definition}[section]
\theoremstyle{remark}
\newtheorem{rmk}{Remark}[section]
\newtheorem*{ac}{Acknowledgements}
\newtheorem*{conflict}{Conflict of interest}
\newcommand{\nn}{\mathbb{N}}
\newcommand{\zz}{\mathbb{Z}}
\newcommand{\qq}{\mathbb{Q}}
\newcommand{\rr}{\mathbb{R}}
\newcommand{\gh}{\mathcal{GH}}
\DeclareMathOperator{\card}{Card}
\DeclareMathOperator{\cl}{CL}
\DeclareMathOperator{\met}{Met}
\newcommand{\grsp}{\mathscr{M}}
\DeclareMathOperator{\grdis}{\mathcal{GH}}
\newcommand{\cind}{\nn}
\DeclareMathOperator{\hdis}{\mathcal{H}}
\DeclareMathOperator{\dis}{dis}
\newcommand{\setd}{\mathscr{DO}}
\newcommand{\setud}{\mathscr{UD}}
\newcommand{\setup}{\mathscr{UP}}
\newcommand{\qcube}{\mathbf{Q}}
\DeclareMathOperator{\aaa}{\mathcal{A}}
\DeclareMathOperator{\ccc}{\mathscr{CA}}
\DeclareMathOperator{\iii}{\mathscr{I}}
\newcommand{\dimdim}{\mathcal{D}}
\newcommand{\rela}{\mathscr{R}}
\newcommand{\relacc}{\mathscr{CR}}
\newcommand{\mset}{K}
\newcommand{\msetdis}{k}
\newcommand{\hensu}{\mathcal{W}}
\renewcommand{\p@enumii}{}
\begin{document}

\title[Branching geodesics]
{
 Branching geodesics of the Gromov--Hausdorff distance
 }

\thanks{
This paper is published in 
\textit{Analysis and Geometry in Metric Spaces},  
Volume 10,  Issue 1, 2022, 109--128.
(Open Access)}
\author[Yoshito Ishiki]
{Yoshito Ishiki}
\address[Yoshito Ishiki]
{\endgraf
Photonics Control Technology Team
\endgraf
RIKEN Center for Advanced Photonics
\endgraf
2-1 Hirasawa, Wako, Saitama 351-0198, Japan}
\email{yoshito.ishiki@riken.jp}

\subjclass[2020]{Primary 53C23, Secondary 54E52, 51F99}
\keywords{ 
Meagerness, 
Geodesic, 
Gromov--Hausdorff distance, 
Hilbert cube}

\begin{abstract}
In this paper,  
we first evaluate topological distributions  of 
the sets of all doubling spaces, all uniformly disconnected spaces, and all uniformly perfect spaces  in the
space of all isometry  classes of compact metric spaces
equipped with the 
 Gromov--Hausdorff distance.  
We then  construct  
branching geodesics of 
the Gromov--Hausdorff distance continuously  parameterized by the Hilbert cube,  
passing through or  avoiding 
 sets of all spaces satisfying  some of  
the three properties shown above, 
and 
 passing through the sets of all infinite-dimensional spaces and the set of all Cantor metric spaces. 
Our construction implies 
that for 
every pair of compact metric spaces, 
there exists a topological embedding 
of  the Hilbert cube  into the Gromov--Hausdorff space
whose image contains the pair. 
From  our results, 
 we observe  that 
 the sets explained above are geodesic spaces
 and infinite-dimensional. 
\end{abstract}

\maketitle

\tableofcontents

\section{Introduction}
In this paper, 
we denote by 
 $\grsp$  
 the  set of all isometry classes of
  non-empty compact metric spaces, and 
 denote by 
 $\grdis$
the Gromov--Hausdorff distance. 
We refer to  
$(\grsp, \grdis)$ as  the 
\emph{Gromov--Hausdorff space}.
We denote by 
$\qcube$ 
the product space 
$[0, 1]^{\nn}$ 
of the countable copies of the unit interval.  
The space 
$\qcube$ 
is called the \emph{Hilbert cube}.

In this paper,  we first evaluate   topological distributions of the sets of all doubling spaces, all uniformly disconnected spaces, and 
all uniformly perfect spaces in 
$(\grsp, \grdis)$, respectively. 
We then show that 
the existence of 
continuum many branching geodesics 
passing through or avoiding 
 sets of all spaces satisfying  some of  
the three properties shown above, 
or passing through  
the sets of all infinite-dimensional spaces 
and the set of all Cantor metric spaces, 
 by  constructing  
 a family of geodesics continuously parametrized by 
the Hilbert cube. 
This construction implies that 
for a given pair of compact metric spaces, 
there exists a topological embedding from 
the Hilbert cube 
into 
$\grsp$
whose image contains 
a  pair of compact metric spaces. 
From  our results, 
 we observe  that 
 the sets explained above are geodesic spaces
 and infinite-dimensional.

Before precisely  stating our results, 
we introduce  basic concepts. 

Let 
$N\in \nn$. 
A metric space 
$(X, d)$ is said to be 
\emph{$N$-doubling} 
if for all  
$x\in X$ 
and 
$r\in (0, \infty)$ 
there exists a subset 
$F$ of $X$ 
satisfying  that 
$B(x, r)\subset \bigcup_{y\in F}B(y, r/2)$ 
and 
$\card(F)\le N$, 
where 
$B(x, r)$ 
is the closed ball centered at 
$x$ 
with radius 
$r$, 
and the symbol 
``$\card$'' 
stands for the cardinality. 
A metric space is said to be  
\emph{doubling} if
 it is 
 $N$-doubling for some $N$.

Let 
$\delta\in (0, \infty)$. 
A metric space 
$(X, d)$ is said to be 
\emph{ $\delta$-uniformly disconnected} 
if 
for every non-constant finite sequence 
$\{z_i\}_{i=1}^N$ 
in 
$X$
we have 
$\delta d(z_1, z_N)\le \max_{1\le i\le N}d(z_i, z_{i+1})$. 
A metric space is said to be 
\emph{uniformly disconnected} if it is 
$\delta$-uniformly disconnected 
for some $\delta \in (0, \infty)$. 
Note that 
a metric space is uniformly disconnected 
if and only if it is bi-Lipschitz embeddable 
into an ultrametric space (see \cite[Proposition 15.7]{DS1997}).

Let 
$c\in (0, 1)$. 
A metric space 
$(X, d)$ 
is said to be 
\emph{$c$-uniformly perfect} 
if
for every 
$x\in X$, 
and 
for every 
$r\in (0, \delta_d(X))$, 
there exists 
$y\in X$ 
with 
$c\cdot r\le d(x, y)\le r$, 
where 
$\delta_{d}(X)$ 
stands for the diameter. 
A metric space is said to be 
\emph{uniformly perfect} if 
it is 
$c$-uniformly perfect for some 
$c\in (0, 1)$.

Let 
$(X, d)$ 
and 
$(Y, e)$ 
be  metric spaces. 
A homeomorphism 
$f:X\to Y$ 
is said to be 
\emph{quasi-symmetric} 
if 
there exists  a homeomorphism 
$\eta:[0,\infty)\to [0,\infty)$ 
such that 
for all 
$x, y, z\in X$ 
and 
for every 
$t\in [0, \infty)$ 
the inequality  
$d(x, y)\le td(x, z)$
implies  the inequality
$e(f(x), f(y))\le \eta(t)e(f(x), f(z))$. 
For example, 
all bi-Lipschitz homeomorphisms  
are quasi-symmetric. 
Note that the doubling property, 
the uniform disconnectedness, 
and 
the uniform perfectness 
are
invariant under quasi-symmetric maps. 
In this paper, 
we denote by 
$\Gamma$ 
the Cantor set. 
David and Semmes \cite{DS1997} 
proved  
that 
if 
a compact metric space is  
doubling,  
uniformly disconnected, 
and 
uniformly perfect,  
then  
 it is quasi-symmetrically equivalent 
 to the  Cantor set 
 $\Gamma$ 
 equipped  with the Euclidean metric 
 (\cite[Proposition 15.11]{DS1997}).

Let 
$X$ 
be a  topological space. 
A subset 
$S$
 of 
 $X$ 
 is said to be \emph{nowhere dense} if 
the complement of the closure of 
$S$ 
is dense in 
$X$. 
A subset 
of 
$X$ 
is said to be  \emph{meager} if 
it
is 
the  union of countable nowhere dense subsets of 
$X$. 
A subset of $X$
is said to be 
\emph{comeager} if its complement is meager. 
A subset  of 
$X$
is said to be 
$F_{\sigma}$ (resp.~$G_{\delta}$) 
if 
it is 
the union  of  countably many  closed subsets of 
$X$ 
(resp.~the intersection of countably many open subsets of 
$X$).  
A subset 
of 
$X$ 
is said to be 
$F_{\sigma\delta}$ 
(resp.~$G_{\delta\sigma}$) 
if 
it
is the intersection   of  countably many  
$F_{\sigma}$ 
subsets of 
$X$ 
(resp.~the union  of countably many 
$G_{\delta}$ subsets of 
$X$).

There are some results on topological distributions 
in the Gromov--Hausdorff space and spaces of metrics. 
Rouyer \cite{Rouyer2011} 
proved that
several properties on metric spaces are generic. 
For example, 
it was  proven
 that the set of  
 all compact metric spaces homeomorphic 
 to the  Cantor set, 
 the set of all 
compact metric spaces 
with zero Hausdorff dimension and 
lower box dimension, 
and the set of all compact metric spaces with infinite 
upper box dimension  are 
 comeager in 
$(\grsp, \grdis)$.

For a metrizable space 
$X$, 
we denote by 
$\met(X)$ 
the space of 
all metrics generating the same topology of 
$X$. 
We consider that 
$\met(X)$ 
is equipped with the supremum metric. 
In \cite{Ishiki2020int} and \cite{ishiki2021dense}, 
the author determined  
 the topological distributions of 
the doubling property, 
the uniform disconnectedness, 
 the uniform perfectness, 
and their negations  in 
$\met(X)$ 
for a suitable space 
$X$. 
For example, 
in \cite{Ishiki2020int}, it was proven that 
the set of all non-doubling metrics 
and 
the set of all non-uniformly doubling metrics 
are
dense 
$G_{\delta}$ 
in 
$\met(X)$ 
for a non-discrete space 
$X$.

Let 
$\setd$, 
$\setud$, 
and 
$\setup$ be  
the sets of all doubling metric spaces, 
 all uniformly disconnected metric spaces, 
and 
all 
uniformly perfect metric spaces in 
$\grsp$, 
respectively.

\begin{thm}\label{thm:typicality}
The sets 
$\setd$, 
$\setud$, 
and 
$\setup$ 
are dense  
$F_{\sigma}$ 
and meager in 
the Gromov--Hausdorff space 
$(\grsp, \grdis)$.  
\end{thm}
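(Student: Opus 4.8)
The plan is to split each of the three sets into a countable union of ``quantitative'' pieces and to verify the three assertions piece by piece. Write
\[
\setd=\bigcup_{N\in\nn}\asa_N,\qquad \setud=\bigcup_{n\in\nn}\asb_n,\qquad \setup=\bigcup_{n\ge 2}\asc_n,
\]
where $\asa_N,\asb_n,\asc_n\subseteq\grsp$ are the sets of isometry classes that are $N$-doubling, $(1/n)$-uniformly disconnected, and $(1/n)$-uniformly perfect, respectively; since each of the three properties only weakens when its parameter is relaxed, these unions do recover $\setd$, $\setud$, $\setup$. For the $F_{\sigma}$ assertion I would prove that every piece is closed in $(\grsp,\grdis)$: if $X_k\to X$ with each $X_k$ in a fixed piece, transport the data witnessing a possible failure in $X$ (a point together with a radius $r<\di(X)$ in the doubling and uniform-perfectness cases, a non-constant finite sequence in the uniform-disconnectedness case) into $X_k$ along a correspondence of distortion $\eta_k\to 0$, apply the defining inequality inside $X_k$, transport the finitely many auxiliary points it produces back to $X$ at the cost of an $O(\eta_k)$ additive error, let $k\to\infty$, and use compactness of $X$ (pass to a convergent subsequence of those auxiliary points) to absorb the residual error. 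Hence $\setd$, $\setud$, $\setup$ are $F_{\sigma}$.

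For density, recall that the finite metric spaces are dense in $(\grsp,\grdis)$ (an $\epsilon$-net is a subspace within Hausdorff distance $\epsilon$) and that every finite space is doubling and uniformly disconnected, which already gives density of $\setd$ and $\setud$. Nontrivial finite spaces are never uniformly perfect, so for $\setup$, given $X$ and $\epsilon>0$ with $\di(X)\ge\epsilon$, I would take a maximal $(\epsilon/3)$-separated set $F=\{x_1,\dots,x_m\}\subseteq X$ and set $Y=F\times[0,1]$ with the metric $\max\bigl(d_X(x_i,x_j),\,\delta|s-t|\bigr)$ for a fixed $\delta\le\epsilon/3$. Then $F$ embeds isometrically into both $X$ and $Y$, so $\grdis(X,Y)\le\epsilon/3+\delta<\epsilon$; and $Y$ is uniformly perfect, because from each point $(i,u)\in Y$ the set of realized distances is an interval $[0,\rho]$ with $\delta/2\le\rho\le\delta$ together with the finite set $\{d_X(x_i,x_j):j\ne i\}$, all of whose members are $\ge\epsilon/3$, so that one uniform-perfectness constant (depending on $X$ and $\epsilon$) works at every point and at every scale below $\di(Y)$. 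The remaining case $\di(X)<\epsilon$ is immediate, since then $X$ is $\epsilon$-close to the one-point space, which is uniformly perfect.

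For meagerness, since the pieces are closed it suffices to show each has dense complement, i.e.\ that every $X\in\grsp$ is $\epsilon$-approximated by a space outside the piece. To a fixed $x_0\in X$ I would metrically attach a small ``bad gadget'' (new points $z$ with $d_Z(z,x)=d_Z(z,x_0)+d_X(x,x_0)$ for $x\in X$): for the doubling piece, a regular simplex on $N+1$ new points, mutually and to $x_0$ at distance $\epsilon/2$, so that the ball of radius $\epsilon/2$ about one of them contains $N+1$ points each isolated at scale $\epsilon/4$ and hence needs more than $N$ balls of half the radius; for the $(1/n)$-uniform-disconnectedness piece, a chain of $n+1$ colinear new points $p_1,\dots,p_{n+1}$ with step $\tfrac{\epsilon}{2(n+1)}$, so that $x_0,p_1,\dots,p_{n+1}$ has endpoints at distance $\tfrac{\epsilon}{2}$ while every step equals $\tfrac{\epsilon}{2(n+1)}$, violating $(1/n)$-uniform-disconnectedness; for the $(1/n)$-uniform-perfectness piece, a single new point $p$ with $d(p,x_0)=\epsilon/2$, so that $p$ has no neighbour at distance in $(0,\epsilon/2)$ and hence none in $[r/n,r]$ for $r=\epsilon/4<\di(Z)$. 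In each case the resulting space $Z$ contains $X$ isometrically and lies in the $(\epsilon/2)$-neighbourhood of $X$, so $\grdis(X,Z)\le\epsilon/2$, while $Z$ lies outside the relevant piece. Thus each piece is nowhere dense, and $\setd$, $\setud$, $\setup$ are meager.

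I expect the main obstacle to be the closedness of the pieces — in particular, invoking compactness of the limit space to discard the radius and distortion slack that the correspondence argument unavoidably leaves behind — with the verification that the space $Y$ used for the density of $\setup$ is genuinely uniformly perfect across the macroscopic scales coming from the net $F$ as a secondary technical point.
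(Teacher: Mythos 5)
Your proof is correct, and its skeleton --- write each of $\setd$, $\setud$, $\setup$ as a countable union of quantitatively defined pieces, show each piece is closed via $\epsilon$-approximations, get density from explicit nearby models, and deduce meagerness from closed pieces with dense complements --- is exactly the paper's. The differences are in the ingredients. For $\setd$ the paper does not use the $N$-doubling pieces: it first recasts the doubling property as the counting inequality $\card(A)\le C\cdot(\delta_d(A)/\alpha_d(A))^{\beta}$ over finite subsets $A$ (Lemma \ref{lem:doublingprop}) and takes the pieces $\mathscr{S}(C,\beta)$; closedness is then an immediate passage to the limit in an inequality over each fixed finite set, with no need for your compactness/subsequence extraction of covering centers (which is the genuinely delicate step in your version, though it does go through because the definition uses closed balls and non-strict cardinality bounds). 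For $\setup$ the paper indexes its pieces by a constant $c$ and an explicit scale cutoff $t$, which makes the closedness argument independent of the continuity of the diameter under Gromov--Hausdorff convergence that your version implicitly relies on when transporting a radius $r<\delta_d(X)$ into the approximating spaces. For density and for the dense complements the paper does not build ad hoc gadgets: it glues scaled copies of Cantor metric spaces of a prescribed type $(u,v,w)$ along an $\epsilon$-net (Lemma \ref{lem:denseuvw}), which yields in one stroke the density of every class $\mathscr{X}(u,v,w)$ --- including the complements $\grsp\setminus\setd$, $\grsp\setminus\setud$, $\grsp\setminus\setup$ needed for meagerness --- at the cost of invoking the existence results of \cite{Ishiki2019}. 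Your elementary constructions (finite nets for $\setd$ and $\setud$, a net crossed with short intervals for $\setup$, and the attached simplex, chain, and isolated point for the three complements) are self-contained and perfectly adequate for Theorem \ref{thm:typicality} alone, but they would not by themselves give the density of the mixed type classes that the paper also needs for Theorem \ref{thm:topdisuvw}.
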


To simplify our description,
 the symbols 
 $\mathscr{P}_1$, 
 $\mathscr{P}_2$,  
 and $\mathscr{P}_3$ 
stand for the doubling property, 
the uniform disconnectedness, 
and 
the uniform perfectness, 
respectively. 
Let 
$\mathscr{P}$ 
be a property of metric spaces. 
If a metric space 
$(X, d)$ 
satisfies 
the property 
$\mathscr{P}$, 
then we write 
$T_{\mathscr{P}}(X, d)=1$; 
otherwise, 
$T_{\mathscr{P}}(X, d)=0$.
For a triple 
$(u_1,u_2,u_3)\in \{0,1\}^3$, 
we say 
that 
a metric space 
$(X, d)$ 
is 
\emph{of type $(u_1,u_2,u_3)$} 
if we have
$T_{\mathscr{P}_k}(X, d)=u_k$ 
for all 
$k\in \{1, 2, 3\}$.

A topological space is said to be a 
\emph{Cantor space} 
if 
it is 
homeomorphic to the Cantor set 
$\Gamma$. 
The author \cite{Ishiki2019} 
proved that 
for every 
$(u, v, w)\in \{0, 1\}^{3}$ 
except 
$(1, 1, 1)$, 
the set of all quasi-symmetric equivalence classes of 
Cantor metric spaces of type 
$(u, v, w)$ 
has exactly  continuum many  elements. 
In \cite{ishiki2021dense}, 
the author determined the 
topological distribution of 
the set of all metrics of type 
$(u, v, w)$ 
in 
$\met(\Gamma)$. 
In this paper,  we develop these results in  
the context of 
the Gromov--Hausdorff space.

Let
$\mathscr{Q}_{1}=\setd$, 
$\mathscr{Q}_{2}=\setud$, 
and 
$\mathscr{Q}_{3}=\setup$.
For 
$k\in \{1, 2, 3\}$, 
and for 
$u\in \{0, 1, 2\}$, 
we define 
\[
\mathscr{E}_{k}(u)=
\begin{cases}
\grsp \setminus \mathscr{Q}_{k} & \text{if $u=0$;}\\
\mathscr{Q}_{k} & \text{if $u=1$;}\\
\grsp & \text{if $u=2$}. 
\end{cases}
\]
For 
$(u, v, w)\in \{0, 1, 2\}^{3}$,  
we  also define

\[
\mathscr{X}(u, v, w)=
 \mathscr{E}_{1}(u)\cap \mathscr{E}_{2}(v)\cap \mathscr{E}_{3}(w). 
\]
The next theorem is an analogue  of 
\cite[Theorem 1.6]{ishiki2021dense}. 
\begin{thm}\label{thm:topdisuvw}

Let $(u, v, w)\in \{0, 1, 2\}^{3}$. Then 
the following statements hold true. 
\begin{enumerate}
\item 
If 
$\{u, v, w\}\setminus \{2\}=\{1\}$, 
then 
the set 
$\mathscr{X}(u, v, w)$ is dense $F_{\sigma}$. 
\item 
If 
$\{u, v, w\}\setminus \{2\}=\{0\}$, 
then 
the set 
$\mathscr{X}(u, v, w)$ 
is dense 
$G_{\delta}$. 
\item 
If 
$\{u, v, w\}\setminus \{2\}=\{0, 1\}$, 
then
the set 
$\mathscr{X}(u, v, w)$ 
is dense 
$F_{\sigma\delta}$ 
and 
$G_{\delta\sigma}$. 
\end{enumerate}
\end{thm}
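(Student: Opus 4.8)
The plan is to treat the Borel-complexity statements and the density statements separately: the former reduce to formal closure properties of the Borel classes, the density in case (2) reduces to the Baire category theorem, and the density in cases (1) and (3) reduces to a single ``blow-up'' construction producing, close to any given space, a space of any prescribed type.

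\emph{Borel complexity.} By Theorem~\ref{thm:typicality} each $\mathscr{Q}_k$ is $F_\sigma$, so $\mathscr{E}_k(0)=\grsp\setminus\mathscr{Q}_k$ is $G_\delta$ and $\mathscr{E}_k(2)=\grsp$ is clopen. Since $\grsp$ is metrizable, open sets are $F_\sigma$ and closed sets are $G_\delta$, whence every $F_\sigma$ set and every $G_\delta$ set is simultaneously $F_{\sigma\delta}$ and $G_{\delta\sigma}$. As each of these four classes is closed under finite intersections, case (1) exhibits $\mathscr{X}(u,v,w)$ as a finite intersection of $F_\sigma$ sets, hence $F_\sigma$; case (2) as a finite intersection of $G_\delta$ sets, hence $G_\delta$; and in case (3) one has $\mathscr{X}(u,v,w)=(\bigcap_{k\in A}\mathscr{Q}_k)\cap(\bigcap_{j\in B}(\grsp\setminus\mathscr{Q}_j))$ with $A,B\neq\emptyset$, a finite intersection of sets lying in $F_{\sigma\delta}\cap G_{\delta\sigma}$, hence both $F_{\sigma\delta}$ and $G_{\delta\sigma}$.

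\emph{Density.} Since $(\grsp,\grdis)$ is complete, hence Baire, and each $\mathscr{Q}_k$ is meager, each $\mathscr{E}_k(0)$ is comeager, so in case (2) $\mathscr{X}(u,v,w)$ is a finite intersection of comeager sets, hence comeager and dense. For cases (1) and (3), using $\mathscr{E}_k(2)=\mathscr{E}_k(0)\cup\mathscr{E}_k(1)$ one sees that every $\mathscr{X}(u,v,w)$ is a finite union of sets $\mathscr{X}(u',v',w')$ with $(u',v',w')\in\{0,1\}^3$, and that in case (1) moreover $\mathscr{X}(u,v,w)\supseteq\mathscr{X}(1,1,1)$; so it suffices to prove $\mathscr{X}(u,v,w)$ dense for each $(u,v,w)\in\{0,1\}^3$. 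Given a compact $X$ and $\varepsilon>0$, pick a maximal $\varepsilon$-separated (hence $\varepsilon$-dense) set $\{x_1,\dots,x_n\}\subseteq X$ and form $Y$ by replacing each $x_i$ with a compact metric space $P_i$ of diameter $\varepsilon$, declaring $d_Y$ to restrict to the metric of $P_i$ on $P_i$ and to equal $d_X(x_i,x_j)$ between $P_i$ and $P_j$ for $i\neq j$; this is a metric and $\grdis(X,Y)\le\varepsilon$. If all $P_i$ are scaled Cantor sets then $Y$ is doubling (a finite union of doubling sets is doubling), uniformly disconnected (inter-cluster jumps dominate), and uniformly perfect (at scales below $\varepsilon$ use uniform perfectness of $P_i$, and at scales above $\varepsilon$ note $P_i$ contains a point at distance in $[\varepsilon/2,\varepsilon]$ of the given one), so $Y\in\mathscr{X}(1,1,1)$. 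For a general type $(u,v,w)\in\{0,1\}^3$, take one $P_i$ (or adjoin one more cluster) to be a small scaled compact metric space of exactly that type -- available from \cite{Ishiki2019}, or assembled from a uniformly disconnected, uniformly perfect, non-doubling space, a Euclidean arc, and a sequence $\{0\}\cup\{a_m\}_{m\ge1}$ with $a_m/a_{m+1}\to\infty$. Because being non-doubling and being non-uniformly-disconnected pass to superspaces, while doubling, uniform disconnectedness, and -- since the pieces have equal diameter -- uniform perfectness are preserved by this amalgamation, $Y$ has type $(u,v,w)$ and lies within $\varepsilon$ of $X$.

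\emph{Main obstacle.} The formal part and case (2) are routine; the real content is the realization of $\{0,1\}^3$-types by compact model spaces, above all a compact space that is uniformly disconnected and uniformly perfect but not doubling (and one that is doubling and uniformly perfect but not uniformly disconnected) -- exactly the point at which one invokes or reproves \cite{Ishiki2019}. The remaining work is quantitative bookkeeping: when the target type requires non-uniform-perfectness one must keep the defect cluster small enough that its scale gaps are not filled in by the other clusters, and one must keep track of the doubling, uniform-disconnectedness and uniform-perfectness constants of the amalgam $Y$; none of this is deep, but it is where the verification lives.
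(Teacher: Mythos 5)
Your proposal is correct and follows essentially the same route as the paper: the Borel-complexity claims are the same formal closure argument from Theorem \ref{thm:typicality}, and your blow-up construction (replacing the points of an $\varepsilon$-net by small compact model spaces of prescribed type taken from \cite{Ishiki2019}, then invoking the amalgamation/disjoint-union lemmas) is precisely the paper's Lemma \ref{lem:denseuvw}. The only deviation is that you handle density in case (2) by Baire category, which additionally requires knowing that $(\grsp, \grdis)$ is a Baire space, whereas the paper obtains all the density statements, including those for the all-zero type, uniformly from the same explicit construction.
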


Let 
$(X, d)$ 
be a metric space, and $a, b\in \rr$. 
A continuous map  
$\gamma:[a, b]\to X$ 
is said to be 
a 
\emph{curve}. 
For $x, y\in X$, 
a curve 
$\gamma:[0, 1]\to X$ 
is said to be  
a \emph{geodesic from $x$ to $y$} if 
$\gamma(0)=x$ and $\gamma(1)=y$, and 
for all 
$s, t\in [0, 1]$ 
we have 
\[
d(\gamma(s), \gamma(t))=|s-t|\cdot d(x, y). 
\]
Note that if there exists a curve whose length is 
$d(x, y)$, then there exists a geodesic from $x$
to $y$ (see \cite[Chapter 2]{BBI}). 
A metric space is said to be a 
\emph{geodesic space} if 
for all two points, 
there exists a geodesic connecting  them.

Ivanov, Nikolaeva, and Tuzhilin  \cite{INT2016}
 proved that 
 $(\grsp, \grdis)$ 
 is a 
 geodesic 
space 
by showing 
the existence of the mid-point of all two points of 
$\grsp$. 
Klibus \cite{K2018} proved that the closed ball in the Gromov--Hausdorff space centered at the one-point metric space
is  a geodesic space. 
Chowdhury and M\'{e}moli \cite{CM2018} 
constructed an explicit  geodesic in 
$(\grsp, \grdis)$ 
 using 
an optimal closed correspondence 
(see also  \cite{IIT2016}). 
They also 
 showed that 
 $(\grsp, \grdis)$ 
 permits branching geodesics by  constructing 
branching geodesics from the one-point metric space. 
M\'{e}moli and Wan \cite{memoli2021characterization} 
showed that every Gromov--Hausdorff geodesic is 
realizable as a geodesic in the Hausdorff hyperspace of some metric space. 
For every pair of two distinct compact metric spaces, they also constructed countably many geodesics connecting that metric spaces. 
As a development of their results on branching geodesics,  in this paper, for all pairs of compact metric spaces in 
$\grsp$,  we construct branching geodesics connecting 
them continuously parametrized by $\qcube$.

\begin{df}\label{df:bbg}
Let 
$A$ 
be a closed subset of 
$[0, 1]$ with $\{0, 1\}\subset A$. 
Let 
$(X, d), (Y, e)\in \grsp$. 
We say that a continuous map 
$F:[0, 1]\times \qcube \to \grsp$ 
is 
an 
\emph{$A$-branching 
bunch of geodesics from $(X, d)$ to $(Y, e)$} 
if the following are satisfied:
\begin{enumerate}
\item 
for every  
$q\in \qcube $, 
we have 
$F(0, q)=(X, d)$ 
and 
$F(1, q)=(Y, e)$;\label{item:end}
\item 
for every 
$s\in A$ 
and  for all 
$q, r\in \qcube$
we have 
$F(s, q)=F(s, r)$;\label{item:A}
\item 
for each 
$q\in \qcube $, 
the map 
$F_{q}: [0, 1]\to \grsp$ 
defined by 
$F_{q}(s)=F(s,  q)$ 
is 
a geodesic from  
$(X, d)$ to 
$(Y, e)$;\label{item:Fq}
\item 
for all  
$(s, q),  (t, r)\in ([0, 1]\setminus A)\times \qcube$ 
with 
$(s, q)\neq (t, r)$, 
we have 
$F(s, q)\neq F(t, r)$. 
\label{item:noniso}
\end{enumerate}
\end{df}

\begin{figure}[h]
\centering 
\includegraphics[width=\columnwidth]{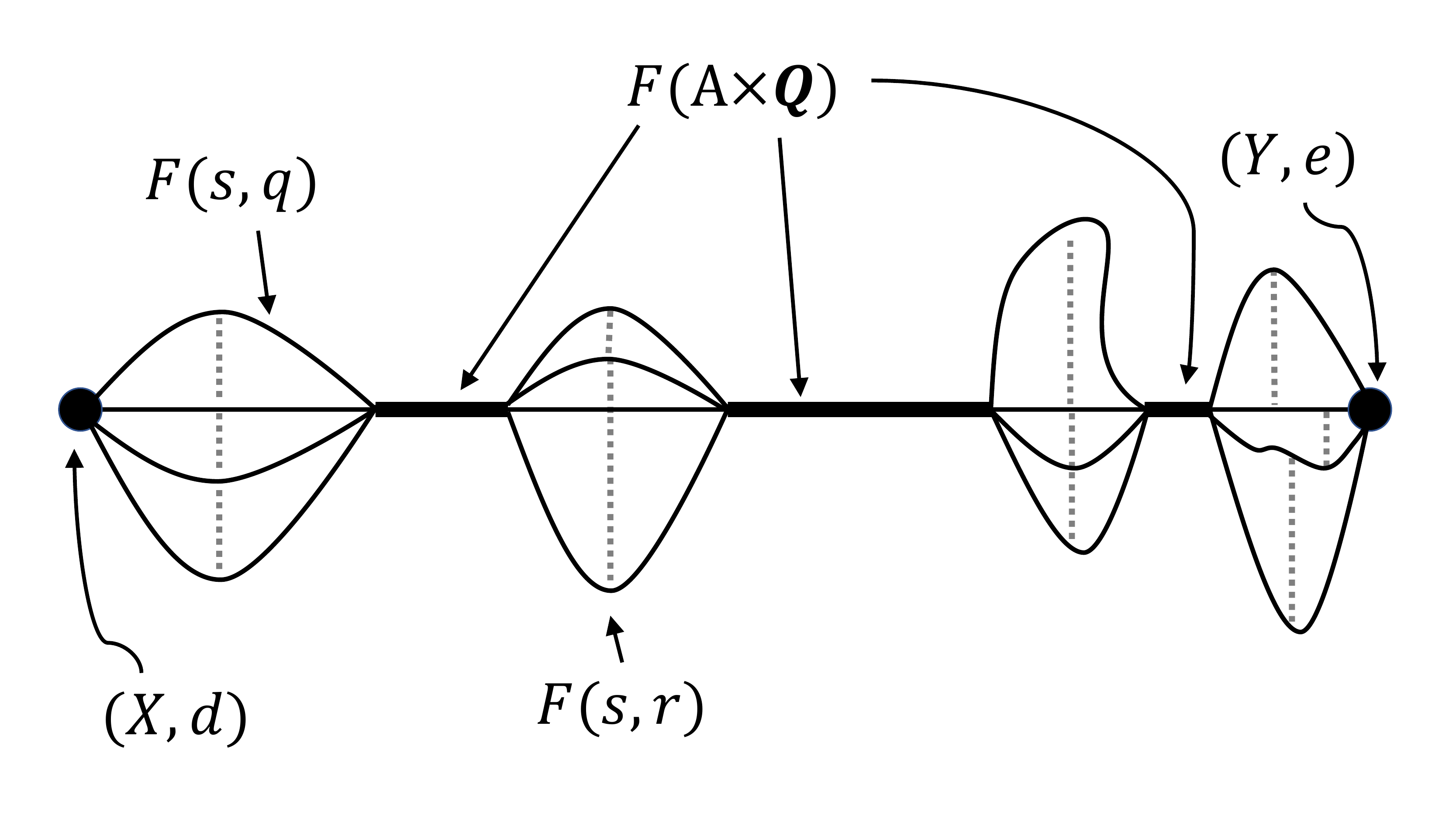}
\caption{$A$-branching bunch of geodesics}
\end{figure}

\begin{thm}\label{thm:q-emb}
Let 
$(u, v, w)\in \{0, 1, 2\}^{3}$. 
Let 
$(X, d), (Y, e)\in \mathscr{X}(u, v, w)$. 
Let 
$A$ 
be a closed subset of 
$[0, 1]$ with $\{0, 1\}\subset A$. 
Then there exists 
an $A$-branching bunch of  geodesics 
$F: [0, 1]\times \qcube \to \mathscr{X}(u, v, w)$  
from 
$(X, d)$ to 
$(Y, e)$. 
\end{thm}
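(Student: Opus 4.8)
We may assume $(X,d)\neq(Y,e)$, since a geodesic from a space to itself is constant and so the case $(X,d)=(Y,e)$ only occurs with $A=[0,1]$ and $F$ constant. Put $\rho:=\grdis((X,d),(Y,e))>0$. The plan is to treat the case $A=\{0,1\}$ first and then bootstrap. Granting that case, fix one geodesic $\gamma$ in $\mathscr{X}(u,v,w)$ from $(X,d)$ to $(Y,e)$ (say $\gamma=F_{q_0}$ for some $q_0$ in the $\{0,1\}$-bunch), write $[0,1]\setminus A=\bigsqcup_n(a_n,b_n)$ with $a_n,b_n\in A$, let $F$ equal $\gamma$ on $A\times\qcube$, and on each $[a_n,b_n]\times\qcube$ let it be an affine reparametrization of a $\{0,1\}$-branching bunch from $\gamma(a_n)$ to $\gamma(b_n)$ (these endpoints lie in $\mathscr{X}(u,v,w)$ and are distinct, since $\grdis(\gamma(a_n),\gamma(b_n))=(b_n-a_n)\rho>0$). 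Conditions (1)--(3) are immediate; for continuity at a point of $A$ that is a limit of the intervals $(a_n,b_n)$ I use that $\grdis(\gamma(a_n),F(s,q))=\grdis(\gamma(a_n),\gamma(s))=(s-a_n)\rho$ for $s\in[a_n,b_n]$, so $\grdis(F(s,q),\gamma(s))\le 2(b_n-a_n)\rho\to 0$; and for (4) I note that when $s\neq t$ one already has $\grdis(X,F(s,q))=s\rho\neq t\rho=\grdis(X,F(t,r))$, while for $s=t$ it is exactly condition (4) of the bunch on the interval containing $s$.

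For $A=\{0,1\}$, the model case makes the scheme transparent. Take $(X,d)$ a one-point space and $(Y,e)=\{0,1\}$ (so $\rho=1/2$), fix a topological embedding $E\colon\qcube\to\grsp$ with $\di E(q)\le 1$, set $\theta(t):=\min\{t,1-t\}$, and define $F(t,q):=p\star_t\big(\theta(t)\cdot E(q)\big)$, where $p\star_r C$ denotes $C$ with a single extra point $p$ adjoined at distance $r$ from every point of $C$. Then $F(0,q)$ is the one-point space, $F(1,q)=\{0,1\}$, and each $F_q$ is a geodesic: the obvious correspondence between $F_q(s)$ and $F_q(t)$ has distortion $\le|s-t|$ because $\theta$ is $1$-Lipschitz and $\di E(q)\le 1$, and because $\grdis(X,Y)=\rho$ any $\rho$-Lipschitz curve from $X$ to $Y$ is automatically a geodesic. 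For $t\in(0,1)$ the adjoined point is recognizable, so $q\mapsto F(t,q)$ is injective and, together with $\grdis(X,F(t,q))=t\rho$, this yields (4); and $F(t,q)$ is non-doubling, non-uniformly-disconnected, or non-uniformly-perfect as soon as one of $X,Y,E(q)$ is. For general $(X,d),(Y,e)$ the plan is to run the same scheme: build, for each $q$, a genuine geodesic $F_q$ from $(X,d)$ to $(Y,e)$ inside $\mathscr{X}(u,v,w)$ by interpolating along a Chowdhury--M\'emoli-type path and weaving in, at interior times only, a $q$-dependent Cantor-type decoration scaled by $\theta(t)$ so that it degenerates at $t\in\{0,1\}$, with the decoration carrying a recognizable metric fingerprint of $q$.

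The hard part is that one cannot produce the bunch by perturbing a single geodesic: since $\grdis(X,Y)=\rho$, every $\rho$-Lipschitz curve from $(X,d)$ to $(Y,e)$ is automatically a geodesic, hence tight at every pair of parameters, so there is no slack in which to ``thicken'' it — the $q$-family must consist of genuinely distinct geodesics and the $q$-dependence has to be designed into the geometry of each interpolant. Concretely the two demanding points are (i) arranging the decoration so that no distance $\grdis(F(s,q),F(t,q))$ exceeds $|s-t|\rho$, which is why it is attached symmetrically and scaled by $\theta(t)=\min\{t,1-t\}$; and (ii) ensuring every $F(t,q)$ lies in $\mathscr{X}(u,v,w)$, i.e. that the construction simultaneously preserves the doubling property, uniform disconnectedness, uniform perfectness and each of their negations. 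Non-doubling and non-uniform-disconnectedness pass to subsets and survive gluing on far-away pieces, so those reduce to short amalgamation lemmas; the genuinely delicate case is uniform perfectness together with its negation, since the latter is not inherited by superspaces — attaching a small isolated cluster kills uniform perfectness, while keeping the space uniformly perfect forbids creating gaps at scale $\theta(t)$. So the decoration must be chosen according to which of $(X,d),(Y,e)$ are uniformly perfect (spread out so as not to create gaps when uniform perfectness must survive, isolated otherwise), or, better, chosen uniformly so that at scale $\theta(t)$ it reproduces whatever features $X$ or $Y$ already have; carrying out this bookkeeping over the triples $(u,v,w)$ is the core of the argument. After that, continuity of $F$ on $[0,1]\times\qcube$, the geodesic equation, and the injectivity underlying (4) all follow routinely from the explicit formulas.
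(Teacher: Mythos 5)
Your high-level strategy is the paper's: interpolate along a Chowdhury--M\'emoli geodesic built from an optimal closed correspondence, and attach at interior times a $q$-dependent ``decoration'' scaled so as to vanish at the endpoints, Lipschitz enough that the distortion estimate $\grdis(F(s,q),F(t,q))\le|s-t|\rho$ survives, and isometrically recognizable inside $F(s,q)$. Your reduction of a general closed $A$ to the case $A=\{0,1\}$ by patching bunches over the complementary intervals is a legitimate alternative to the paper's device (an $L$-Lipschitz $\zeta$ with $\zeta^{-1}(0)=A$ from Lemma \ref{lem:lip}), and your continuity and condition-(\ref{item:noniso}) checks for the patched map are sound.

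However, the proposal stops exactly where the real work begins, and on two points this is a genuine gap rather than omitted routine detail. First, you posit a topological embedding $E\colon\qcube\to\grsp$ whose images serve as ``fingerprints,'' but you never construct one, never show the fingerprint can be recovered \emph{isometrically} from the glued space $F(s,q)$ (for a one-point $X$ the adjoined point is recognizable; for general $X$, $Y$ it is not, and the paper must introduce the telescope spaces $(U(\mathcal{P}_q),e_{\mathcal{P}_q})$ of Definition \ref{df:tail}, prove they are pairwise non-isometric even up to scaling (Proposition \ref{prop:Uiso}), prove continuity in $q$ (Proposition \ref{prop:Uconti}), and design the case-dependent invariant $\mathcal{B}_{(u,v,w)}$ --- involving an Assouad-dimension bound $M$ on the interpolants and decoration pieces of prescribed, pairwise distinct local Assouad dimensions --- to extract $(U_q,\zeta_2(s)e_q)$ from the isometry class of $F(s,q)$). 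Second, you explicitly identify the verification that $F(s,q)\in\mathscr{X}(u,v,w)$ for all eight triples as ``the core of the argument'' and then defer it; the paper resolves it by first fattening the geodesic to $(R\times C,\,d_s\times_\infty\zeta_1(s)h)$ with $(C,h)$ a Cantor space of \emph{totally exotic} type $(u,v,w)$ (Theorem \ref{thm:totexo}), which is precisely what makes the negations of the three properties survive the product and the attachment (Lemmas \ref{lem:qsprod}, \ref{lem:produp}, \ref{lem:unionup}, \ref{lem:timestotexo}), and by choosing the decoration pieces $(P_{i,j},p_{i,j})$ to be either one-point spaces or uniformly controlled sequentially metrized Cantor spaces according as $w=0$ or $w=1$. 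Without these two constructions the argument does not close.
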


We say that a map
$\dimdim:\grsp\to [0, \infty]$ 
is a \emph{dimensional function} if
\begin{enumerate}
\item 
 for every  
 $(X, d)\in \grsp$, 
 and for every closed subset 
 $A$ of $X$, 
we have 
$\dimdim(A, d)\le \dimdim(X, d)$;
\item 
for all $\epsilon\in (0, \infty)$, 
there exists a compact metric space 
$(Y, e)$ with 
$\dimdim(Y, e)=\infty$ and $\delta_{e}(Y)\le \epsilon$.\label{item:dimfunc2}
\end{enumerate}
For example, 
the covering  dimension (topological dimension) 
$\dim$, 
the Hausdorff dimension 
$\dim_{H}$, 
the packing dimension 
$\dim_{P}$, 
the lower box dimension 
$\underline{\dim_{B}}$, 
the upper box dimension 
$\overline{\dim_{B}}$,
and 
the Assouad dimension 
$\dim_{A}$
are dimensional functions. 

For a dimensional function 
$\dimdim$, 
we denote by  
$\iii(\dimdim)$  
the set of all compact metric spaces 
$(X, d)\in \grsp$ 
with 
$\dimdim(X, d)=\infty$. 
Note that 
$\iii(\dimdim)\neq \emptyset$.

\begin{thm}\label{thm:q-embinfinite}
Let 
$\dimdim$ 
be a dimensional function. 
Let 
$(X, d), (Y, e)\in \iii(\dimdim)$, 
and  
$A$  
a closed subset of 
$[0, 1]$ with $\{0, 1\}\subset A$. 
Then there exists 
an $A$-branching bunch of  geodesics 
$F: [0, 1]\times \qcube \to \iii(\dimdim)$
  from 
  $(X, d)$ to 
  $(Y, e)$. 
\end{thm}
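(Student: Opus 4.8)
The plan is to re-run the construction behind Theorem~\ref{thm:q-emb} while carrying along, inside every space that is produced, a fixed isometric copy of an infinite-dimensional compact space as a closed subset, and then to finish with the first defining property of a dimensional function. Recall the shape of the proof of Theorem~\ref{thm:q-emb}: one fixes a background geodesic $\sigma\colon[0,1]\to\grsp$ from $(X,d)$ to $(Y,e)$, sets $F(s,q)=\sigma(s)$ for $s\in A$ (which forces Definition~\ref{df:bbg}\eqref{item:A} and, at the endpoints, \eqref{item:end}), and over each bounded complementary interval $(a,b)$ of $A$ builds a $\qcube$-parametrised bunch of short geodesics from $\sigma(a)$ to $\sigma(b)$, the non-isometry requirement \eqref{item:noniso} being arranged by attaching to the interior spaces suitably small auxiliary metric spaces with pairwise distinct isometry types that also differ from everything occurring on $A$. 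For the present theorem the only new constraint is that every value of $F$ lie in $\iii(\dimdim)$.

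The key point is a choice of the background geodesic for which infinite-dimensionality is visibly preserved. Take $\sigma$ to be the mid-point geodesic of Ivanov--Nikolaeva--Tuzhilin: the mid-point $M$ of $(P,\rho_P)$ and $(Q,\rho_Q)$ is realised on the disjoint union $P\sqcup Q$ with a metric restricting to $\rho_P$ on $P$ and to $\rho_Q$ on $Q$, so $M$ contains isometric closed copies of both $P$ and $Q$. Running the dyadic subdivision, an easy induction on the subdivision level shows that for every dyadic $t\in[0,1)$ the space $\sigma(t)$ contains an isometric closed copy of $(X,d)$ (and, symmetrically, of $(Y,e)$ for dyadic $t\in(0,1]$). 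This then propagates to all $t$ by the elementary fact that if $V_{k}\to V$ in $(\grsp,\grdis)$ and each $V_{k}$ contains an isometric closed copy of a fixed compact space $W$, then so does $V$: realise the convergence inside a single compact ambient space $Z$ via isometric embeddings $\iota_{k}\colon V_{k}\hookrightarrow Z$, $\iota\colon V\hookrightarrow Z$ with $\iota_{k}(V_{k})\to\iota(V)$ in the Hausdorff metric, compose fixed isometric embeddings $W\hookrightarrow V_{k}$ with the $\iota_{k}$, and extract by Arzel\`a--Ascoli a subsequential limit, which is an isometric embedding $W\hookrightarrow Z$ with image in the closed set $\iota(V)$. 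Hence $\sigma(t)$ contains an isometric closed copy of $(X,d)$ for every $t\in[0,1)$, and $\sigma(t)\in\iii(\dimdim)$ for every $t\in[0,1]$ by the first defining property of $\dimdim$ together with the hypothesis $(X,d),(Y,e)\in\iii(\dimdim)$.

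For the local bunches over a gap $(a,b)$ one arranges, exactly as in the proof of Theorem~\ref{thm:q-emb} but organising the gluings so that copies of $\sigma(a)$ and of $\sigma(b)$ persist, that every interior space is a disjoint union of (rescaled pieces of) $\sigma(a)$, $\sigma(b)$ and of the small auxiliary spaces, joined by admissible cross-metrics; thus every such space again contains an isometric closed copy of $(X,d)$. If one prefers an approach in which the auxiliary distinguishing spaces themselves are required to be infinite-dimensional, the second defining property of a dimensional function, \eqref{item:dimfunc2}, supplies members of $\iii(\dimdim)$ of arbitrarily small diameter to use in their place, without disturbing any of the estimates; this is also what guarantees $\iii(\dimdim)\ne\emptyset$ in the first place. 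Granting this, every space $F(s,q)$ contains an isometric closed copy of $(X,d)\in\iii(\dimdim)$, so $\dimdim(F(s,q))=\infty$, i.e.\ $F(s,q)\in\iii(\dimdim)$; continuity of $F$, the conditions \eqref{item:end}, \eqref{item:A}, \eqref{item:Fq}, and the distinctness \eqref{item:noniso} are inherited verbatim from the proof of Theorem~\ref{thm:q-emb}.

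The main obstacle I expect is not conceptual but organisational: one must verify that the concrete construction underlying Theorem~\ref{thm:q-emb} really can be set up so that a single fixed isometric copy of $(X,d)$ survives simultaneously in every background, interior, and limiting space, without clashing with the delicate selection of auxiliary spaces that enforces \eqref{item:noniso}; and one must handle the Gromov--Hausdorff-limit step carefully, since $\iii(\dimdim)$ need not be closed in $(\grsp,\grdis)$ for a general dimensional function, so the limit-persistence lemma above — which uses that the same fixed $W$ sits in all the approximating spaces — is doing essential work.
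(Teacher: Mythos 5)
Your overall architecture --- rerun the construction of Theorem \ref{thm:q-emb}, make every space produced contain a closed subset already known to satisfy $\dimdim=\infty$, and finish with the monotonicity axiom of a dimensional function --- is exactly the paper's strategy. But the specific mechanism you propose for producing that closed subset fails. The claim that the background geodesic $\sigma$ can be chosen so that $\sigma(t)$ contains an isometric closed copy of $(X,d)$ for every $t\in[0,1)$ is false for a simple metric reason: since $|\delta_{d'}(Z)-\delta_{d''}(Z')|\le 2\grdis((Z,d'),(Z',d''))$, any geodesic $\gamma$ from $(X,d)$ to $(Y,e)$ satisfies $\delta(\gamma(t))\le \delta_{e}(Y)+2(1-t)\grdis((X,d),(Y,e))$. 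Taking $(Y,e)=(X,\epsilon d)$ with $\epsilon$ small (both spaces lie in $\iii(\dim)$ for the covering dimension, say), the right-hand side is at most $\delta_d(X)\bigl(1-t(1-\epsilon)\bigr)<\delta_{d}(X)$ for every $t>0$, so no $\gamma(t)$ with $t>0$ can contain an isometric copy of $(X,d)$ --- for \emph{any} geodesic, not just a badly chosen one. The same obstruction shows your description of the Ivanov--Nikolaeva--Tuzhilin midpoint cannot be right: if $M=P\sqcup Q$ carried a metric restricting to $\rho_P$ and $\rho_Q$ and $P$ were a single point, then $\grdis(P,M)=\delta(M)/2\ge\delta(Q)/2=\grdis(P,Q)$, so $M$ is not a midpoint. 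Your Arzel\`{a}--Ascoli limit-persistence lemma is correct in itself, but here it only propagates a false premise, and the plan for the local bunches (interior spaces containing unrescaled copies of $\sigma(a)$ and $\sigma(b)$) hits the identical diameter obstruction inside each gap.

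The paper avoids all of this by never asking for unrescaled copies. It takes a perfect compact space $(C,h)$ with $\dimdim(C,h)=\infty$ (Lemma \ref{lem:perfectinfinite}, which is where axiom (\ref{item:dimfunc2}) enters), replaces the interior spaces $(R,d_s)$ of the optimal-correspondence geodesic by the $\ell^{\infty}$-products $(R\times C,\ d_s\times_{\infty}(\zeta_{1}(s)\cdot h))$, where $\zeta_{1}$ is a Lipschitz function vanishing exactly on $\{0,1\}$ with Lipschitz constant tied to $\grdis((X,d),(Y,e))$ so that Proposition \ref{prop:prodgeo} guarantees the result is still a geodesic; it then takes all identifier spaces $(P_{i,j},p_{i,j})$ to be one-point spaces and runs the $w=0$ case of the proof of Theorem \ref{thm:q-emb} verbatim. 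Every $F(s,q)$ with $s\in(0,1)$ then contains a closed slice isometric to $(C,\zeta_{1}(s)\cdot h)$, and monotonicity under closed subsets yields $F(s,q)\in\iii(\dimdim)$. In short: the infinite-dimensional witness must be a \emph{rescaled} auxiliary factor whose scale is controlled by a Lipschitz function compatible with the geodesic estimates, not a fixed isometric copy of $(X,d)$.
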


Let $\ccc$ denote the set of all Cantor metric space  in $\grsp$. 
Note that $\ccc$ is comeger in $\grsp$ (see \cite{Rouyer2011}).
By the same method of the proof of Theorem \ref{thm:q-emb}, we obtain:
\begin{thm}\label{thm:Cantoremb}
Let 
$(X, d), (Y, e)\in \ccc$, 
and  
$A$  
a closed subset of 
$[0, 1]$ with $\{0, 1\}\subset A$. 
Then there exists 
an $A$-branching bunch of  geodesics 
$F: [0, 1]\times \qcube \to \ccc$
  from 
  $(X, d)$ to 
  $(Y, e)$. 
\end{thm}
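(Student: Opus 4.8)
The plan is to run the construction behind the proof of Theorem~\ref{thm:q-emb}, with the class $\mathscr{X}(u,v,w)$ replaced by $\ccc$, and to check that every space it outputs is a Cantor metric space. I would isolate three ingredients: a base geodesic from $(X,d)$ to $(Y,e)$ lying in $\ccc$; a continuous injection of $\qcube$ into $\ccc$ supplying the ``parameters'' of the bunch; and a verification that the operations used to assemble $F$ preserve membership in $\ccc$. We may assume $\grdis((X,d),(Y,e))>0$, the remaining case being degenerate.

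For the base geodesic, fix an optimal closed correspondence $R\subseteq X\times Y$. As a closed subset of the Cantor space $X\times Y$, $R$ is a nonempty compact metrizable totally disconnected space, and a short argument (an optimal closed correspondence between two perfect compacta cannot be scattered, and isolated points can be cleared without raising the distortion) lets us take $R$ perfect, hence itself a Cantor space. Writing $\lambda_s=(1-s)\,d\circ(\pi_X\times\pi_X)+s\,e\circ(\pi_Y\times\pi_Y)$, the function $\lambda_s$ is for $s\in(0,1)$ a genuine metric inducing the subspace topology of $R$, so $(R,\lambda_s)\in\ccc$; and $(R,\lambda_0)\cong(X,d)$, $(R,\lambda_1)\cong(Y,e)$ lie in $\ccc$ by hypothesis. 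By the usual computation with the diagonal correspondence, $s\mapsto(R,\lambda_s)$ is a geodesic from $(X,d)$ to $(Y,e)$ (cf.~\cite{CM2018,INT2016}) contained in $\ccc$.

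Next I would spread this geodesic into a $\qcube$-family. Write $[0,1]\setminus A=\bigsqcup_i(a_i,b_i)$; since $a_i,b_i\in A$, on each $[a_i,b_i]$ the map $F_q$ must be a geodesic between the \emph{fixed} spaces $(R,\lambda_{a_i})$ and $(R,\lambda_{b_i})$, whose Gromov--Hausdorff distance is $\ell_i:=(b_i-a_i)\grdis((X,d),(Y,e))$. The point is that there are continuum many such geodesics, stayable in $\ccc$: identifying underlying sets, the diagonal of $R\times R$ is an optimal correspondence between $(R,\lambda_{a_i})$ and $(R,\lambda_{b_i})$, and for every metric $m$ on $R$ inducing the Cantor topology with $\|m-\lambda_{a_i}\|_\infty\le\ell_i$ and $\|m-\lambda_{b_i}\|_\infty\le\ell_i$, the Cantor metric space $(R,m)$ is a midpoint of $(R,\lambda_{a_i})$ and $(R,\lambda_{b_i})$. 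The straight midpoint $\tfrac12(\lambda_{a_i}+\lambda_{b_i})$ is coordinatewise the centre of the admissible box, and at every pair of points of $R$ not realizing the distortion the box has positive width; there I would insert a perturbation carrying a $q$-fingerprint, tapered to $0$ near the distortion-realizing pairs so as to stay a metric, modelled on the continuous injection $\qcube\to\ccc$, $q\mapsto(\{0,1\}^{\nn},u_q)$, where $u_q(\xi,\eta)=c(q_n)2^{-n}$ with $n=\min\{k:\xi_k\neq\eta_k\}$ and $c\colon[0,1]\to[1,3/2]$ affine. Concatenating straight geodesics through the midpoints $(R,m_q)$, recursing dyadically to fill $[a_i,b_i]$, and tapering the perturbation size to $0$ as $s\to A$ (via a continuous $\phi\colon[0,1]\to[0,\infty)$ with $\phi^{-1}(0)=A$) glues the intervalwise families into a continuous $F\colon[0,1]\times\qcube\to\grsp$ that equals $(R,\lambda_s)$ on $A$, satisfies the endpoint and geodesic conditions, and---after choosing the fingerprints and the taper to separate distinct parameters---satisfies the injectivity condition off $A$. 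Every value $F(s,q)$ is then of the form $(R,m)$ with $m$ inducing the Cantor topology on the Cantor set $R$ (together with, if the construction also employs disjoint unions or rescalings, finitely or countably many Cantor pieces with diameters tending to $0$), hence lies in $\ccc$.

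The step I expect to be the real obstacle is the one already at the heart of Theorem~\ref{thm:q-emb}: arranging the $q$-dependent modification so that each $F_q$ stays an \emph{exact} geodesic. A decoration whose size varies with $s$ contributes extra Gromov--Hausdorff displacement, so it cannot be ``hung on'' the base geodesic but must be absorbed into the distance budget $\ell_i$ already available on $[a_i,b_i]$---this is why the construction is routed through genuine midpoints, perturbed within the admissible box, rather than through pendant pieces, and why the perturbations must vanish along the distortion-realizing pairs. Granting that this bookkeeping is carried out as in Theorem~\ref{thm:q-emb}, the only genuinely new verification for Theorem~\ref{thm:Cantoremb} is that all spaces produced are Cantor metric spaces, which reduces to the elementary facts that a metric inducing the Cantor topology on a Cantor set yields a Cantor metric space, and that rescalings and disjoint unions of two or more Cantor metric spaces with summable diameters are again Cantor metric spaces.
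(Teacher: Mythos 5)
Your high-level plan (rerun the construction of Theorem~\ref{thm:q-emb} and check that every output is a Cantor space) is exactly what the paper does: it takes the map $F$ built in the proof of Theorem~\ref{thm:q-emb} for $(u,v,w)=(1,1,1)$ or $(0,1,1)$ (according to whether the spaces $(R,d_s)$ are doubling) and invokes Brouwer's characterization --- every value is perfect, totally disconnected and compact, hence in $\ccc$. But the branching mechanism you then describe is not the one in that proof, and it is where your argument has a real gap. First, your stated reason for rejecting pendant decorations is incorrect: Proposition~\ref{prop:tailedgeo} shows precisely that a pendant piece $\{o\}\times U_q$ whose size is scaled by a $\grdis((X,d),(Y,e))$-Lipschitz function $\zeta_2$ with $\zeta_2^{-1}(0)=A$ \emph{can} be hung on the base geodesic without destroying the geodesic property, because the $\ell^\infty$-product takes a maximum and the extra displacement $|\zeta_2(s)-\zeta_2(t)|\,\delta_{e_q}(U_q)\le 2|s-t|\,\grdis((X,d),(Y,e))$ is absorbed rather than added. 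This is the paper's route, and having dismissed it you replace it with a perturbation of the metric on the fixed set $R$ inside the ``admissible box.''

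That replacement leaves the hardest requirement of Definition~\ref{df:bbg}, condition (4), essentially unproved. You must show that $F(s,q)$ and $F(t,r)$ are non-isometric \emph{as metric spaces} for all distinct $(s,q),(t,r)$ off $A$; two different metrics on $R$ obtained by inserting ``$q$-fingerprints'' can perfectly well be isometric, and ``choosing the fingerprints and the taper to separate distinct parameters'' names the problem without solving it. The paper spends Section~4 on exactly this point: Proposition~\ref{prop:Uiso} produces a $\qcube$-family $(U_q,e_q)$ of telescope spaces that are provably pairwise non-isometric even up to rescaling (reading off the apex angles $\theta(q_j)$ from the distance spectrum around the unique accumulation point, or using local Assouad dimensions), and an isometric invariant $\mathcal{B}_{(u,v,w)}$ that extracts $(U_q,\zeta_2(s)e_q)$ from $F(s,q)$. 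Your sketch also never addresses why the perturbed functions $m$ satisfy the triangle inequality, nor how the dyadic concatenation of midpoint geodesics converges to a map that is continuous in $(s,q)$ jointly. (Your preliminary step of passing to the perfect kernel of $R$ is plausible but unnecessary: the paper instead takes the $\ell^\infty$-product with a perfect Cantor space $C$, which makes every fibre perfect regardless of $R$.) Until the non-isometry of the parameterized family is established by some concrete invariant, the proposal does not prove the theorem.
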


From  Theorems \ref{thm:q-emb}, 
\ref{thm:q-embinfinite}, and 
\ref{thm:Cantoremb} 
we obtain the following four statements. 
Theorem \ref{thm:branch} is  an improvement  of 
\cite[Theorem 5.13]{memoli2021characterization}. 
\begin{thm}\label{thm:branch}
Let 
$\mathscr{S}$ 
be any one of 
$\mathscr{X}(u, v, w)$ 
for
 some 
 $(u, v, w)\in \{0, 1, 2\}^{3}$ 
or 
$\iii(\dimdim)$ 
for some dimensional function 
$\dimdim$ 
or 
$\ccc$. 
Then 
for  all 
$(X, d), (Y, e)\in \mathscr{S}$ satisfying 
$\grdis((X, d), (Y, e))>0$, 
there are  exact continuum many  geodesics from 
$(X, d)$ to  
$(Y, e)$ passing through  
$\mathscr{S}$. 
\end{thm}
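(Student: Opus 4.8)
The plan is to show that the number of such geodesics is simultaneously at least and at most the cardinality $\card(\rr)$ of the continuum. For the lower bound, fix one of the families $\mathscr{S}=\mathscr{X}(u,v,w)$, $\mathscr{S}=\iii(\dimdim)$, or $\mathscr{S}=\ccc$, together with $(X,d),(Y,e)\in\mathscr{S}$ satisfying $\grdis((X,d),(Y,e))>0$. I would set $A=\{0,1\}$ and apply Theorem~\ref{thm:q-emb}, Theorem~\ref{thm:q-embinfinite}, or Theorem~\ref{thm:Cantoremb} accordingly to obtain an $A$-branching bunch of geodesics $F\colon[0,1]\times\qcube\to\mathscr{S}$ from $(X,d)$ to $(Y,e)$. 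By condition~(\ref{item:Fq}) of Definition~\ref{df:bbg}, for each $q\in\qcube$ the slice $F_{q}\colon[0,1]\to\grsp$, $F_{q}(s)=F(s,q)$, is a geodesic from $(X,d)$ to $(Y,e)$, and since $F$ takes values in $\mathscr{S}$ this geodesic passes through $\mathscr{S}$.

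The next step is to verify that $q\mapsto F_{q}$ is injective. Given $q,r\in\qcube$ with $q\neq r$, choose any $s\in(0,1)=[0,1]\setminus A$; then $(s,q)$ and $(s,r)$ are distinct points of $([0,1]\setminus A)\times\qcube$, so condition~(\ref{item:noniso}) of Definition~\ref{df:bbg} gives $F_{q}(s)=F(s,q)\neq F(s,r)=F_{r}(s)$, hence $F_{q}\neq F_{r}$. Since $\card(\qcube)=\card([0,1]^{\nn})=\card(\rr)$, this produces at least $\card(\rr)$ distinct geodesics from $(X,d)$ to $(Y,e)$ passing through $\mathscr{S}$.

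For the upper bound I would first observe that $\card(\grsp)\le\card(\rr)$: each non-empty compact metric space is separable, and its isometry class is determined by the array of pairwise distances along a countable dense sequence, that is, by an element of $\rr^{\nn\times\nn}$, so $\card(\grsp)\le\card(\rr^{\nn\times\nn})=\card(\rr)$. A geodesic $\gamma\colon[0,1]\to\grsp$ is in particular a continuous map from the separable space $[0,1]$ into the metric, hence Hausdorff, space $(\grsp,\grdis)$, and such a map is uniquely determined by its restriction to the countable dense set $\qq\cap[0,1]$; therefore the collection of all geodesics in $(\grsp,\grdis)$ has cardinality at most $\card(\grsp)^{\card(\nn)}\le\card(\rr)^{\card(\nn)}=\card(\rr)$. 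In particular there are at most $\card(\rr)$ geodesics from $(X,d)$ to $(Y,e)$, whether or not they pass through $\mathscr{S}$. Combining the two bounds yields exactly $\card(\rr)$ geodesics from $(X,d)$ to $(Y,e)$ passing through $\mathscr{S}$.

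I do not expect a genuine obstacle here: all the substantive work has already been carried out in Theorems~\ref{thm:q-emb}, \ref{thm:q-embinfinite}, and~\ref{thm:Cantoremb}, and the present statement is a formal consequence of the $\qcube$-parametrized families they provide together with the elementary count $\card(\grsp)=\card(\rr)$. The only point deserving a moment's attention is that property~(\ref{item:noniso}) of an $A$-branching bunch must be invoked at a parameter $s$ lying outside $A$, which is why the choice $A=\{0,1\}$ (forcing $[0,1]\setminus A=(0,1)\neq\emptyset$) is made; and the hypothesis $\grdis((X,d),(Y,e))>0$ is precisely what makes the resulting non-constant geodesics, and hence the count, meaningful.
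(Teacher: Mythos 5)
Your proposal is correct and follows essentially the same route as the paper: the lower bound comes from the $\{0,1\}$-branching bunch of geodesics supplied by Theorems \ref{thm:q-emb}, \ref{thm:q-embinfinite}, or \ref{thm:Cantoremb} together with condition (\ref{item:noniso}) of Definition \ref{df:bbg}, and the upper bound from the separability of $[0,1]$ and $\card(\grsp)=2^{\aleph_{0}}$. You have merely written out in full the details that the paper's two-sentence proof leaves implicit.
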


In this paper, 
a metric space  is said to be 
\emph{infinite-dimensional}
if its 
covering dimension is infinite. 
\begin{thm}\label{thm:topemb}
Let 
$\mathscr{S}$ 
be any one of 
$\mathscr{X}(u, v, w)$ 
for some 
$(u, v, w)\in \{0, 1, 2\}^{3}$ 
or 
$\iii(\dimdim)$ 
for some dimensional function 
$\dimdim$ 
or  $\ccc$.
Then  for  all 
$(X, d), (Y, e)\in \mathscr{S}$, 
there exists a topological embedding from 
the Hilbert cube 
$\qcube$ into 
$\mathscr{S}$
 whose image contains 
$(X, d)$ and 
$(Y, e)$. 
In particular, 
  $\mathscr{S}$ 
is infinite-dimensional.  
\end{thm}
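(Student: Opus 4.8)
The plan is to derive Theorem~\ref{thm:topemb} directly from Theorems~\ref{thm:q-emb}, \ref{thm:q-embinfinite}, and~\ref{thm:Cantoremb}, together with the elementary fact that a continuous injection from a compact space into a Hausdorff space is a topological embedding. Write $\mathscr{S}$ for the set in question; it is a subspace of the metric space $(\grsp,\grdis)$, hence metrizable and Hausdorff. First I would reduce to the case $\grdis((X,d),(Y,e))>0$. Indeed, each of the three families $\mathscr{X}(u,v,w)$, $\iii(\dimdim)$, $\ccc$ has at least two elements (the first and the last are dense in $(\grsp,\grdis)$; and if $(Z,g)\in\iii(\dimdim)$, then adjoining to $(Z,g)$ a single point at distance exceeding $\delta_{g}(Z)$ produces, by the first axiom for a dimensional function, another member of $\iii(\dimdim)$ of strictly larger diameter), so when $(X,d)=(Y,e)$ we may choose $(Z,g)\in\mathscr{S}\setminus\{(X,d)\}$ and it then suffices to find an embedding of $\qcube$ into $\mathscr{S}$ whose image contains $(X,d)$ and $(Z,g)$.

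So assume $(X,d)\neq(Y,e)$, and apply the relevant one of Theorems~\ref{thm:q-emb}, \ref{thm:q-embinfinite}, \ref{thm:Cantoremb} with $A=\{0,1\}$ to obtain an $A$-branching bunch of geodesics $F\colon[0,1]\times\qcube\to\mathscr{S}$ from $(X,d)$ to $(Y,e)$. Next, choose a continuous function $c\colon\qcube\to[0,1]$ such that $c^{-1}(0)$ and $c^{-1}(1)$ are singletons, say $c^{-1}(0)=\{a\}$ and $c^{-1}(1)=\{b\}$; for instance $c((q_n)_{n\in\nn})=\sum_{n\in\nn}2^{-n}q_n$ has this property, with $a=(0,0,\dots)$ and $b=(1,1,\dots)$. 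Define $g\colon\qcube\to[0,1]\times\qcube$ by $g(q)=(c(q),q)$. Since the second coordinate of $g$ is the identity, $g$ is a continuous injection, hence a topological embedding; moreover $g(\qcube)$ meets $\{0\}\times\qcube$ only in $(0,a)$, meets $\{1\}\times\qcube$ only in $(1,b)$, and is otherwise contained in $(0,1)\times\qcube$.

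The key step is to verify that $\Phi:=F\circ g\colon\qcube\to\mathscr{S}$ is injective. Suppose $\Phi(q)=\Phi(q')$. If $c(q),c(q')\in(0,1)$, then $(c(q),q)$ and $(c(q'),q')$ lie in $([0,1]\setminus A)\times\qcube$, so condition~\eqref{item:noniso} of Definition~\ref{df:bbg} forces $(c(q),q)=(c(q'),q')$, hence $q=q'$. If $c(q)=0$, then $q=a$ and $\Phi(q)=F(0,a)=(X,d)$ by condition~\eqref{item:end}; now $c(q')\in(0,1)$ would give $F_{q'}(c(q'))=(X,d)=F_{q'}(0)$, contradicting $\grdis(F_{q'}(c(q')),F_{q'}(0))=c(q')\cdot\grdis((X,d),(Y,e))>0$, while $c(q')=1$ would give $\Phi(q')=(Y,e)\neq(X,d)$; so $c(q')=0$ and $q'=a=q$. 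The case $c(q)=1$ is symmetric. Therefore $\Phi$ is a continuous injection from the compact space $\qcube$ into the Hausdorff space $\mathscr{S}$, hence a topological embedding, and its image contains $\Phi(a)=(X,d)$ and $\Phi(b)=(Y,e)$.

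For the last assertion, note that $\Phi(\qcube)$ is a compact, hence closed, subset of $\mathscr{S}$, and it is homeomorphic to $\qcube$, which has infinite covering dimension (it contains $[0,1]^{n}$ as a closed subspace for every $n$). By monotonicity of covering dimension under closed subspaces of metrizable spaces, $\dim\mathscr{S}=\infty$, i.e.\ $\mathscr{S}$ is infinite-dimensional. The only delicate point is the injectivity of $\Phi$; it works precisely because $c$ attains each of its extreme values at a single point (so that each of the collapsed fibers $F(\{0\}\times\qcube)=\{(X,d)\}$ and $F(\{1\}\times\qcube)=\{(Y,e)\}$ is met at only one point of the embedded cube) and because $(X,d)\neq(Y,e)$ keeps the geodesics from returning to their endpoints. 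Everything else is formal.
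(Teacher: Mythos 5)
Your proof is correct, and at the key step it takes a genuinely different (and more elementary) route than the paper. The paper realizes the copy of $\qcube$ as the image of the \emph{whole} pinched cylinder: it forms $\mathbf{K}=\{(s, m(s)q)\}$ with $m(s)=\min\{s,1-s\}$, identifies $\mathbf{K}$ with $\qcube$ via the Keller--Dobrowolski--Toru\'nczyk characterization (Theorem \ref{thm:kdt}, together with the Dugundji extension theorem, in Corollary \ref{cor:Hilbertcube}), and then maps $\mathbf{K}$ by $(a,b)\mapsto F(a, b/m(a))$; you instead embed $\qcube$ into the cylinder as the graph of a continuous $c\colon\qcube\to[0,1]$ whose extreme fibers $c^{-1}(0),c^{-1}(1)$ are singletons, so that the collapsed boundary fibers of $F$ are each hit exactly once. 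Your route avoids infinite-dimensional topology entirely and the injectivity check is exactly as you present it (condition~(4) of Definition \ref{df:bbg} in the interior, the geodesic identity at the ends); what the paper's route buys in exchange is the slightly stronger structural fact that the \emph{entire} image $F([0,1]\times\qcube)$ of the branching bunch is itself homeomorphic to $\qcube$, not just a section of it. Two further points in your favour: you explicitly reduce to $\grdis((X,d),(Y,e))>0$, which is needed because the proofs of Theorems \ref{thm:q-emb}--\ref{thm:Cantoremb} (and indeed condition~(4) of Definition \ref{df:bbg}) presuppose distinct endpoints, a case the paper's proof passes over in silence; and your verification of the final dimension claim via monotonicity of covering dimension on closed subspaces is the standard argument the paper leaves implicit.
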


The following theorem  states that  the sets 
$\iii(\dimdim)$ 
and 
$\mathscr{X}(u, v, w)$ and $\ccc$ 
are everywhere 
infinite-dimensional. 
\begin{thm}\label{thm:locinfinite}
Let 
$\mathscr{S}$ 
be any one of  
$\mathscr{X}(u, v, w)$ 
for some 
$(u, v, w)\in \{0, 1, 2\}^{3}$ 
or 
$\iii(\dimdim)$ 
for some 
dimensional function 
$\dimdim$
or $\ccc$. 
Then 
 for every non-empty open subset 
 $O$ 
 of 
$\grsp$, 
the set 
$\mathscr{S}\cap O$ 
is 
infinite-dimensional. 
\end{thm}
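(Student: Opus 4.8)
The plan is to reduce Theorem \ref{thm:locinfinite} to the existence theorems for $A$-branching bunches of geodesics (Theorems \ref{thm:q-emb}, \ref{thm:q-embinfinite}, \ref{thm:Cantoremb}): inside $\mathscr{S}\cap O$ I would produce a topological copy of the Hilbert cube $\qcube$ and then invoke the subspace monotonicity of the covering dimension together with the classical fact that $\dim\qcube=\infty$. The mechanism is that an $A$-branching bunch of geodesics emanating from a point $(X,d)$ has image contained in the closed ball of radius $\grdis((X,d),(Y,e))$ about $(X,d)$, while by condition \eqref{item:noniso} of Definition \ref{df:bbg} it stays injective on $\bigl([0,1]\setminus A\bigr)\times\qcube$; taking $A=\{0,1\}$ and $(Y,e)$ close to $(X,d)$ keeps the whole picture inside $O$.

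The first ingredient I would record is that, in each of the three cases, $\mathscr{S}$ is dense in $(\grsp,\grdis)$. For $\mathscr{S}=\mathscr{X}(u,v,w)$ this is part of Theorem \ref{thm:topdisuvw} when $(u,v,w)\neq(2,2,2)$ and is trivial when $\mathscr{X}(2,2,2)=\grsp$. For $\mathscr{S}=\ccc$ it follows from the fact, recalled above, that $\ccc$ is comeager in $(\grsp,\grdis)$, which is a complete metric space and hence a Baire space. For $\mathscr{S}=\iii(\dimdim)$ I would argue directly: given $(Z,\rho)\in\grsp$ and $\delta>0$, use property \eqref{item:dimfunc2} of a dimensional function to pick $(Y,e)$ with $\dimdim(Y,e)=\infty$ and $\delta_{e}(Y)<\delta$, fix $z_{0}\in Z$ and $y_{0}\in Y$, and let $(W,d_{W})$ be the one-point union of $Z$ and $Y$ obtained by gluing $z_{0}$ to $y_{0}$, metrized by $\rho$ on $Z$, by $e$ on $Y$, and by $d_{W}(z,y)=\rho(z,z_{0})+e(y_{0},y)$ across. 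Then $W$ is a compact metric space containing $Y$ as a closed subset, so $\dimdim(W,d_{W})\geq\dimdim(Y,e)=\infty$ by monotonicity of $\dimdim$, while $Z$ embeds isometrically in $W$ with every point of $W$ within $\delta_{e}(Y)$ of $z_{0}$; hence $\grdis\bigl((Z,\rho),(W,d_{W})\bigr)\leq\delta_{e}(Y)<\delta$.

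Now fix a non-empty open set $O\subseteq\grsp$. By density choose $(X,d)\in\mathscr{S}\cap O$, and pick $\epsilon>0$ with $B((X,d),2\epsilon)\subseteq O$, so that $\bar B((X,d),\epsilon)\subseteq O$. Since $(\grsp,\grdis)$ has no isolated points (a non-degenerate space is a limit of non-isometric rescalings of itself, and the one-point space is a limit of small two-point spaces), the set $B((X,d),\epsilon)\setminus\{(X,d)\}$ is a non-empty open set, so density of $\mathscr{S}$ yields $(Y,e)\in\mathscr{S}$ with $0<\grdis((X,d),(Y,e))<\epsilon$. Applying the existence theorem appropriate to $\mathscr{S}$ (Theorem \ref{thm:q-emb}, \ref{thm:q-embinfinite}, or \ref{thm:Cantoremb}) with the closed set $A=\{0,1\}$ gives an $A$-branching bunch of geodesics $F\colon[0,1]\times\qcube\to\mathscr{S}$ from $(X,d)$ to $(Y,e)$. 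For each $q\in\qcube$, condition \eqref{item:Fq} of Definition \ref{df:bbg} gives $\grdis(F(s,q),(X,d))=s\cdot\grdis((X,d),(Y,e))<\epsilon$ for every $s\in[0,1]$, so $F([0,1]\times\qcube)\subseteq\bar B((X,d),\epsilon)\subseteq O$, and thus $F([0,1]\times\qcube)\subseteq\mathscr{S}\cap O$. By condition \eqref{item:noniso} of Definition \ref{df:bbg}, the restriction of $F$ to the compact set $[1/4,3/4]\times\qcube\subseteq\bigl([0,1]\setminus\{0,1\}\bigr)\times\qcube$ is a continuous injection into the metrizable space $(\grsp,\grdis)$, hence a topological embedding; its image is a subset of $\mathscr{S}\cap O$ homeomorphic to $[1/4,3/4]\times\qcube$, which in turn is homeomorphic to $\qcube$ (since $[1/4,3/4]\times[0,1]^{\nn}\cong[0,1]^{\nn}$). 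As $\mathscr{S}\cap O$ is a separable metric space containing a topological copy of $\qcube$, the subspace theorem for the covering dimension yields $\dim(\mathscr{S}\cap O)\geq\dim\qcube=\infty$, which is the assertion.

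The only step I expect to require genuine care is the density of $\iii(\dimdim)$: density of $\mathscr{X}(u,v,w)$ and of $\ccc$ is immediate from results already cited, whereas for $\iii(\dimdim)$ one must produce, arbitrarily close to an arbitrary compact metric space, a space $(W,d_{W})$ with $\dimdim(W,d_{W})=\infty$, for which the one-point union above—together with the two defining properties of a dimensional function—is the natural device. The remaining passage from a branching bunch of geodesics to an embedded Hilbert cube is routine once one observes that the geodesic property \eqref{item:Fq} confines the image to a small ball and that the endpoint set $A=\{0,1\}$ can be taken as thin as possible.
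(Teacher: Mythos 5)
Your proof is correct and follows essentially the same route as the paper: density of $\mathscr{S}$ in $\grsp$, a $\{0,1\}$-branching bunch of geodesics from a point of $\mathscr{S}\cap O$ to a nearby distinct point of $\mathscr{S}$, confinement of the image to a closed ball inside $O$ via the geodesic condition, an embedded copy of $\qcube$ in that image, and monotonicity of the covering dimension. The only differences are cosmetic: the paper cites Lemma \ref{lem:denseuvw} for density (including that of $\iii(\dimdim)$) and invokes Theorem \ref{thm:topemb} for the Hilbert-cube copy, whereas you re-derive the density of $\iii(\dimdim)$ by a one-point union and extract the copy directly as $F([1/4,3/4]\times\qcube)$, both of which are valid, slightly more self-contained substitutes.
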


Since all separable  metrizable spaces are topologically 
embeddable into $\qcube$, 
we have:
\begin{cor}
If 
 $\mathscr{S}$ 
 is  any one of  
$\mathscr{X}(u, v, w)$ 
for some 
$(u, v, w)\in \{0, 1, 2\}^{3}$, 
or 
$\iii(\dimdim)$ 
for some dimensional function 
$\dimdim$ or 
$\ccc$, 
then 
 every separable  metrizable space 
 $X$ 
 can be topologically embeddable into 
$\mathscr{S}$. 
\end{cor}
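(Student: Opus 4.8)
The plan is to derive the corollary directly from Theorem~\ref{thm:topemb}, combined with the classical fact that every separable metrizable space embeds topologically into the Hilbert cube.

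First I would record that each admissible $\mathscr{S}$ is non-empty, so that Theorem~\ref{thm:topemb} may be applied to it. Indeed, the Cantor set equipped with any compatible metric lies in $\ccc$; the set $\iii(\dimdim)$ is non-empty as noted before the statement of Theorem~\ref{thm:q-embinfinite}; and $\mathscr{X}(u,v,w)$ is non-empty because, by Theorem~\ref{thm:topdisuvw}, it is dense in $(\grsp,\grdis)$ whenever $(u,v,w)\neq(2,2,2)$, while $\mathscr{X}(2,2,2)=\grsp$. Fix a point $(Z,\rho)\in\mathscr{S}$ and apply Theorem~\ref{thm:topemb} with $(X,d)=(Y,e)=(Z,\rho)$: this yields a topological embedding $\Phi\colon\qcube\to\mathscr{S}$.

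Second, by Urysohn's embedding theorem, every separable metrizable space $X$ admits a topological embedding $\psi\colon X\to\qcube=[0,1]^{\nn}$ into the Hilbert cube. Finally, the composition $\Phi\circ\psi\colon X\to\mathscr{S}$ is a topological embedding: it is continuous and injective, and it is a homeomorphism onto its image, since $\psi$ is a homeomorphism onto $\psi(X)$ and the restriction $\Phi|_{\psi(X)}$ is a homeomorphism onto $\Phi(\psi(X))$, so the composite is a homeomorphism onto $\Phi(\psi(X))$. This establishes the corollary.

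There is no genuine obstacle in this argument; its entire substance is imported from Theorem~\ref{thm:topemb} (which in turn rests on Theorems~\ref{thm:q-emb}, \ref{thm:q-embinfinite}, and \ref{thm:Cantoremb}), whose assertion is precisely the existence of a topological embedding of $\qcube$ into $\mathscr{S}$. The only additional ingredients are the standard Hilbert-cube embeddability of separable metrizable spaces and the elementary fact that topological embeddings compose, so the proof reduces to citing these and chaining the two embeddings.
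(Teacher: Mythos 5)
Your proposal is correct and follows exactly the paper's (implicit) argument: the corollary is stated immediately after the remark that all separable metrizable spaces embed topologically into $\qcube$, and the intended proof is precisely the composition of such an embedding with the embedding $\qcube\to\mathscr{S}$ furnished by Theorem \ref{thm:topemb}. Your additional check that each $\mathscr{S}$ is non-empty is a harmless (and reasonable) bit of extra care.
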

\begin{rmk}
It is open whether 
all separable (or compact) metric spaces 
are isometrically embeddable into 
$\grsp$. 
On the other hand, Wan \cite{Wan2021} 
proved that
all separable ultrametric spaces 
are isometrically embeddable into 
 the Gromov--Hausdorff ultrametric space.  
\end{rmk}


The organization of this paper is as follows:
In Section \ref{sec:pre}, 
we prepare and explain 
basic concepts and  statements on metric spaces. 
In Section \ref{sec:topdis}, 
we prove Theorems \ref{thm:typicality} and 
\ref{thm:topdisuvw}. 
In Section \ref{sec:const}, 
we introduce  specific versions of  telescope spaces 
and sequentially metrized Cantor spaces introduced  
in \cite{Ishiki2019}. 
Using  telescope spaces, 
we also construct a family of compact metric spaces 
continuously parameterized  by 
$\qcube$, 
which 
are not isometric to each other. 
In Section \ref{sec:geo}, 
we first prove Theorems 
\ref{thm:q-emb}, 
\ref{thm:q-embinfinite}, and 
\ref{thm:Cantoremb}. 
As its applications, 
we next prove 
Theorems 
\ref{thm:branch}, 
\ref{thm:topemb}, and
\ref{thm:locinfinite}. 
In Section \ref{sec:table},
for the convenience for the readers, 
 we 
exhibit a table of 
symbols. 


\section{Preliminaries}\label{sec:pre}
In this section, 
we prepare 
and explain 
the
basic concepts 
and  
statements 
on metric spaces. 
\subsection{Generalities}
In this paper,  
we denote by 
$\cind$ 
the set of all positive 
integers. 
The symbol 
$\lor$ 
stands for 
the maximal operator of 
$\rr$. 
Let 
$X$ 
be a  set. 
A metric 
$d$ 
on 
$X$ 
is said to be an 
\emph{ultrametric} 
if 
for all 
$x, y, z\in X$ 
the metric 
$d$ 
satisfies 
$d(x, y)\le d(x, z)\lor d(z, y)$. 
In this paper, 
for a metric space 
$(X, d)$, 
and for a subset 
$A$ 
of 
$X$, 
we represent 
the restricted metric 
$d|_{A^2}$ 
as 
  the same symbol 
$d$ 
as  the ambient metric 
$d$.  
For a subset 
$A$ 
of 
$X$, 
we denote by 
$\delta_d(A)$ 
the diameter of 
$A$, 
and 
we define  
$\alpha_d(A)=\inf\{\, d(x, y)\mid x\neq y,\ x, y\in A\, \}$.

The following two lemmas are used to prove our results. 
\begin{lem}\label{lem:lip}
Let 
$L\in (0, \infty)$, 
and  
$A$ 
be a closed subset of 
$[0, 1]$. 
Then there exists an 
$L$-Lipschitz function
$\zeta: [0, 1]\to [0, \infty)$ 
with 
$\zeta^{-1}(0)=A$. 
\end{lem}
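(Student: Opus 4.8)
The plan is to write $\zeta$ down explicitly as a rescaled distance function to $A$. Assuming $A\neq\emptyset$ (if $A=\emptyset$, the constant function $\zeta\equiv 1$ already works: it is $L$-Lipschitz for every $L>0$ and has empty zero set), I would set
\[
\zeta(x)=L\cdot\inf_{a\in A}|x-a|\qquad(x\in[0,1]).
\]
Since $A\subset[0,1]$, the infimum is a finite nonnegative real number for each $x$, so $\zeta$ indeed maps $[0,1]$ into $[0,\infty)$.

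The first step is to check the Lipschitz bound. For $x,y\in[0,1]$ and any $a\in A$, the triangle inequality gives $\inf_{a'\in A}|x-a'|\le|x-a|\le|x-y|+|y-a|$; taking the infimum over $a\in A$ on the right-hand side yields $\inf_{a'\in A}|x-a'|\le|x-y|+\inf_{a\in A}|y-a|$, and exchanging the roles of $x$ and $y$ gives the reverse inequality. Hence $\bigl|\inf_{a\in A}|x-a|-\inf_{a\in A}|y-a|\bigr|\le|x-y|$, and multiplying by $L$ shows that $\zeta$ is $L$-Lipschitz.

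The second step is to identify the zero set. For every $a\in A$ we have $\zeta(a)=0$, so $A\subset\zeta^{-1}(0)$. Conversely, if $\zeta(x)=0$ then $\inf_{a\in A}|x-a|=0$, so there is a sequence of points of $A$ converging to $x$; since $A$ is closed, $x\in A$. Therefore $\zeta^{-1}(0)=A$, which completes the argument.

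There is no genuine obstacle in this lemma: the only points that need care are the standard fact that a distance-to-a-set function is $1$-Lipschitz, and the use of the closedness of $A$ to upgrade the containment $A\subset\zeta^{-1}(0)$ to an equality.
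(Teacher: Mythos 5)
Your proposal is correct and is essentially the paper's own proof: the paper also takes $\zeta = L\cdot\xi$ where $\xi$ is the distance function to $A$, leaving the Lipschitz and zero-set verifications implicit. Your explicit checks, and the separate treatment of the case $A=\emptyset$ (which the paper silently ignores, though it never needs it since $\{0,1\}\subset A$ in all applications), are fine.
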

\begin{proof}
For 
$x\in [0, 1]$, 
let  
$\xi(x)$ be the distance between 
$x$ 
and 
$A$. 
Then the function 
$L\cdot \xi:[0, 1]\to [0, \infty)$ 
is a desired one. 
\end{proof}

\begin{lem}\label{lem:max}
Let 
$x, y, u, v\in \rr$. 
Then, 
we have 
\[
|x\lor y- u\lor v|\le |x-u|\lor |y-v|. 
\]
\end{lem}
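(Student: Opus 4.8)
The plan is to bound the two quantities $x\lor y - u\lor v$ and $u\lor v - x\lor y$ separately, each by $|x-u|\lor|y-v|$; since $|x\lor y - u\lor v|$ equals whichever of these two is nonnegative, the lemma follows at once. Observe that the asserted inequality is symmetric under simultaneously interchanging $x\leftrightarrow u$ and $y\leftrightarrow v$, so it suffices to prove the single inequality $x\lor y - u\lor v \le |x-u|\lor|y-v|$.

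To establish this, I would use an additive trick that avoids case analysis. Write
\[
x=(x-u)+u\le |x-u|+(u\lor v)\le \bigl(|x-u|\lor|y-v|\bigr)+(u\lor v),
\]
and similarly
\[
y=(y-v)+v\le |y-v|+(u\lor v)\le \bigl(|x-u|\lor|y-v|\bigr)+(u\lor v).
\]
Taking the maximum of the two left-hand sides gives $x\lor y\le \bigl(|x-u|\lor|y-v|\bigr)+(u\lor v)$, i.e. $x\lor y - u\lor v\le |x-u|\lor|y-v|$, which is exactly what was needed. Combining this with the symmetric inequality yields $|x\lor y - u\lor v|\le |x-u|\lor|y-v|$.

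There is essentially no serious obstacle here; the only point that needs a word of care is that the absolute value must be controlled in \emph{both} directions, which is precisely why the symmetry reduction is invoked rather than attempting a single chain of inequalities. An alternative, slightly more pedestrian route is to split into the (at most four) cases according to which of $x,y$ attains $x\lor y$ and which of $u,v$ attains $u\lor v$ and check each directly, but the additive argument above dispatches all cases uniformly.
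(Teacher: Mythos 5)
Your proof is correct. It does, however, take a different route from the paper's: the paper reduces by symmetry to the single case $x\lor y=x$ and $u\lor v=v$ (the cases where both maxima are attained by corresponding variables being immediate) and then bounds $x-v$ and $v-x$ separately inside that case, whereas you reduce by the symmetry $(x,y)\leftrightarrow(u,v)$ to the one-sided inequality $x\lor y-u\lor v\le |x-u|\lor|y-v|$ and dispatch it uniformly via the additive bound $x\le |x-u|+(u\lor v)$, $y\le |y-v|+(u\lor v)$. The paper's decomposition is by which variable attains each maximum; yours is by the sign of the difference. Your version avoids the case analysis on the maxima entirely and generalizes immediately to suprema of arbitrary (bounded) families, i.e.\ $|\sup_i a_i-\sup_i b_i|\le\sup_i|a_i-b_i|$, which the paper's two-variable case check does not directly give; the paper's argument is marginally more self-contained in that it needs no symmetry reduction for the absolute value. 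Both are complete and equally rigorous for the statement at hand.
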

\begin{proof}
We only need to  consider the 
 case of 
 $x\lor y=x$ 
 and 
 $u\lor v=v$. 
By 
$u\le v$, 
we obtain 
$x-v\le x-u\le |x-u|$. 
By 
$y\le x$, 
we obtain 
$v-x\le v-y\le |y-v|$. 
These imply the inequality. 
\end{proof}

\subsection{Quasi-symmetrically invariant properties}
For a metric space $(X, d)$, 
the Assouad dimension 
$\dim_{A}(X, d)$ of $(X, d)$ 
is 
defined by the infimum of all  
$\beta\in (0, \infty)$ 
for which 
 there exists 
 $C\in (0, \infty)$ 
 such that 
 for every finite subset 
 $A$ 
 of 
 $X$ 
we have 
$\card(A)\le C\cdot(\delta_d(A)/\alpha_d(A))^{\beta}$. 
By the definition of the doubling property, we obtain the following two lemmas. 
\begin{lem}\label{lem:doublingprop}
A metric space 
$(X, d)$ 
is doubling 
if and only if
there exist 
$\beta\in (0, \infty)$ 
and 
$C\in [1, \infty)$ 
such that 
for every finite subset 
$A$ 
of 
$X$ 
we have 
$\card(A)\le C\cdot(\delta_d(A)/\alpha_d(A))^{\beta}$. 
\end{lem}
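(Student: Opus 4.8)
The plan is to prove the two implications separately; both are classical manipulations of the doubling condition, so I expect no essentially new idea to be needed. Throughout I use that an $N$-doubling space is automatically $N'$-doubling for every $N'\ge N$, which lets me assume $N\ge 2$ whenever convenient (a $1$-doubling space has at most one point, and then everything is trivial), and I treat the degenerate case $\card(A)\le 1$ as immediate.

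For the forward implication I would first iterate the doubling property: if $(X,d)$ is $N$-doubling then, by an easy induction on $k$, every closed ball $B(x,r)$ is covered by at most $N^{k}$ closed balls of radius $r/2^{k}$, for every $k\in\nn$. Now fix a finite $A\subseteq X$ with $\card(A)\ge 2$, put $\delta=\delta_d(A)$ and $\alpha=\alpha_d(A)$ (so $0<\alpha\le\delta<\infty$), choose $a_{0}\in A$ so that $A\subseteq B(a_{0},\delta)$, and let $k$ be the least positive integer with $2^{k}>2\delta/\alpha$; here $k\ge 2$ since $\delta/\alpha\ge 1$, and minimality then gives $2^{k}\le 4\delta/\alpha$. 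Covering $B(a_{0},\delta)$ by at most $N^{k}$ closed balls of radius $\delta/2^{k}$ — each of diameter at most $\delta/2^{k-1}<\alpha$, hence meeting $A$ in at most one point — yields $\card(A)\le N^{k}\le(4\delta/\alpha)^{\log_{2}N}$. This is the desired estimate with $\beta=\log_{2}N\in(0,\infty)$ and $C=4^{\log_{2}N}=N^{2}\in[1,\infty)$.

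For the reverse implication I would use maximal separated sets. Suppose the estimate holds with $\beta\in(0,\infty)$ and $C\in[1,\infty)$, and set $N=\lceil C\cdot 4^{\beta}\rceil\in\nn$. Fix $x\in X$ and $r\in(0,\infty)$ and call a set $S\subseteq B(x,r)$ \emph{$(r/2)$-separated} if $d(a,a')\ge r/2$ for all distinct $a,a'\in S$. For such an $S$ and any finite $S'\subseteq S$ with $\card(S')\ge 2$ one has $\alpha_d(S')\ge r/2$ and $\delta_d(S')\le\delta_d(B(x,r))\le 2r$, so $\delta_d(S')/\alpha_d(S')\le 4$ and the estimate gives $\card(S')\le C\cdot 4^{\beta}\le N$; since every finite subset of $S$ thus has cardinality at most $N$, so does $S$ itself. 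Hence a $(r/2)$-separated set $F\subseteq B(x,r)$ of maximum cardinality exists (no Zorn needed, as all cardinalities are $\le N$), and by maximality every $z\in B(x,r)$ lies within distance $r/2$ of some point of $F$. Thus $B(x,r)\subseteq\bigcup_{y\in F}B(y,r/2)$ with $\card(F)\le N$, so $(X,d)$ is $N$-doubling.

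The computation presents no real obstacle; the only point needing care is the exponent-and-constant bookkeeping in the forward implication — choosing $k$ so that the balls of radius $\delta/2^{k}$ isolate the points of $A$ while $N^{k}$ stays bounded by a fixed power of $\delta_d(A)/\alpha_d(A)$ — together with the harmless degenerate cases, which are dispatched by the reduction to $N\ge 2$ noted at the outset.
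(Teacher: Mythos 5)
Your proof is correct. The paper does not actually prove this lemma --- it simply remarks that it follows ``by the definition of the doubling property'' --- and your argument (iterating the doubling condition to cover a ball of radius $\delta_d(A)$ by $N^{k}$ balls of diameter less than $\alpha_d(A)$ for one direction, and bounding a maximal $(r/2)$-separated set for the other) is exactly the standard argument the author is implicitly invoking, with the constants $\beta=\log_{2}N$, $C=N^{2}$ and $N=\lceil C\cdot 4^{\beta}\rceil$ worked out correctly. The only caveat is the singleton case, where $\alpha_d(A)=\inf\emptyset$ makes the stated inequality literally problematic; but that convention issue is inherited from the lemma's formulation (and from the paper's definition of Assouad dimension), not a gap in your argument, and your explicit reduction to $\card(A)\ge 2$ handles it as well as one can.
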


\begin{lem}\label{lem:subAssouad}
Let 
$(X, d)$ be a metric space,  and 
let $A$ be a subset of $X$. 
Then,  we have  $\dim_{A}(A, d)\le \dim_{A}(X, d)$. 
\end{lem}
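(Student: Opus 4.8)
The plan is to read the inequality off directly from the definition of the Assouad dimension given just above, the only real content being that the diameter and the minimal separation of a finite set are intrinsic to that set and do not depend on the ambient space. First I would dispose of the trivial case: if $\dim_{A}(X, d)=\infty$ there is nothing to prove, so I assume $\dim_{A}(X, d)<\infty$ and fix an arbitrary $\beta$ with $\dim_{A}(X, d)<\beta<\infty$. By the definition of the Assouad dimension there exists $C\in (0, \infty)$ such that every finite subset $E$ of $X$ satisfies $\card(E)\le C\cdot(\delta_d(E)/\alpha_d(E))^{\beta}$.

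Next comes the only substantive observation. A finite subset $E$ of $A$ is in particular a finite subset of $X$, and since the restricted metric on $A$ is denoted by the same symbol $d$ (as fixed in Section \ref{sec:pre}), the quantities $\delta_d(E)$ and $\alpha_d(E)$ take exactly the same values whether $E$ is regarded as sitting inside $A$ or inside $X$. Hence the displayed inequality holds for every finite subset $E$ of $A$ with the very same constant $C$, which means that $\beta$ belongs to the set of exponents over which the infimum defining $\dim_{A}(A, d)$ is taken; therefore $\dim_{A}(A, d)\le \beta$. Letting $\beta$ decrease to $\dim_{A}(X, d)$ then gives $\dim_{A}(A, d)\le \dim_{A}(X, d)$.

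I do not expect a genuine obstacle here; the argument is pure bookkeeping, and in fact it is the reason the covering, Hausdorff, packing, box, and Assouad dimensions are all dimensional functions in the sense of the paper. The only points that need a moment's care are the degenerate conventions implicit in the definitions — the infimum over an empty set of admissible exponents being read as $\infty$, and finite sets $E$ with $\card(E)\le 1$, for which $\alpha_d(E)=\inf\emptyset$ — but neither affects the reasoning: in the first case the asserted inequality is automatic, and in the second case such $E$ may simply be excluded (exactly as is implicitly done in Lemma \ref{lem:doublingprop}) without altering either Assouad dimension.
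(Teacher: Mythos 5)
Your argument is correct and is exactly the direct verification the paper has in mind: the paper offers no written proof, simply asserting the lemma follows from the definition, and your observation that a finite subset of $A$ is a finite subset of $X$ with the same intrinsic $\delta_d$ and $\alpha_d$ (so the admissible exponents for $X$ are admissible for $A$) is precisely that argument. The remarks on the degenerate conventions are sensible and do not change anything.
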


Note that the doubling property is equivalent to 
the finiteness of the Assouad dimension.

By definitions of ultrametric spaces, we obtain:
\begin{lem}\label{lem:ultraud}
Every  ultrametric space is 
$\delta$-uniformly disconnected for all 
$\delta\in (0, 1)$. 
\end{lem}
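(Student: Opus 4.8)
The final statement to prove is Lemma~\ref{lem:ultraud}: every ultrametric space is $\delta$-uniformly disconnected for all $\delta \in (0,1)$.

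\medskip

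The plan is to unwind the two definitions and reduce everything to a single application of the strong triangle inequality. Recall that $(X,d)$ is $\delta$-uniformly disconnected if for every non-constant finite sequence $\{z_i\}_{i=1}^N$ in $X$ we have $\delta\, d(z_1, z_N) \le \max_{1\le i\le N} d(z_i, z_{i+1})$. The crucial structural fact about ultrametrics is that the distance between the endpoints of any finite chain is bounded \emph{above} by the largest step in the chain, i.e.\ $d(z_1, z_N) \le \max_{1\le i\le N-1} d(z_i, z_{i+1})$; this is exactly what makes the uniform disconnectedness inequality hold, and with room to spare.

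\medskip

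Concretely, first I would fix a non-constant finite sequence $\{z_i\}_{i=1}^N$ in $X$ (the non-constancy guarantees $N \ge 2$, so the chain has at least one step, and the right-hand maximum is taken over a non-empty index set). Next I would prove by induction on $N$ that $d(z_1, z_N) \le \max_{1\le i\le N-1} d(z_i, z_{i+1})$: the base case $N = 2$ is trivial, and the inductive step uses the ultrametric inequality $d(z_1, z_N) \le d(z_1, z_{N-1}) \lor d(z_{N-1}, z_N)$ together with the induction hypothesis applied to $\{z_i\}_{i=1}^{N-1}$. Then, since $\delta \in (0,1)$ gives $\delta\, d(z_1, z_N) \le d(z_1, z_N)$, I would chain the two estimates to obtain $\delta\, d(z_1, z_N) \le \max_{1\le i\le N-1} d(z_i, z_{i+1})$, which is the required inequality (the maximum over $1 \le i \le N$ appearing in the definition only enlarges the bound, so this is fine, and in any case one typically reads the definition's index range as the steps of the chain). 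Since $\delta \in (0,1)$ was arbitrary, this completes the proof.

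\medskip

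I do not anticipate any real obstacle here: the only mild subtlety is the bookkeeping on the index range of the maximum in the definition of $\delta$-uniform disconnectedness, and the observation that the factor $\delta < 1$ is not even needed sharply — the ultrametric chain bound already delivers the inequality with constant $1$. The core of the argument is the one-line induction establishing the ultrametric chain inequality, and everything else is immediate.
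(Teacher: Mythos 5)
Your proof is correct and is precisely the argument the paper has in mind: the paper omits the proof entirely, stating only that the lemma follows ``by definitions of ultrametric spaces,'' and the inductive ultrametric chain inequality $d(z_1,z_N)\le \max_{1\le i\le N-1}d(z_i,z_{i+1})$ combined with $\delta<1$ is exactly that omitted content. Your observation that the constant $1$ already suffices (so $\delta<1$ is not needed sharply) is also accurate.
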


For two metric spaces $
(X, d)$ 
and 
$(Y, e)$, 
we denote by 
$d\times_{\infty} e$  
the 
$\ell^{\infty}$-product metric  
defined by 
\[
(d\times_{\infty} e)((x, y), (u, v))=
d(x, u) \lor e(y, v). 
\]
Note that 
$d\times_{\infty} e$ 
generates the product topology of 
$X\times Y$.

In this paper, 
we sometimes use the disjoint union 
$\coprod_{i\in I} X_{i}$ 
of a non-disjoint family 
$\{X_{i}\}_{i\in I}$. 
 Whenever we consider the disjoint  union  
 $\coprod_{i\in I}X_{i}$ of 
 a family 
 $\{X_{i}\}_{i\in I}$ 
 of sets 
 (this family is not necessarily disjoint), 
we  identify the family 
$\{X_{i}\}_{i\in I}$ 
with  its disjoint copy unless otherwise stated. 
If each 
$X_{i}$ 
is a topological space, 
we consider that 
$\coprod_{i\in I}X_{i}$ 
is equipped with 
the direct sum topology. 

Next we review the  basic statements of 
the doubling property, 
the uniform disconnectedness, 
and 
the uniform perfectness. 

The following is presented in \cite[Corollary 10.1.2]{fraser2020assouad}. 
\begin{lem}\label{lem:finAssouad}
Let 
$(X, d)$ 
and 
$(Y, e)$ 
be 
metric spaces. 
Then, 
\[
\dim_{A}(X\times Y, d\times_{\infty} e)\le \dim_{A}(X, d)+\dim_{A}(Y, e).
\] 
\end{lem}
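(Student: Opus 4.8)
The "final statement" requested is Lemma~\ref{lem:finAssouad}, which is cited as already appearing in the literature (\cite[Corollary 10.1.2]{fraser2020assouad}). I provide a proof plan for it.

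The plan is to prove the product bound for the Assouad dimension directly from the characterization of the doubling property in terms of counting finite subsets, as recorded in Lemma~\ref{lem:doublingprop}; more precisely, from the definition of $\dim_A$ given just above that lemma in terms of the exponent $\beta$ for which $\card(A) \le C \cdot (\delta_d(A)/\alpha_d(A))^\beta$ holds uniformly over finite subsets $A$. Write $a = \dim_A(X, d)$ and $b = \dim_A(Y, e)$. If either is infinite there is nothing to prove, so assume both are finite. Fix $\epsilon > 0$. By definition of the Assouad dimension, there are constants $C_1, C_2 \in (0, \infty)$ such that every finite subset $A \subset X$ satisfies $\card(A) \le C_1 (\delta_d(A)/\alpha_d(A))^{a + \epsilon}$, and every finite subset $B \subset Y$ satisfies $\card(B) \le C_2 (\delta_e(B)/\alpha_e(B))^{b + \epsilon}$.

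Next I would take an arbitrary finite subset $S$ of $X \times Y$ equipped with the metric $d \times_\infty e$ and estimate $\card(S)$. Let $p : X \times Y \to X$ and $q : X \times Y \to Y$ be the coordinate projections, and set $A = p(S)$, $B = q(S)$, so $A$ and $B$ are finite and $S \subset A \times B$. The key observation is that both projections are $1$-Lipschitz from the $\ell^\infty$-product metric, so $\delta_d(A) \le \delta(S)$ and $\delta_e(B) \le \delta(S)$, where $\delta(S)$ denotes the diameter of $S$ in $d \times_\infty e$. Combining this with the two counting inequalities gives
\[
\card(S) \le \card(A) \cdot \card(B) \le C_1 C_2 \left( \frac{\delta(S)}{\alpha_d(A)} \right)^{a+\epsilon} \left( \frac{\delta(S)}{\alpha_e(B)} \right)^{b+\epsilon}.
\]
It remains to control $\alpha_d(A)$ and $\alpha_e(B)$ from below by $\alpha(S)$, the minimal distance between distinct points of $S$ in the product metric, and here is the main obstacle: it is \emph{not} true in general that $\alpha_d(p(S)) \ge \alpha(S)$, because two distinct points of $S$ can have the same first coordinate. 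To repair this, I would not project $S$ blindly but rather split the estimate: cover $S$ by the "fibres" over $A$, i.e.\ for each $x \in A$ let $S_x = \{ y \in Y : (x, y) \in S \}$, so that $\card(S) = \sum_{x \in A} \card(S_x)$. Each $S_x$ is a finite subset of $Y$ with $\alpha_e(S_x) \ge \alpha(S)$ (distinct points in a common fibre differ only in the $Y$-coordinate, and the product metric then equals the $Y$-distance) and $\delta_e(S_x) \le \delta(S)$, hence $\card(S_x) \le C_2 (\delta(S)/\alpha(S))^{b+\epsilon}$. Likewise $A$ itself may have points at distance less than $\alpha(S)$ in $X$; but one can instead bound $\card(A)$ by first thinning: the issue is symmetric, and the cleanest route is to apply the fibre argument once more to $A$ viewed inside $S$.

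To make that fully rigorous I would argue as follows: for the outer sum, group the points of $A$ by a maximal $\alpha(S)$-separated net is awkward, so instead observe that $\card(A) \le \card(S)$ is useless; the correct move is to bound $\card(A)$ using that for each $x \in A$ we may pick a representative $y_x$ with $(x, y_x) \in S$, and then $\{(x, y_x) : x \in A\}$ is a subset of $S$ on which $p$ is injective, so distinct such points are at product-distance $\ge \alpha(S)$, whence their $X$-coordinates satisfy $d(x, x') \ge$ nothing a priori --- so this also fails. The genuinely correct and standard argument therefore fixes a scale: for $0 < r \le R$, one shows that any subset of $X \times Y$ of diameter $\le R$ in which all pairwise distances are $\ge r$ has cardinality $\le C_1 C_2 (R/r)^{a + b + 2\epsilon}$, by covering the $X$-projection by $\lesssim (R/r)^{a+\epsilon}$ balls of radius $r/2$ in $X$ (possible since $X$ is doubling at the relevant scale), noting each such ball times $Y$ meets $S$ in a set whose $X$-coordinates lie in a set of diameter $r$ hence which, modulo identifying nearby $X$-points, behaves like a subset of $Y$ of diameter $\le R$ and separation $\ge r/2$, giving $\lesssim (R/r)^{b+\epsilon}$ points. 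Applying this with $R = \delta(S)$, $r = \alpha(S)$ yields
\[
\card(S) \le C_1 C_2' \left( \frac{\delta_{d\times_\infty e}(S)}{\alpha_{d\times_\infty e}(S)} \right)^{a + b + 2\epsilon},
\]
so $\dim_A(X \times Y, d \times_\infty e) \le a + b + 2\epsilon$, and letting $\epsilon \to 0$ finishes the proof. The main obstacle is thus precisely the bookkeeping in the two-scale covering argument --- making sure the "identify nearby $X$-points" step is carried out with an honest $r/2$-net in $X$ rather than a naive projection --- and I would be careful to phrase everything at a fixed pair of scales $(r, R)$ from the outset, which is the form in which the doubling/Assouad counting hypotheses are cleanly applicable.
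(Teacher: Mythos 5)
The paper does not prove this lemma at all: it is quoted directly from the literature (Fraser's book, Corollary 10.1.2), so there is no in-paper argument to compare against. Your final argument is the standard two-scale covering proof of the product inequality for the Assouad dimension, and it is essentially correct: fix $0<r\le R$, take a finite $S\subset X\times Y$ with $\delta_{d\times_\infty e}(S)\le R$ and $\alpha_{d\times_\infty e}(S)\ge r$, cover the projection $p(S)$ by $\lesssim (R/r)^{a+\epsilon}$ pieces of small diameter, observe that within each piece the $X$-coordinates are too close for the product separation to come from $X$, so the $Y$-coordinates form an $r$-separated set of diameter $\le R$, count, multiply, and let $\epsilon\to 0$. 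Two remarks. First, roughly half of your text consists of two false starts (the naive projection bound and the ``representative $y_x$'' trick) that you correctly diagnose as failing; in a final write-up these should be deleted, since only the last paragraph is the proof. Second, your constants are slightly off at the key step: if you cover $p(S)$ by balls of radius $r/2$, the pieces have diameter up to $r$, and then for two points of $S$ in the same piece you only know $\max\{d(x,x'),e(y,y')\}\ge r$ with $d(x,x')\le r$, which does \emph{not} force $e(y,y')\ge r$ (nor $\ge r/2$, as you wrote) --- take $d(x,x')=r$ and $y=y'$. The fix is exactly the kind of bookkeeping you flag: use pieces of diameter strictly less than $r$ (say balls of radius $r/4$ around a maximal $(r/4)$-separated subset of $p(S)$, whose cardinality is still $\lesssim (R/r)^{a+\epsilon}$ by the separated-set characterization of $\dim_A$ used in this paper), after which $e(y,y')\ge r$ holds on the nose and the fibre count goes through. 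With that adjustment the bound $\card(S)\le C(R/r)^{a+b+2\epsilon}$ follows and the lemma is proved.
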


By Lemmas 2.13 and 2.14 in 
\cite{Ishiki2019}, we obtain the following lemma (see also \cite[Remark 2.16]{Ishiki2019}):
\begin{lem}\label{lem:qsprod}
Let 
$(X, d)$ 
and 
$(Y, e)$ 
be 
metric spaces. 
Then for all 
$k\in \{1, 2\}$, 
we obtain 
$T_{\mathscr{P}_{k}}(X\times Y, d\times_{\infty}e)
=T_{\mathscr{P}_{k}}(X, d)
\land T_{\mathscr{P}_{k}}(Y, e)$, 
where 
$\land$ 
is the minimum operator. 
\end{lem}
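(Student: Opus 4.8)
The final statement to prove is Lemma~\ref{lem:qsprod}, which asserts that for $\ell^\infty$-products, the doubling property ($k=1$) and the uniform disconnectedness ($k=2$) of the product are the minimum of the corresponding values on the factors.

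\textbf{Approach.} The plan is to deduce the lemma directly from the cited results in \cite{Ishiki2019} (Lemmas 2.13 and 2.14 and Remark 2.16 there) together with the characterizations already assembled in this section. For the doubling case $k=1$, I would argue via the Assouad dimension: by Lemma~\ref{lem:finAssouad}, $\dim_A(X\times Y, d\times_\infty e)\le \dim_A(X,d)+\dim_A(Y,e)$, so if both factors are doubling (i.e.\ have finite Assouad dimension, by the remark following Lemma~\ref{lem:subAssouad}), then so is the product, giving $T_{\mathscr P_1}(X\times Y)\ge T_{\mathscr P_1}(X)\land T_{\mathscr P_1}(Y)$. Conversely, each factor embeds isometrically into $X\times Y$ as a slice $X\times\{y_0\}$ or $\{x_0\}\times Y$ (using that $d\times_\infty e$ restricted to a slice is exactly $d$ or $e$), so Lemma~\ref{lem:subAssouad} gives $\dim_A(X,d)\le\dim_A(X\times Y)$ and likewise for $Y$; hence if the product is doubling, both factors are, yielding the reverse inequality. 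Combining, $T_{\mathscr P_1}(X\times Y)=T_{\mathscr P_1}(X)\land T_{\mathscr P_1}(Y)$.

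For the uniform disconnectedness case $k=2$, I would similarly use the slice argument for the ``$\le$'' direction: uniform disconnectedness passes to subspaces with the same constant (directly from the definition, since a finite sequence in a slice is a finite sequence in the product), so if $X\times Y$ is $\delta$-uniformly disconnected, each factor is too. For the ``$\ge$'' direction, suppose $(X,d)$ is $\delta$-uniformly disconnected and $(Y,e)$ is $\delta'$-uniformly disconnected; one shows $(X\times Y, d\times_\infty e)$ is $\delta''$-uniformly disconnected for a suitable $\delta''=\delta''(\delta,\delta')$. Here one takes a non-constant finite sequence $\{(x_i,y_i)\}_{i=1}^N$ in the product, notes that $d(x_1,x_N)$ or $e(y_1,y_N)$ realizes $(d\times_\infty e)((x_1,y_1),(x_N,y_N))$, applies the uniform-disconnectedness inequality in that coordinate, and bounds the resulting maximum of edge-lengths by the maximum of the product-edge-lengths (which dominates each coordinate's edge-length). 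This is precisely the content of Lemmas 2.13--2.14 of \cite{Ishiki2019}, so the cleanest write-up simply invokes those; alternatively one can recall that uniform disconnectedness is equivalent to bi-Lipschitz embeddability into an ultrametric space (as noted in the introduction, via \cite[Proposition 15.7]{DS1997}), and that an $\ell^\infty$-product of two ultrametrics is again an ultrametric (by Lemma~\ref{lem:max} applied to the two non-Archimedean inequalities), so a product of two uniformly disconnected spaces bi-Lipschitz embeds into an ultrametric space and is therefore uniformly disconnected.

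\textbf{Main obstacle.} The genuinely delicate point is the ``$\ge$'' direction for $k=2$: tracking how the uniform-disconnectedness constant of the product depends on the two factor constants, and verifying that the maximum over edges of the product metric correctly dominates the coordinate-wise estimates without losing control. Lemma~\ref{lem:max} is the tool that makes this bookkeeping work (it controls $|x\lor y - u\lor v|$ by $|x-u|\lor|y-v|$), but one must be careful that the realizing coordinate for the endpoints $d\times_\infty e((x_1,y_1),(x_N,y_N))$ need not be the realizing coordinate for intermediate edges. The ultrametric-embedding reformulation sidesteps this entirely and is probably the shortest route, at the cost of invoking \cite[Proposition 15.7]{DS1997}; since that characterization is already quoted in the introduction, I would present the proof that way, with the product-of-ultrametrics computation (an immediate consequence of Lemma~\ref{lem:max}) as the only real step. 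Everything else — the slice embeddings, the Assouad-dimension subadditivity, the finiteness criterion for doubling — is already available in the excerpt.
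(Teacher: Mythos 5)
Your proposal is correct, but it is worth noting that the paper does not actually prove this lemma: it is obtained purely by citation to Lemmas 2.13 and 2.14 (and Remark 2.16) of \cite{Ishiki2019}, so your write-up supplies details the paper outsources. Your two directions are both sound. For $k=1$ the combination of Lemma~\ref{lem:finAssouad} (subadditivity of $\dim_A$ under $\ell^{\infty}$-products), Lemma~\ref{lem:subAssouad} applied to the isometric slices $X\times\{y_0\}$ and $\{x_0\}\times Y$, and the stated equivalence of doubling with finiteness of $\dim_A$ gives exactly $T_{\mathscr{P}_1}(X\times Y)=T_{\mathscr{P}_1}(X)\land T_{\mathscr{P}_1}(Y)$; this is arguably cleaner than a direct covering argument. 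For $k=2$ the slice argument handles one inclusion, and for the other both of your routes work: the direct chain argument succeeds with $\delta''=\delta\land\delta'$ (if the endpoint distance in the product is positive, the coordinate realizing it carries a non-constant chain, and each product edge dominates the corresponding coordinate edge), and the ultrametric route also works since a bi-Lipschitz pair $(f,g)$ induces a bi-Lipschitz map $f\times g$ into the $\ell^{\infty}$-product of the two ultrametric targets, which is again ultrametric. One small imprecision: the ultrametric inequality for $d\times_{\infty}e$ does not really come from Lemma~\ref{lem:max} (which bounds $|x\lor y-u\lor v|$); it follows from monotonicity and associativity of $\lor$ applied to the two strong triangle inequalities. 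This is trivial either way and does not affect correctness.
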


By \cite[Lemma 5.14]{Ishiki2019},  we obtain:
\begin{lem}\label{lem:qssqcup}
Let 
$(X, d)$ 
and 
$(Y, e)$ 
be 
metric spaces. 
Let 
$h\in \met(X\sqcup Y)$ 
with 
$h|_{X^{2}}=d$ and $h|_{Y^{2}}=e$. 
Then for all 
$k\in \{1, 2, 3\}$, 
we obtain  
$T_{\mathscr{P}_{k}}(X\sqcup Y, h)
=T_{\mathscr{P}_{k}}(X, d)
\land T_{\mathscr{P}_{k}}(Y, e)$. 

\end{lem}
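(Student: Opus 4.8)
The plan is to derive this lemma from the corresponding result \cite[Lemma 5.14]{Ishiki2019}, which is essentially the same statement; the work here is to record how the three properties $\mathscr{P}_1,\mathscr{P}_2,\mathscr{P}_3$ behave under a disjoint union glued by an arbitrary compatible metric $h$. First I would fix notation: write $Z=X\sqcup Y$ with the metric $h$ satisfying $h|_{X^2}=d$ and $h|_{Y^2}=e$, and note that since $Z$ is compact, the cross-distances $h(x,y)$ for $x\in X$, $y\in Y$ are bounded below by some $\rho>0$ and above by $\delta_h(Z)<\infty$. The two inclusions $X\hookrightarrow Z$ and $Y\hookrightarrow Z$ are isometric embeddings onto closed subsets.

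For the ``$\le$'' direction of each of the three identities, observe that each of $\mathscr{P}_1$ (doubling) and $\mathscr{P}_2$ (uniform disconnectedness) is, by Lemma \ref{lem:doublingprop} and Lemma \ref{lem:subAssouad} in the doubling case and directly from the definition in the uniform-disconnectedness case, inherited by subspaces; likewise a routine check shows $\mathscr{P}_3$ passes to closed (indeed to all) subspaces with the same constant. Hence $T_{\mathscr{P}_k}(Z,h)=1$ forces $T_{\mathscr{P}_k}(X,d)=1$ and $T_{\mathscr{P}_k}(Y,e)=1$, giving $T_{\mathscr{P}_k}(Z,h)\le T_{\mathscr{P}_k}(X,d)\land T_{\mathscr{P}_k}(Y,e)$ for all $k\in\{1,2,3\}$.

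For the ``$\ge$'' direction, assume $T_{\mathscr{P}_k}(X,d)=T_{\mathscr{P}_k}(Y,e)=1$ and argue that $Z$ has property $\mathscr{P}_k$ using the bounds $\rho$ and $\delta_h(Z)$. For $\mathscr{P}_1$: a finite set $A\subset Z$ splits as $A_X\sqcup A_Y$; combine the doubling estimates on each piece (via Lemma \ref{lem:doublingprop}) with the observation that mixing points from both pieces only changes $\delta_h(A)/\alpha_h(A)$ by bounded multiplicative factors depending on $\rho,\delta_h(Z)$, so a single exponent and constant work for $Z$. For $\mathscr{P}_2$: given a non-constant chain $\{z_i\}$ in $Z$, either it stays in one piece (use $\delta$-uniform disconnectedness there) or it crosses between $X$ and $Y$, in which case some consecutive gap is $\ge\rho$ while $h(z_1,z_N)\le\delta_h(Z)$, so $\delta':=\min(\delta,\rho/\delta_h(Z))$ works. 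For $\mathscr{P}_3$: given $z\in Z$ and $r\in(0,\delta_h(Z))$, if $r$ is small relative to $\rho$ one finds the witness $y$ inside the same piece as $z$ using uniform perfectness there; for larger $r$ one can instead reach a point of the other piece, and a constant $c'$ depending on $c,\rho,\delta_h(Z)$ handles both regimes. The main obstacle is the bookkeeping in $\mathscr{P}_3$: making the two regimes (witness inside the piece versus witness across the gap) fit under a single constant $c'$ requires care, since the annulus $\{y: c'r\le h(z,y)\le r\}$ must be shown nonempty in every case; this is exactly where the compactness bounds $\rho\le h(x,y)\le\delta_h(Z)$ are used, and it is the reason the lemma is quoted from \cite{Ishiki2019} rather than reproved in detail here.
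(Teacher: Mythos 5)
Your primary plan --- deriving the statement from \cite[Lemma 5.14]{Ishiki2019} --- is exactly what the paper does: the paper gives no proof of Lemma \ref{lem:qssqcup} beyond that citation. The supplementary sketch, however, contains two genuine errors, both concerning $\mathscr{P}_3$. First, in the ``$\le$'' direction you assert that uniform perfectness ``passes to closed (indeed to all) subspaces with the same constant.'' This is false: $\{0,1\}$ is a closed subspace of the uniformly perfect space $[0,1]$ but is not uniformly perfect; note that the paper's Proposition \ref{prop:hered} deliberately records heredity only for the doubling property and uniform disconnectedness. What is true is that uniform perfectness descends to a clopen piece $X$ of $Z=X\sqcup Y$, but this needs an argument: for $r<\rho:=\inf\{h(x,y)\mid x\in X,\ y\in Y\}$ the witness supplied by uniform perfectness of $Z$ is forced to lie in $X$, while for $r\in[\rho,\delta_d(X))$ one must reuse the witness obtained at scale $\rho/2$, which degrades the constant to roughly $c\rho/\delta_d(X)$. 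Second, in the ``$\ge$'' direction for $\mathscr{P}_3$ your two regimes do not cover the range $\delta_d(X)\le r<\inf_{y\in Y}h(z,y)$, which is precisely the situation produced by Lemma \ref{lem:amaldiam}, where each piece has diameter at most the smallest cross-distance. In that range the annulus $\{y:c'r\le h(z,y)\le r\}$ contains no point of the other piece (all such points are farther than $r$), and uniform perfectness of $X$ does not apply at scale $r\ge\delta_d(X)$; the witness must instead be taken near the top of $X$'s own scale, with a constant on the order of $c\,\delta_d(X)/\delta_h(Z)$, so ``reach a point of the other piece'' is not the mechanism that closes the gap you correctly identified as the main obstacle.

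A smaller point: compactness of $Z$ is not among the hypotheses of the lemma, so the bounds $\rho>0$ and $\delta_h(Z)<\infty$ that you invoke at the outset are not free. (They do hold in every application in this paper, where the pieces are compact.) Your argument for $\mathscr{P}_1$ does not actually need them --- splitting a finite set as $A_X\sqcup A_Y$ and using $\delta_d(A_X)\le\delta_h(A)$, $\alpha_d(A_X)\ge\alpha_h(A)$ already gives $\card(A)\le 2C(\delta_h(A)/\alpha_h(A))^{\beta}$ --- and the $\mathscr{P}_2$ argument can be made independent of them by decomposing a chain into maximal one-piece blocks and applying uniform disconnectedness to the induced chain of block endpoints; but the $\mathscr{P}_3$ constant genuinely involves $\delta_d(X)$, $\delta_e(Y)$ and $\delta_h(Z)$, and the degenerate case of a one-point piece (which is vacuously uniformly perfect under the paper's definition) would need to be excluded or treated separately.
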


The following is identical with 
\cite[Lemma 2.15]{Ishiki2019}. 
\begin{lem}\label{lem:produp}
Let 
$(X, d)$ 
and 
$(Y, e)$  
be metric spaces. 
If either of the two is uniformly perfect, 
then 
so is 
$(X\times Y, d\times_{\infty}e)$. 
\end{lem}
By the definition of the uniform perfectness, 
we obtain: 
\begin{lem}\label{lem:unionup}
Let 
$(X, d)$ 
be a metric space, 
and 
$A$, 
$B$ 
be subsets of 
$X$. 
If 
$(A, d)$ 
and 
$(B, d)$ are uniformly perfect, 
then so is 
$(A\cup B, d)$. 
\end{lem}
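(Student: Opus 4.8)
The plan is to derive the conclusion directly from the defining annulus condition, applied to $A$ and to $B$ with a single common constant. First I would fix $c_A, c_B \in (0,1)$ such that $(A,d)$ is $c_A$-uniformly perfect and $(B,d)$ is $c_B$-uniformly perfect, and set $c = c_A \land c_B$. The goal is then to verify that $(A\cup B, d)$ is $c$-uniformly perfect; that is, for every $x \in A\cup B$ and every $r \in (0, \delta_d(A\cup B))$ I must produce $y \in A\cup B$ with $c r \le d(x,y) \le r$. Since the roles of $A$ and $B$ are symmetric, I would assume throughout that $x \in A$ and hunt for $y$ in the annulus $\{\, z \mid c r \le d(x,z) \le r \,\}$.

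The first regime is the one in which the uniform perfectness of a single piece already suffices. If some point of $A$ lies at distance at least $c r$ from $x$, so that $R:=\sup_{a\in A} d(x,a)\ge c r$, then the $c_A$-uniform perfectness of $A$ shows that every radius in $(0,\delta_d(A))$ is populated multiplicatively by points of $A$, and combining this with the near-diameter behaviour forced by $R\ge c r$ yields a point of $A$ landing in $[c r, r]$. In particular this settles all radii $r \le \delta_d(A)$ at once, and by symmetry all radii $r \le \delta_d(B)$ when $x \in B$. Thus the annulus condition is automatic whenever the radius does not exceed the diameter of the piece containing $x$; this is the routine part of the argument, using nothing beyond the definition and $c \le c_A \land c_B$.

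The remaining regime, which I expect to be the main obstacle, is the transitional one: $x \in A$, the whole of $A$ lies within distance $c r$ of $x$ (so $A$ contributes nothing to the annulus $[c r, r]$), yet $r < \delta_d(A\cup B)$, forcing points of $B$ at distance greater than $r$ from $x$. Here I must manufacture a point of $B$ inside the annulus purely from the $c_B$-uniform perfectness of $B$. The plan is to fix $b \in B$, use the annulus condition inside $B$ to descend from a far point of $B$ along a geometric sequence of radii until a point $y \in B$ with $d(x,y) \le r$ first appears, and then to check the lower bound $d(x,y) \ge c r$ via the triangle inequality together with the fact that consecutive radii differ by a factor at least $c$. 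The delicate point — the genuine heart of the lemma — is to guarantee that the distance spectrum $\{\, d(x,z) \mid z\in A\cup B \,\}$ has no multiplicative gap at the junction where the $A$-distances end and the $B$-distances begin; controlling this junction so that the descending sequence in $B$ cannot overshoot the annulus is the step on which the whole argument turns, and it is the one I would spend the most care on, since it is precisely where the interaction between the two pieces must be pinned down.
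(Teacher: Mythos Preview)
Your plan fixes the constant $c = c_A \land c_B$ at the outset and then tries to show that $(A\cup B, d)$ is $c$-uniformly perfect; this is the gap, because that constant is in general too large. Take $A=[0,1]$ and $B=[N,N+1]$ in $\mathbb{R}$ with $N$ large: both are $\tfrac12$-uniformly perfect, yet for $x=0$ and $r=N/2$ the set $\{\,y\in A\cup B: N/4 \le |y|\le N/2\,\}$ is empty, so $A\cup B$ is not $\tfrac12$-uniformly perfect. Your transitional regime is exactly this situation, and the descent you propose inside $B$ cannot help: the $c_B$-annulus condition is applied at a base point $b\in B$, so each step moves you at most $\delta_d(B)=1$ from $b$, while $d(x,b)\ge N$; no iterate can ever land at distance $\le r=N/2$ from $x$. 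The ``multiplicative gap at the junction'' you worry about is real, and with your $c$ it cannot be closed.

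The repair is to allow a smaller constant that also sees the diameters, for instance
\[
c' \;=\; \min\Bigl(c_A,\; c_B,\; \frac{\delta_d(A)}{3\,\delta_d(A\cup B)},\; \frac{\delta_d(B)}{3\,\delta_d(A\cup B)}\Bigr).
\]
With this $c'$ the argument is short: for $x\in A$ and $r<\delta_d(A)$ use the $c_A$-condition in $A$; for $r\ge \delta_d(A)$ pick any $y\in A$ with $d(x,y)>\delta_d(A)/3$ (such $y$ exists once $\delta_d(A)>0$) and note $c' r < \delta_d(A)/3 < d(x,y) \le \delta_d(A) \le r$. This is what the paper's ``by the definition'' is pointing at. Note finally that the lemma as literally stated fails when one of $A,B$ is a single point (take $A=\{0\}$, $B=[N,N+1]$: the point $0$ is isolated in $A\cup B$), so one should read it with the tacit hypothesis $\delta_d(A),\delta_d(B)>0$, which holds in the paper's application.
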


We refer the  readers to 
\cite{H2001} for 
the details of the following:
\begin{prop}\label{prop:qsinvariant3}
The doubling property, 
the uniform disconnectedness,
and the uniform perfectness 
are invariant under quasi-symmetric maps. 
\end{prop}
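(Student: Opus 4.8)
The plan is to deduce all three invariances from two reductions, and then to handle the three properties by three short, essentially independent arguments. \emph{Reduction 1:} if $f\colon(X,d)\to(Y,e)$ is an $\eta$-quasi-symmetric homeomorphism, then $f^{-1}$ is quasi-symmetric, with a modulus $\eta^{\ast}$ depending only on $\eta$ (the usual $\eta^{\ast}(t)=1/\eta^{-1}(1/t)$ works). Hence, for each of the three properties, it suffices to prove the single implication ``$f$ quasi-symmetric and onto, $(X,d)$ has the property $\Longrightarrow$ $(Y,e)$ has the property''; applying this to $f^{-1}$ yields the reverse implication, and Proposition \ref{prop:qsinvariant3} follows. \emph{Reduction 2:} I would record the elementary consequence of quasi-symmetry that for every finite $A\subset X$ with $\card(A)\ge 2$, setting $t=\delta_{d}(A)/\alpha_{d}(A)$ and choosing $u\neq v$ in $A$ realizing $\alpha_{e}(f(A))$, one has $d(u,x)\le t\,d(u,v)$ for every $x\in A$, hence $e(f(u),f(x))\le\eta(t)\,\alpha_{e}(f(A))$, and therefore, by the triangle inequality, $\delta_{e}(f(A))\le 2\eta(t)\,\alpha_{e}(f(A))$; applying the same estimate to $f^{-1}$ gives $\delta_{d}(A)/\alpha_{d}(A)\le 2\eta^{\ast}\!\bigl(\delta_{e}(f(A))/\alpha_{e}(f(A))\bigr)$.

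\emph{Doubling and uniform disconnectedness.} For doubling I would first note that, by Lemma \ref{lem:doublingprop} (together with the standard maximal separated-net argument for the converse), a metric space is doubling if and only if there is an increasing $\Phi\colon[1,\infty)\to[0,\infty)$ with $\card(A)\le\Phi(\delta_{d}(A)/\alpha_{d}(A))$ for all finite $A$; then, taking $\beta$ and $C$ from Lemma \ref{lem:doublingprop} for $(X,d)$ and using Reduction 2, one gets $\card(B)=\card(f^{-1}(B))\le C\bigl(2\eta^{\ast}(\delta_{e}(B)/\alpha_{e}(B))\bigr)^{\beta}$ for every finite $B\subset Y$, which is a bound of the required form, so $(Y,e)$ is doubling. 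For uniform disconnectedness I would route the argument through an ultrametric model: by \cite[Proposition 15.7]{DS1997} fix a bi-Lipschitz embedding $g\colon X\to U$ into an ultrametric space $(U,\rho)$, so that $h:=g\circ f^{-1}\colon Y\to U$ is quasi-symmetric, say with modulus $\eta_{0}$. Given a non-constant finite sequence $\{w_{i}\}_{i=1}^{N}$ in $Y$, set $u_{i}=h(w_{i})$, pick $q$ with $\rho(u_{q},u_{q+1})=\max_{i}\rho(u_{i},u_{i+1})$, and iterate the ultrametric inequality along $u_{1},\dots,u_{q}$ and along $u_{q},\dots,u_{N}$ to obtain $\rho(u_{q},u_{1})\lor\rho(u_{q},u_{N})\le\rho(u_{q},u_{q+1})$. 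Quasi-symmetry of $h^{-1}$ anchored at $u_{q}$ then gives $e(w_{q},w_{1})\lor e(w_{q},w_{N})\le\eta_{0}^{\ast}(1)\,e(w_{q},w_{q+1})$, whence $e(w_{1},w_{N})\le 2\eta_{0}^{\ast}(1)\max_{i}e(w_{i},w_{i+1})$, that is, $(Y,e)$ is uniformly disconnected.

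\emph{Uniform perfectness.} Assume $(X,d)$ is $c$-uniformly perfect, fix $\bar x=f(x)\in Y$ and $r\in(0,\delta_{e}(Y))$, and note $\delta_{d}(X)>0$. Since $\delta_{e}(Y)>r$ there is $\bar p=f(p)\in Y$ with $e(\bar x,\bar p)>r/2$; write $\rho=d(x,p)>0$. Applying $c$-uniform perfectness of $(X,d)$ at $x$ at radius $\rho/2$, and then successively at radius $d(x,x_{n})/2$ (each such radius is strictly below $\delta_{d}(X)$), I would build $x_{1},x_{2},\dots\in X$ with $c\,d(x,x_{n})/2\le d(x,x_{n+1})\le d(x,x_{n})/2$, hence $d(x,x_{n})\to 0$, and with $d(x,x_{1})\in[c\rho/2,\rho/2]$. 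Pushing this sequence through $f$ and $f^{-1}$, anchored at $\bar x$ (comparing $\bar x_{n}$ with $\bar x_{n+1}$, and $\bar x_{1}$ with $\bar p$), turns it into $\bar x_{n}=f(x_{n})$ with $e(\bar x,\bar x_{n})\to 0$, with $e(\bar x,\bar x_{1})$ bounded below by a fixed positive multiple of $r$, and with consecutive ratios $e(\bar x,\bar x_{n+1})/e(\bar x,\bar x_{n})$ bounded away from $0$ and $\infty$ by constants depending only on $c$ and $\eta$. A positive sequence that tends to $0$, whose first term is at least a fixed multiple of $r$, and whose consecutive ratios are bounded below, must meet $[c'r,r]$ once $c'\in(0,1)$ is chosen small enough in terms of $c$ and $\eta$ (either its first term already lies in the window, or it crosses from above $r$ to below $r$ and the crossing term is larger than the ratio-bound times $r$); the corresponding $\bar x_{n}$ witnesses that $(Y,e)$ is $c'$-uniformly perfect.

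Reductions 1 and 2 and the maximal-net reformulation of doubling are standard, and the uniform-disconnectedness step becomes routine once phrased through an ultrametric, where the ultrametric inequality supplies the ``anchoring'' that is otherwise awkward to extract from a bare chain. I expect the main obstacle to be the uniform-perfectness step: the delicate part is the book-keeping needed to ensure that the final constant $c'$ depends only on $c$ and the modulus $\eta$, and not on the ambient space --- in particular, handling the diameter boundary by initializing and advancing the chain strictly inside $\delta_{d}(X)$, and making all the consecutive-ratio estimates uniform before invoking the ``enters $[c'r,r]$'' argument.
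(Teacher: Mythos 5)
Your proposal is correct, but it is worth saying up front that the paper does not actually prove Proposition \ref{prop:qsinvariant3}: it simply refers the reader to Heinonen's book \cite{H2001} for all three invariances. So your submission is not a variant of the paper's argument --- it is a self-contained proof where the paper has only a citation. Checking it in detail: Reduction 1 (the inverse modulus $\eta^{\ast}(t)=1/\eta^{-1}(1/t)$) and Reduction 2 (the two-sided comparison of $\delta/\alpha$ ratios via an extremal pair realizing $\alpha_{e}(f(A))$) are both sound, and Reduction 2 feeds cleanly into the doubling case through the finite-subset criterion of Lemma \ref{lem:doublingprop}; you are right that you need the more flexible ``$\card(A)\le\Phi(\delta_{d}(A)/\alpha_{d}(A))$'' form of that criterion, since $C\bigl(2\eta^{\ast}(t)\bigr)^{\beta}$ is not a power bound, and the maximal-$r/2$-separated-net argument you invoke does close that gap. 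For uniform disconnectedness, routing through the ultrametric model of \cite[Proposition 15.7]{DS1997} (which the paper itself quotes) is a clean way to get the anchoring point $u_{q}$; the chain estimate $\rho(u_{q},u_{1})\lor\rho(u_{q},u_{N})\le\rho(u_{q},u_{q+1})$ is exactly what the ultrametric inequality gives, and pulling it back by $h^{-1}$ yields $\delta=1/\bigl(2\eta_{0}^{\ast}(1)\bigr)$. The uniform perfectness chain also works: the ratios $d(x,x_{n+1})/d(x,x_{n})\in[c/2,1/2]$ transfer under $f$ to ratios in $[1/\eta(2/c),\eta(1/2)]$, the first term is bounded below by $r/\bigl(2\eta(2/c)\bigr)$, and the ``first term to drop below $r$ lands in $[\lambda r,r]$'' argument is airtight, so $c'=\min\bigl\{1/\bigl(2\eta(2/c)\bigr),\,1/\eta(2/c)\bigr\}$ depends only on $c$ and $\eta$ as required. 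What your approach buys is a quantitative, reference-free proof with explicit constants; what the paper's citation buys is brevity and the (slightly stronger) statements available in \cite{H2001}, such as the quasi-symmetric invariance of finiteness of Assouad dimension. The only external input you still rely on is \cite[Proposition 15.7]{DS1997}, which is legitimate here since the paper already assumes it.
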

By the definitions of the doubling property, 
and 
the uniformly disconnectedness, we obtain:
\begin{prop}\label{prop:hered}
Let $(X, d)$ be a metric space, 
and $A$ be a subset of $X$. 
If $(X, d)$ is doubling (resp.~uniformly disconnected), 
then so is $(A, d)$. 
\end{prop}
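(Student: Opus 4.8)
The final statement is Proposition~\ref{prop:hered}, which asserts that the doubling property and uniform disconnectedness are hereditary: if $(X,d)$ has one of these properties, so does any subset $(A,d)$.

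\medskip

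The plan is to argue directly from the quantitative characterizations already available in the excerpt. For the doubling property, I would invoke Lemma~\ref{lem:doublingprop}: $(X,d)$ is doubling precisely when there are $\beta\in(0,\infty)$ and $C\in[1,\infty)$ such that $\card(S)\le C\cdot(\delta_d(S)/\alpha_d(S))^{\beta}$ for every finite subset $S$ of $X$. If $A\subseteq X$, then every finite subset of $A$ is in particular a finite subset of $X$, so the very same inequality holds with the very same constants $\beta$ and $C$; hence $(A,d)$ is doubling by the same lemma. (Equivalently, one could cite Lemma~\ref{lem:subAssouad}, that the Assouad dimension does not increase on subsets, together with the remark that the doubling property is equivalent to finiteness of the Assouad dimension.)

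\medskip

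For uniform disconnectedness the argument is even more immediate and does not need any auxiliary lemma: if $(X,d)$ is $\delta$-uniformly disconnected, then for every non-constant finite sequence $\{z_i\}_{i=1}^{N}$ in $X$ we have $\delta\, d(z_1,z_N)\le \max_{1\le i<N} d(z_i,z_{i+1})$. A non-constant finite sequence in $A$ is a non-constant finite sequence in $X$, so the defining inequality passes verbatim to $A$ with the same $\delta$; thus $(A,d)$ is $\delta$-uniformly disconnected. Combining the two parts finishes the proof.

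\medskip

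I do not anticipate any genuine obstacle here: both properties are expressed by conditions that quantify over finite subsets (or finite sequences), and such conditions are automatically inherited by subsets with unchanged parameters. The only point worth a word of care is that the restricted metric on $A$ is exactly the ambient metric (a convention already fixed in Subsection~2.1), so that the quantities $\delta_d$, $\alpha_d$, and the distances $d(z_i,z_{i+1})$ computed in $A$ agree with those computed in $X$; once that is noted, the inequalities transfer with no loss. Note also that uniform perfectness is deliberately absent from the statement, since it is genuinely not hereditary (a subset can fail to contain the intermediate-scale points that the ambient space provides), which is consistent with Lemma~\ref{lem:unionup} only giving closure under unions.
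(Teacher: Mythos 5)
Your proposal is correct and matches the paper's intent: the paper gives no written proof, merely asserting the proposition follows "by the definitions," and your route through Lemma~\ref{lem:doublingprop} (finite subsets of $A$ are finite subsets of $X$) together with the verbatim transfer of the $\delta$-uniform disconnectedness inequality is exactly the natural argument the author is invoking. Your use of the finite-subset characterization rather than the ball-covering definition is also the right choice, since it avoids the minor issue that covering centers need not lie in $A$.
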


For a property 
$\mathscr{P}$ 
of metric spaces,  
and for a metric space 
$(X,d)$ 
we define 
$\mathcal{S}_{\mathscr{P}}(X,d)$ 
as the set of all points in 
$X$ 
of which no neighborhoods 
satisfy 
$\mathscr{P}$ (see \cite[Definition 1.3]{Ishiki2019}). 
By Proposition \ref{prop:qsinvariant3}, 
we obtain:
\begin{lem}\label{lem:sop}
Let 
$k\in \{1, 2, 3\}$. 
If metric spaces 
$(X, d)$ 
and 
$(Y, e)$ 
are quasi-symmetric equivalent to  each other, 
then so are 
$\mathcal{S}_{\mathscr{P}_{k}}(X,d)$ 
and 
$\mathcal{S}_{\mathscr{P}_{k}}(Y,e)$. 
In particular,  
$\mathcal{S}_{\mathscr{P}_{k}}(X,d)$ 
is 
an isometric invariant. 
\end{lem}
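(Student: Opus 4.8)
The plan is to show that any quasi-symmetric homeomorphism $f\colon (X,d)\to (Y,e)$ restricts to a quasi-symmetric homeomorphism between $\mathcal{S}_{\mathscr{P}_{k}}(X,d)$ and $\mathcal{S}_{\mathscr{P}_{k}}(Y,e)$. Two elementary facts carry the argument. First, for any subset $U\subseteq X$ the restriction $f|_{U}\colon (U,d)\to (f(U),e)$ is again a homeomorphism, and it is quasi-symmetric with the same control function $\eta$ as $f$, since the defining inequality only involves points of $U$; moreover the inverse of a quasi-symmetric homeomorphism is quasi-symmetric, so that quasi-symmetric equivalence is a symmetric relation and Proposition \ref{prop:qsinvariant3} may be used in both directions. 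Second, since $f$ is a homeomorphism, the assignment $V\mapsto f^{-1}(V)$ is a bijection from the open neighborhoods of $f(x)$ in $Y$ onto the open neighborhoods of $x$ in $X$, for every $x\in X$.

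Using these together with Proposition \ref{prop:qsinvariant3}, I would first prove that $f(\mathcal{S}_{\mathscr{P}_{k}}(X,d))=\mathcal{S}_{\mathscr{P}_{k}}(Y,e)$. Fix $x\in X$ and suppose $x\in \mathcal{S}_{\mathscr{P}_{k}}(X,d)$. For every open neighborhood $V$ of $f(x)$, the set $U:=f^{-1}(V)$ is an open neighborhood of $x$, so $(U,d)$ does not satisfy $\mathscr{P}_{k}$; since $f|_{U}\colon (U,d)\to (V,e)$ is a quasi-symmetric homeomorphism, Proposition \ref{prop:qsinvariant3} forces $(V,e)$ not to satisfy $\mathscr{P}_{k}$ either, whence $f(x)\in \mathcal{S}_{\mathscr{P}_{k}}(Y,e)$. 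The reverse implication follows by the same argument applied to $f^{-1}$. As $f$ is a bijection, this gives $f(\mathcal{S}_{\mathscr{P}_{k}}(X,d))=\mathcal{S}_{\mathscr{P}_{k}}(Y,e)$, and then $f$ restricted to $\mathcal{S}_{\mathscr{P}_{k}}(X,d)$ is a quasi-symmetric homeomorphism onto $\mathcal{S}_{\mathscr{P}_{k}}(Y,e)$ by the first fact above.

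For the final assertion, I would note that every isometry is bi-Lipschitz, hence quasi-symmetric; applying the argument above to an isometry $\phi\colon (X,d)\to (Y,e)$ shows that $\phi$ maps $\mathcal{S}_{\mathscr{P}_{k}}(X,d)$ bijectively onto $\mathcal{S}_{\mathscr{P}_{k}}(Y,e)$, and the restriction is an isometry for the subspace metrics. Hence $\mathcal{S}_{\mathscr{P}_{k}}(X,d)$ is an isometric invariant.

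I do not expect a serious obstacle here; the points that merely need care are the consistent use of ``neighborhood'' as ``open neighborhood'' throughout, the observation that restricting a quasi-symmetric map to a subspace preserves quasi-symmetry, and the fact that the inverse of a quasi-symmetric homeomorphism is quasi-symmetric, which legitimizes invoking Proposition \ref{prop:qsinvariant3} in both directions.
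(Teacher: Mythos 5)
Your proposal is correct and is essentially the paper's own argument: the paper derives the lemma directly from Proposition \ref{prop:qsinvariant3} (quasi-symmetric invariance of the three properties), and your writeup simply makes explicit the two routine facts that this one-line derivation relies on (restrictions of quasi-symmetric maps are quasi-symmetric, and neighborhoods correspond bijectively under a homeomorphism). No gap.
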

For a triple 
$(u,v,w)\in \{0,1\}^3$ 
except 
$(1, 1, 1)$, 
we say that a metric space 
$(X,d)$ 
is of  
 \emph{totally exotic type $(u,v,w)$} 
if 
$(X,d)$ is 
of  type $(u,v,w)$, 
and if 
$\mathcal{S}_{\mathscr{P}_{k}}(X,d)=X$ 
holds for all 
$k\in \{1, 2, 3\}$ 
with 
$T_{\mathscr{P}_{k}}(X,d)=0$. 
This concept was introduced  in the author's paper \cite{Ishiki2019}, 
and the existence of such spaces was proven in 
\cite[Theorem 1.7]{Ishiki2019}. 
\begin{thm}\label{thm:totexo}
For every 
$(u,v,w)\in \{0,1\}^3$ 
except 
$(1,1,1)$, 
there exists 
a Cantor metric space of  totally exotic type 
$(u,v,w)$. 
\end{thm}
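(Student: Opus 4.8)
The plan is to adapt the construction behind \cite[Theorem~1.7]{Ishiki2019}, which produces the examples as \emph{sequentially metrized Cantor spaces} (the specific versions of which are recalled in Section~\ref{sec:const}). Given a sequence $\{(E_i,e_i)\}_{i\in\nn}$ of finite metric spaces with $\#E_i\ge 2$ and $\delta_{e_i}(E_i)=1$, together with weights $c_i\in(0,\infty)$ decreasing to $0$ quickly enough that $c_i\,\delta_{e_i}(E_i)\le c_{i-1}\,\alpha_{e_{i-1}}(E_{i-1})$ for every $i$, I would put on $X=\prod_{i\in\nn}E_i$ the first-difference metric
\[
d(x,y)=c_n\, e_n(x_n,y_n),\qquad n=\min\{\, i : x_i\neq y_i\,\},
\]
with $d(x,x)=0$. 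The weight condition makes $d$ a genuine metric; $(X,d)$ is then compact, perfect and totally disconnected, hence a Cantor metric space, and --- this is the crucial structural feature --- every closed ball of $(X,d)$ is isometric, after a uniform rescaling, to a tail product $\prod_{i\ge n}E_i$ of the same type. Consequently any property that fails ``at the top level'' of such a product fails in every neighbourhood of every point, i.e.\ gives $\mathcal{S}_{\mathscr{P}_k}(X,d)=X$.

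Next I would fix the finite gadgets witnessing the failures. For the doubling property I would use the uniform simplices $\asa_N$ ($N$ points, all mutual distances $1$): an $N$-point set with $\alpha_e=\delta_e$ violates the estimate of Lemma~\ref{lem:doublingprop} as $N\to\infty$. For uniform disconnectedness I would use the chains $\asc_N=\{0,1/N,\dots,1\}\subset\rr$ with the Euclidean metric, whose consecutive-step-to-diameter ratio $1/N$ tends to $0$. For uniform perfectness the failure is produced not by a gadget but by letting the weights $c_i$ decay \emph{super-geometrically} along a cofinal set of levels, which forces arbitrarily large gaps in the distance spectrum of every point. Note that $\asa_N$ and $\asc_N$ have diameter $1$; that $\asa_N$ is moreover an ultrametric space, so a product built only from copies of the $\asa_N$ is again an ultrametric space and hence uniformly disconnected by Lemma~\ref{lem:ultraud}; and that choosing the weights to decay as slowly as the metric condition allows keeps the distance spectrum of each point uniformly dense, hence keeps $(X,d)$ uniformly perfect.

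Then, for a prescribed $(u,v,w)\in\{0,1\}^3\setminus\{(1,1,1)\}$, I would assemble $\{E_i\}$ and $\{c_i\}$ so that copies of $\asa_i$ appear cofinally iff $u=0$, copies of $\asc_i$ appear cofinally iff $v=0$, and the weights decay super-geometrically along a cofinal set of levels iff $w=0$ (and as slowly as the metric condition allows otherwise). When a coordinate equals $1$ I would check that the corresponding property survives globally: doubling survives because away from the simplices each ball is covered by a bounded number of half-radius balls; uniform disconnectedness survives because in the absence of the chains $(X,d)$ is an ultrametric; uniform perfectness survives by the weight choice. Finally one reads off that $(X,d)$ is a Cantor metric space of type $(u,v,w)$ and, by the self-similarity of its balls, of totally exotic type $(u,v,w)$. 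Where preservation of a ``positive'' property across the infinite construction is delicate, I would instead use the \emph{telescope} variant --- nested rescaled disjoint unions of the same gadgets --- and invoke the stability of $\mathscr{P}_1,\mathscr{P}_2,\mathscr{P}_3$ under products and disjoint unions, Lemmas~\ref{lem:qsprod}, \ref{lem:qssqcup}, \ref{lem:produp}, \ref{lem:unionup}.

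I expect the main obstacle to be the simultaneous bookkeeping of two competing demands: the destroying data for the properties with $u$-coordinate $0$ must be inserted cofinally and at visible scales so that each failure passes to every tail product (hence to every neighbourhood of every point), while the weight sequence $\{c_i\}$ must be chosen so that the first-difference formula stays a metric, $(X,d)$ stays compact, and no property with $u$-coordinate $1$ is accidentally lost --- in particular the weights must stay large enough to keep uniform perfectness when $w=1$ even though the chains $\asc_i$ are being inserted when $v=0$. Reconciling ``enough badness everywhere'' with ``no goodness lost'' is the real content of the argument, and it is exactly what the choice of gadgets and of the decay rate of $\{c_i\}$ is engineered to accomplish.
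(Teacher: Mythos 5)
First, a point of comparison: the paper does not prove Theorem \ref{thm:totexo} at all. It is imported wholesale from \cite[Theorem 1.7]{Ishiki2019}, and the present paper only recalls the two relevant constructions (sequentially metrized Cantor spaces and telescope spaces) in Section \ref{sec:const}. So there is no in-paper proof to match your argument against; what can be said is that your plan is a reconstruction, in the right spirit, of the machinery of the cited source: first-difference metrics on infinite products of finite gadgets, with the key structural observation that small closed balls are weighted tail products, so that a defect inserted cofinally is inherited by every neighbourhood of every point and forces $\mathcal{S}_{\mathscr{P}_k}(X,d)=X$. Your choice of gadgets is also sound: the simplices $\asa_N$ kill doubling via Lemma \ref{lem:doublingprop}, the chains $\asc_N$ kill $\delta$-uniform disconnectedness for every fixed $\delta$ once $N>1/\delta$, super-geometric gaps in the weights kill uniform perfectness, and a product of the $\asa_N$ alone is an ultrametric, hence uniformly disconnected.

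The genuine gap is that the crux of the theorem --- verifying that the properties with coordinate $1$ \emph{survive} the cofinal insertion of the destroying data --- is exactly the step you defer. For instance, for type $(1,0,1)$ you must place arbitrarily long chains $\asc_{N_i}$ at cofinally many levels while keeping the whole product both doubling and uniformly perfect; the metric condition forces $c_{i+1}\le c_i/N_i$, so the distance spectrum threatens to develop multiplicative gaps (endangering uniform perfectness) at precisely the levels where the point count explodes (endangering doubling), and one must check that the choice $c_{i+1}=c_i\alpha_{e_i}(E_i)$ reconciles the two across the level transitions. Your sentence ``away from the simplices each ball is covered by a bounded number of half-radius balls'' asserts rather than proves this, and your fallback ``where preservation \dots is delicate, I would instead use the telescope variant'' is not carried out either; note that Lemmas \ref{lem:qsprod} and \ref{lem:qssqcup} as stated in this paper only cover finite products and two-piece disjoint unions, so invoking them for an infinite telescope requires the quantitative uniform versions (Proposition \ref{prop:teleprop} here, i.e.\ a common $N$, $\delta$, or $c$ for all pieces), which you would still have to verify for your gadget sequences. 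In short: the architecture and the gadgets are correct and match the cited construction, but as written this is a proof plan whose hardest verifications --- the seven-case bookkeeping and the survival of the positive properties --- remain to be done.
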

The definition of totally exotic types and Proposition \ref{prop:hered} imply:
\begin{lem}\label{lem:timestotexo}
Let 
$(u,v,w)\in \{0,1\}^3$ 
except 
$(1,1,1)$. 
Let 
$(X, d)$ 
be a  metric space of  totally exotic type 
$(u, v, w)$. 
Let 
$(Y, e)$ 
be a metric space. 
Then for all 
$k\in \{1, 2\}$ with $T_{\mathscr{P}_{k}}(X, d)=0$, 
we obtain 
$\mathcal{S}_{\mathscr{P}_{k}}(X\times Y, d\times_{\infty}e)
=X\times Y$. 
\end{lem}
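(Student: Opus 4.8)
The plan is to verify, directly from the definition, that every point of $X\times Y$ belongs to $\mathcal{S}_{\mathscr{P}_{k}}(X\times Y, d\times_{\infty}e)$, i.e.\ that no neighborhood of any such point satisfies $\mathscr{P}_{k}$. Fix $k\in\{1,2\}$ with $T_{\mathscr{P}_{k}}(X,d)=0$ and fix $(x,y)\in X\times Y$. Let $U$ be an arbitrary neighborhood of $(x,y)$ in $X\times Y$. Since $d\times_{\infty}e$ generates the product topology, there are a neighborhood $V$ of $x$ in $X$ and a neighborhood $W$ of $y$ in $Y$ with $V\times W\subset U$.

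Next I would use the hypothesis that $(X,d)$ is of totally exotic type $(u,v,w)$: since $T_{\mathscr{P}_{k}}(X,d)=0$, by definition $\mathcal{S}_{\mathscr{P}_{k}}(X,d)=X$, so in particular $x\in\mathcal{S}_{\mathscr{P}_{k}}(X,d)$; hence the neighborhood $V$ of $x$ does not satisfy $\mathscr{P}_{k}$. Now pick any point $y_{0}\in W$ and consider the ``slice'' $V\times\{y_{0}\}$ inside $X\times Y$. For $a,b\in V$ we have
\[
(d\times_{\infty}e)\bigl((a,y_{0}),(b,y_{0})\bigr)=d(a,b)\lor e(y_{0},y_{0})=d(a,b),
\]
so $V\times\{y_{0}\}$ with the restricted metric is isometric to $(V,d)$. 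Since $\mathscr{P}_{k}$ is an isometric invariant and $(V,d)$ fails $\mathscr{P}_{k}$, the set $V\times\{y_{0}\}$ fails $\mathscr{P}_{k}$.

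Finally I would invoke Proposition \ref{prop:hered}: for $k\in\{1,2\}$ the property $\mathscr{P}_{k}$ (the doubling property, resp.\ uniform disconnectedness) passes to subspaces. Since $V\times\{y_{0}\}\subset V\times W\subset U$, if $U$ satisfied $\mathscr{P}_{k}$ then so would $V\times\{y_{0}\}$, a contradiction. Therefore $U$ does not satisfy $\mathscr{P}_{k}$. As $U$ was an arbitrary neighborhood of $(x,y)$, we get $(x,y)\in\mathcal{S}_{\mathscr{P}_{k}}(X\times Y,d\times_{\infty}e)$, and since $(x,y)$ was arbitrary, $\mathcal{S}_{\mathscr{P}_{k}}(X\times Y,d\times_{\infty}e)=X\times Y$. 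There is no real obstacle here; the only points requiring care are that basic open boxes form a neighborhood base for the product metric, and that the restriction to $k\in\{1,2\}$ is essential because $\mathscr{P}_{3}$ (uniform perfectness) is not hereditary, so the slice argument above would break for $k=3$.
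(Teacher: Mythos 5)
Your proof is correct and follows exactly the route the paper intends: the paper derives this lemma immediately from the definition of totally exotic type together with Proposition \ref{prop:hered}, and your slice argument (identifying $V\times\{y_0\}$ isometrically with $(V,d)$ and then invoking hereditarity of $\mathscr{P}_1$ and $\mathscr{P}_2$) is just a careful writing-out of that implication. Your closing remark about why $k=3$ must be excluded is also accurate.
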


\subsection{The Gromov--Hausdorff distance}\label{subsec:gh}
For a metric space 
$(Z, h)$,  
and  for subsets 
$A$,  $B$ 
of 
$Z$, 
we denote by 
$\hdis(A,B;Z, h)$ 
the Hausdorff distance of 
$A$ 
and 
$B$ 
in 
$Z$. 
For  metric spaces 
$X$ 
and 
$Y$, 
the 
\emph{Gromov--Hausdorff distance} 
$\grdis((X, d),(Y, e))$ between 
$X$ 
and 
$Y$ 
is defined as 
the infimum of  all values  
$\hdis(i(X), j(Y); Z, h)$, 
where 
$(Z, h)$ 
is a metric space, 
 and 
$i: X\to Z$ 
and 
$j: Y\to Z$ 
are isometric embeddings. 
For  $\epsilon\in (0, \infty)$, 
and for metric spaces 
$X$ 
and 
$Y$, 
a pair 
$(f, g)$ 
with 
$f:X\to Y$ 
and 
$g:Y\to X$ 
is said to be 
an \emph{$\epsilon$-approximation} between 
$(X, d)$ 
and 
$(Y, e)$
if the following conditions hold: 
\begin{enumerate}
\item for all 
$x, y\in X$, 
we have 
$|d(x,y)-e(f(x),f(y))|<\epsilon$; 
\item for all 
$x,y\in Y$, 
we have 
$|e(x,y)-d(g(x), g(y))|<\epsilon$; 
\item for each 
$x\in X$ 
and for each 
$y\in Y$, 
we have 
$d(g\circ f(x),x)<\epsilon$ 
and 
$e(f\circ g(x), x)<\epsilon$. 
\end{enumerate}
Remark that if there exists a map  $f: (X, d)\to (Y, e)$ satisfying (1) and $\bigcup_{y\in f(X)}B(y, \epsilon)=Y$, then there exists $g:(Y, e)\to (X, d)$ such that $(f, g)$ is a $3\epsilon$-approximation between $(X, d)$ and $(Y, e)$ (see \cite{jansen2017notes}). 

The proof of the next lemma 
is presented 
in 
\cite{BBI} and \cite{jansen2017notes}. 
\begin{lem}\label{lem:approx}
Let 
$\{(X_{i}, d_{i})\}_{i\in \cind}$  
be 
a  sequence 
of 
compact metric spaces, 
and 
$(X, d)$ 
be a compact metric space. 
Then 
$\grdis((X_{i}, d_{i}), (X, d))\to 0$ 
as 
$i\to \infty$
if and only  if 
there exists a sequence 
$\{\epsilon_{i}\}_{i\in \cind}$ 
in 
$(0, \infty)$ 
converging to 
$0$,  
and for each 
$i\in \cind$ 
there exists 
an  
$\epsilon_{i}$-approximation 
between 
$(X_{i}, d_{i})$ 
and 
 $(X, d)$. 
\end{lem}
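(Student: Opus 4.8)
The plan is to prove the two implications separately, in each direction passing explicitly between $\epsilon$-approximations and realizations of $(X_i, d_i)$ and $(X, d)$ inside a common metric space. For the forward implication, suppose $\grdis((X_i, d_i), (X, d))\to 0$ and fix a sequence $\{\delta_i\}_{i\in\cind}$ in $(0, \infty)$ with $\delta_i\to 0$ and $\grdis((X_i, d_i), (X, d))<\delta_i$ for all $i$. By the definition of $\grdis$, for each $i$ choose a metric space $(Z_i, h_i)$ and isometric embeddings $\iota_i\colon X_i\to Z_i$, $\jmath_i\colon X\to Z_i$ with $\hdis(\iota_i(X_i), \jmath_i(X); Z_i, h_i)<\delta_i$. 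First I would use this Hausdorff bound to pick, for every $a\in X_i$, a point $f_i(a)\in X$ with $h_i(\iota_i(a), \jmath_i(f_i(a)))<\delta_i$, and likewise a point $g_i(b)\in X_i$ for every $b\in X$ with $h_i(\iota_i(g_i(b)), \jmath_i(b))<\delta_i$. Then the three conditions defining a $2\delta_i$-approximation between $(X_i, d_i)$ and $(X, d)$ all follow from the triangle inequality in $(Z_i, h_i)$ together with the fact that $\iota_i$ and $\jmath_i$ preserve distances; for example $|d_i(a, a')-d(f_i(a), f_i(a'))|\le h_i(\iota_i(a), \jmath_i(f_i(a)))+h_i(\iota_i(a'), \jmath_i(f_i(a')))<2\delta_i$, and $d_i(g_i(f_i(a)), a)\le h_i(\iota_i(g_i(f_i(a))), \jmath_i(f_i(a)))+h_i(\jmath_i(f_i(a)), \iota_i(a))<2\delta_i$. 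Taking $\epsilon_i:=2\delta_i$ then yields the required sequence of approximations.

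For the converse, suppose that for each $i$ we are given an $\epsilon_i$-approximation $(f_i, g_i)$ between $(X_i, d_i)$ and $(X, d)$ with $\epsilon_i\to 0$. I would bound $\grdis((X_i, d_i), (X, d))$ by means of the correspondence $R_i=\{\, (a, f_i(a))\mid a\in X_i\, \}\cup\{\, (g_i(b), b)\mid b\in X\, \}\subseteq X_i\times X$, both of whose coordinate projections are surjective. The key estimate is that $|d_i(p, p')-d(q, q')|<2\epsilon_i$ whenever $(p, q), (p', q')\in R_i$: when both pairs are of the first kind this is condition (1) of the approximation, when both are of the second kind it is condition (2), and in the mixed case---say $(p, q)=(a, f_i(a))$ and $(p', q')=(g_i(b'), b')$---one bounds $d_i(a, g_i(b'))$ from above by inserting $g_i(f_i(a))$ and using (3) and then (2), and from below by rewriting $d(f_i(a), b')$ through $f_i(g_i(b'))$ and using (3) and then (1), obtaining $|d_i(a, g_i(b'))-d(f_i(a), b')|<2\epsilon_i$. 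Hence $\dis(R_i)\le 2\epsilon_i$, and since the Gromov--Hausdorff distance is at most one half of the distortion of any correspondence---which one sees by gluing $X_i$ and $X$ along $R_i$ into a single metric on $X_i\sqcup X$ realizing this bound, as in \cite{BBI}---we conclude $\grdis((X_i, d_i), (X, d))\le\epsilon_i\to 0$.

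The only step that is not routine bookkeeping is the mixed case of the distortion estimate, where one must chase $f_i$ and $g_i$ through condition (3) twice so as to lose no more than a fixed multiple of $\epsilon_i$; everything else reduces to the triangle inequality. I would also note that the selection maps $f_i$ and $g_i$ in the forward direction need only near-optimal choices of points, and that compactness of the spaces is used in neither implication beyond fixing the setting in which $\grdis$ is considered.
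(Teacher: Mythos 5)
Your proof is correct: both the forward direction (extracting near-optimal selection maps from a common realization with small Hausdorff distance) and the converse (bounding the distortion of the correspondence $R_i=\{(a,f_i(a))\}\cup\{(g_i(b),b)\}$ via Lemma \ref{lem:ghestimate}) are carried out accurately, including the mixed-case estimate. The paper omits the proof and defers to \cite{BBI} and \cite{jansen2017notes}, and your argument is essentially the standard one found there, so nothing further is needed.
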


Let 
$(X, d)$ 
and 
$(Y, e)$ 
be compact metric spaces. 
We say that  a subset 
$R$ 
of 
$X\times Y$ 
is a \emph{correspondence} if
$\pi_X(R)=X$ 
and 
$\pi_Y(R)=Y$, 
where
$\pi_{X}$ 
and 
$\pi_{Y}$ 
are projections into 
$X$ 
and 
$Y$,  
respectively. 
We denote by 
$\rela(X,Y)$ 
(resp.~$\relacc(X, d,Y, e)$) 
the set of all correspondences 
(resp.~closed correspondences in 
$X\times Y$) 
of 
$X$ 
and  
$Y$. 
For 
$R\in \rela(X,Y)$, 
we define 
the 
\emph{distortion $\dis(R)$ of $R$} 
by 
\[
\dis(R)=\sup_{(x,y), (u,v)\in R}|d(x,u)-e(y,v)|.
\]

The proof of the next lemma 
is presented 
 in  \cite{BBI}. 
\begin{lem}\label{lem:ghestimate}
For all compact metric spaces 
$(X, d)$ 
and 
$(Y, e)$, 
we obtain 
\[
\grdis((X, d), (Y, e))=\frac{1}{2}\inf_{R\in \rela(X,Y)}\dis(R). 
\] 

\end{lem}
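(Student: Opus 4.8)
The plan is to establish the two inequalities separately, following the classical argument (cf.\ \cite{BBI}).

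To prove $\grdis((X, d), (Y, e))\le \frac{1}{2}\inf_{R\in \rela(X,Y)}\dis(R)$, I would fix a correspondence $R\in \rela(X,Y)$, put $r=\dis(R)$, and make the disjoint union $Z=X\sqcup Y$ into a metric space by declaring that the metric $h$ restricts to $d$ on $X^{2}$ and to $e$ on $Y^{2}$, and setting, for $x\in X$ and $y\in Y$,
\[
h(x,y)=\inf_{(a,b)\in R}\bigl(d(x,a)+\tfrac{r}{2}+e(b,y)\bigr),
\]
and symmetrically for $h(y,x)$. The first step is to check that $h$ is indeed a metric: symmetry is clear, the triangle inequalities that cross the two pieces at most once follow immediately from the definition and the triangle inequalities of $d$ and $e$, and positivity across the pieces follows from $h(x,y)\ge r/2$. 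The only non-routine point is the inequality $d(x_{1},x_{2})\le h(x_{1},y)+h(y,x_{2})$ for $x_{1},x_{2}\in X$ and $y\in Y$; here one uses $e(b,b')+r\ge d(a,a')$, which is exactly the definition of $\dis(R)$, together with the triangle inequalities of $d$ and $e$. The second step: since $\pi_{X}(R)=X$ and $\pi_{Y}(R)=Y$, for every $x\in X$ there is $y\in Y$ with $h(x,y)\le r/2$ and conversely, so $\hdis(X,Y;Z,h)\le r/2$; hence $\grdis((X,d),(Y,e))\le r/2=\dis(R)/2$, and taking the infimum over $R$ gives the inequality. The degenerate case $\dis(R)=0$ is handled by replacing $r/2$ with $(r+\epsilon)/2$ and letting $\epsilon\to 0$.

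For the reverse inequality $\frac{1}{2}\inf_{R\in \rela(X,Y)}\dis(R)\le \grdis((X,d),(Y,e))$, I would take an arbitrary metric space $(Z,h)$ together with isometric embeddings $i\colon X\to Z$ and $j\colon Y\to Z$, and set $s=\hdis(i(X),j(Y);Z,h)$. Using compactness of $i(X)$ and $j(Y)$, the relevant infima of distances are attained, so $R=\{\,(x,y)\in X\times Y\mid h(i(x),j(y))\le s\,\}$ is a correspondence. A short computation with the triangle inequality in $(Z,h)$ shows, for all $(x,y),(x',y')\in R$, that $|d(x,x')-e(y,y')|=|h(i(x),i(x'))-h(j(y),j(y'))|\le h(i(x),j(y))+h(i(x'),j(y'))\le 2s$, so $\dis(R)\le 2s$. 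Therefore $\inf_{R}\dis(R)\le 2s$, and taking the infimum over all such $(Z,h,i,j)$ and using the definition of $\grdis$ yields $\inf_{R}\dis(R)\le 2\,\grdis((X,d),(Y,e))$. Combining the two inequalities finishes the proof.

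I expect the main obstacle to be the verification that the glued function $h$ in the first part is a metric --- specifically the long triangle inequality that passes through a point on the opposite side, together with the non-degeneracy across the two pieces. This is the only place where the hypothesis is genuinely used (through the definition of $\dis(R)$), and it is also where the degenerate case $\dis(R)=0$ requires the small limiting remark; everything else is bookkeeping with the triangle inequality and with the surjectivity of the two projections that defines a correspondence.
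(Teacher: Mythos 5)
Your argument is correct and is essentially the standard proof from Burago--Burago--Ivanov, which is exactly the source the paper cites for this lemma (the paper gives no proof of its own). Both directions are handled properly, including the degenerate case $\dis(R)=0$ via the $\epsilon$-perturbation and the use of compactness to make $R=\{(x,y)\mid h(i(x),j(y))\le s\}$ a genuine correspondence.
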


Let  
$(X, d)$ 
and 
$(Y, e)$ 
be  compact metric spaces. 
We denote by 
$\relacc_{opt}(X, d, Y, e)$ 
the 
set of all 
$G\in \relacc(X, d, Y, e)$ 
satisfying that 
$\dis(G)=\inf_{R\in \rela(X,Y)}\dis(R)$.
An element of 
$\relacc_{opt}(X, d, Y, e)$ 
is 
said to be 
\emph{optimal}. 
The proof of the next lemma is presented 
in \cite{CM2018} and \cite{IIT2016}. 
\begin{lem}\label{lem:opt}
If 
$(X, d)$ 
and 
$(Y, e)$ 
are compact metric spaces, 
then 
$\relacc_{opt}(X, d, Y, e)\neq \emptyset$.  
\end{lem}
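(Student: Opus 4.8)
The plan is to recognize $\relacc_{opt}(X,d,Y,e)$ as the set of minimizers of the distortion functional $\dis$ over the space of closed correspondences, and to obtain such a minimizer by a compactness argument in the hyperspace of closed subsets of $X\times Y$.

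First I would show that closed correspondences are cofinal among all correspondences, so that passing to them does not change the relevant infimum. Given $R\in\rela(X,Y)$, consider its closure $\overline{R}$ in the compact space $(X\times Y,\,d\times_{\infty}e)$. Since the projections $\pi_{X},\pi_{Y}$ are continuous and $X\times Y$ is compact, $\pi_{X}(\overline{R})$ and $\pi_{Y}(\overline{R})$ are closed sets containing $\pi_{X}(R)=X$ and $\pi_{Y}(R)=Y$, hence equal to them, so $\overline{R}\in\relacc(X,d,Y,e)$. Moreover the function $\Phi\colon(X\times Y)^{2}\to[0,\infty)$, $\Phi((x,y),(u,v))=|d(x,u)-e(y,v)|$, is continuous, so its supremum over $R$ equals its supremum over $\overline{R}$, i.e.\ $\dis(\overline{R})=\dis(R)$. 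Together with $\relacc(X,d,Y,e)\subseteq\rela(X,Y)$ this yields
\[
\inf_{R\in\rela(X,Y)}\dis(R)=\inf_{R\in\relacc(X,d,Y,e)}\dis(R),
\]
so it suffices to produce $G\in\relacc(X,d,Y,e)$ attaining the right-hand infimum.

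Next, let $\mathcal{C}=\mathcal{C}(X\times Y)$ denote the set of all non-empty closed subsets of $X\times Y$, equipped with the Hausdorff distance $\hdis(\cdot,\cdot;X\times Y,d\times_{\infty}e)$; since $X\times Y$ is compact, the Blaschke selection theorem (see \cite{BBI}) makes $\mathcal{C}$ a compact metric space. I would then verify two facts. (a) $\relacc(X,d,Y,e)$ is closed in $\mathcal{C}$: if $R_{n}\to R$ in $\mathcal{C}$ with each $R_{n}$ a correspondence, then for $x\in X$ pick $y_{n}\in Y$ with $(x,y_{n})\in R_{n}$, pass to a subsequence with $y_{n}\to y$ by compactness of $Y$, and conclude $(x,y)\in R$ using closedness of $R$ and $\hdis(R_{n},R;\dots)\to 0$; hence $\pi_{X}(R)=X$, and symmetrically $\pi_{Y}(R)=Y$. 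Thus $\relacc(X,d,Y,e)$ is compact, and it is non-empty because $X\times Y$ is itself a closed correspondence. (b) $\dis$ is lower semicontinuous on $\mathcal{C}$: if $R_{n}\to R$ with $\dis(R_{n})\le c$, then for any two pairs $p,p'\in R$ choose $p_{n},p_{n}'\in R_{n}$ with $p_{n}\to p$ and $p_{n}'\to p'$, use $\Phi(p_{n},p_{n}')\le\dis(R_{n})\le c$, and let $n\to\infty$ via continuity of $\Phi$ to get $\Phi(p,p')\le c$; taking the supremum over $p,p'$ shows $\{\,R:\dis(R)\le c\,\}$ is closed.

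Finally, a lower semicontinuous function on a non-empty compact metric space attains its infimum, so $\dis$ attains $\inf_{R\in\relacc(X,d,Y,e)}\dis(R)$ at some $G\in\relacc(X,d,Y,e)$; by the displayed equality this value equals $\inf_{R\in\rela(X,Y)}\dis(R)$, whence $G\in\relacc_{opt}(X,d,Y,e)$, proving the lemma. I expect the only point requiring care to be fact (a): one must use both halves of Hausdorff convergence — in particular that every point of $R_{n}$ stays within $\hdis(R_{n},R;\dots)$ of $R$ — to ensure that surjectivity of the projections survives in the limit; the remaining steps are routine continuity-and-compactness arguments.
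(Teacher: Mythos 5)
Your argument is correct and is essentially the standard proof of this lemma: the paper itself does not prove it but defers to \cite{CM2018} and \cite{IIT2016}, where the same strategy appears --- replace a correspondence by its closure without changing the distortion, use the Blaschke selection theorem to make the set of closed correspondences compact in the Hausdorff hyperspace of $X\times Y$, check it is closed and non-empty, and extract a minimizer of the (lower semicontinuous) distortion functional. All the individual steps you flag, including the closedness of $\relacc(X,d,Y,e)$ via both halves of Hausdorff convergence, are carried out correctly.
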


For a set $X$, we define  
$\Delta_{X}\in \rela(X, X)$ 
by 
$\Delta_{X}=\{\, (x, x)\mid x\in X\, \}$, 
and we call it  the 
\emph{trivial correspondence of $X$}.

\subsection{Amalgamation of metrics}
Since the following lemma seems to be classical, 
we omit the proof. 
For 
$n\in \cind$, 
we put 
$\widehat{n}=\{1, 2, \dots, n\}$, 
and 
we consider that 
$\widehat{n}$
 is always  equipped with the discrete topology. 
\begin{lem}\label{lem:amaldiam}
Let 
$(X_{i}, d_{i})$ 
be a sequence of metric spaces. 
Let 
$e\in \met(\widehat{n})$. 
Assuming that 
$\delta_{d_{i}}(X_{i})\le 
\alpha_{e}\left(\widehat{n}\right)$, 
we 
define a symmetric function 
$D: (\coprod_{i=1}^{n}X_{i})^{2}\to [0, \infty)$
by 
\[
D(x, y)=
\begin{cases}
d_{i}(x, y) & \text{if $x, y\in X_{i}$}\\
e(i, j) & \text{if $x\in X_{i}$ and $y\in X_{j}$ with $i\neq j$}
\end{cases}
\]
Then 
$D\in \met(\coprod_{i=1}^{n}X_{i})$. 
\end{lem}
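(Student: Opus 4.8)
The plan is to verify directly that the symmetric function $D$ satisfies all the axioms of a metric on $\coprod_{i=1}^{n} X_i$ and that it induces the direct sum topology. Positivity and the identity of indiscernibles are immediate: for $x, y$ in the same piece $X_i$, $D(x,y) = d_i(x,y)$ which is positive unless $x = y$; for $x \in X_i$, $y \in X_j$ with $i \neq j$, $D(x,y) = e(i,j) > 0$ since $e$ is a metric on $\widehat{n}$ and $i \neq j$, and here $x \neq y$ automatically because the union is disjoint. Symmetry holds by construction (using symmetry of each $d_i$ and of $e$). So the only real content is the triangle inequality.

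For the triangle inequality $D(x,z) \le D(x,y) + D(y,z)$, I would do a case analysis on which pieces $x$, $y$, $z$ lie in. If all three lie in the same $X_i$, it is just the triangle inequality for $d_i$. If all three lie in distinct pieces, it is the triangle inequality for $e$. The mixed cases are where the diameter hypothesis $\delta_{d_i}(X_i) \le \alpha_e(\widehat{n})$ enters: suppose $x, y \in X_i$ and $z \in X_j$ with $i \neq j$. Then $D(x,z) = e(i,j) = D(y,z)$, so we need $e(i,j) \le d_i(x,y) + e(i,j)$, which is trivial. Suppose instead $x \in X_i$, $y \in X_j$, $z \in X_k$ with $i \neq j$ and $j = k$ but $i \neq k$ — then $D(x,z) = e(i,k)$, $D(x,y) = e(i,j)$, $D(y,z) = d_k(y,z)$, and we need $e(i,k) \le e(i,j) + d_k(y,z)$; since $i = $ wait, here $j = k$ so $e(i,j) = e(i,k)$ and this is again trivial. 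The genuinely delicate configuration is $x \in X_i$, $z \in X_i$ (same piece), $y \in X_j$ with $j \neq i$: then $D(x,z) = d_i(x,z) \le \delta_{d_i}(X_i) \le \alpha_e(\widehat{n}) \le e(i,j) = D(x,y)$, and adding the nonnegative term $D(y,z) = e(j,i)$ only helps. The remaining configuration $x \in X_i$, $y \in X_j$, $z \in X_k$ with $i, j, k$ pairwise distinct reduces to the triangle inequality for $e$ applied to $i, j, k$. Collecting these cases establishes the triangle inequality.

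Finally, to see $D \in \met(\coprod_{i=1}^{n} X_i)$ — i.e.\ that $D$ generates the direct sum topology — note that on each piece $X_i$ the restriction of $D$ is exactly $d_i$, so the subspace topology from $D$ on $X_i$ agrees with the original topology of $X_i$; and each $X_i$ is both open and closed in $(\coprod_{i=1}^n X_i, D)$ because any point of $X_i$ is at $D$-distance at least $\alpha_e(\widehat{n}) > 0$ from every point outside $X_i$. A finite disjoint union of spaces, each open and carrying its own topology, is precisely the direct sum, so $D$ induces the direct sum topology. I do not expect any serious obstacle here: the lemma is, as the authors say, essentially classical, and the only point requiring care is tracking the mixed cases of the triangle inequality and seeing exactly where $\delta_{d_i}(X_i) \le \alpha_e(\widehat{n})$ is needed (namely, to bound the within-a-piece distance by a between-pieces distance).
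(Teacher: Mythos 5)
Your proof is correct. The paper actually omits the proof of this lemma entirely, remarking only that it ``seems to be classical,'' so there is no argument in the text to compare against; your case analysis of the triangle inequality --- in particular isolating the one genuinely nontrivial configuration $x,z\in X_i$, $y\in X_j$ with $j\neq i$, where the hypothesis $\delta_{d_i}(X_i)\le\alpha_e(\widehat{n})$ is used --- together with the observation that each piece is clopen at distance at least $\alpha_e(\widehat{n})>0$ from the others (so that $D$ induces the direct sum topology, as required for membership in $\met(\coprod_{i=1}^n X_i)$), is exactly the standard argument the author is implicitly relying on.
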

Let $(X, d)$ be a metric space and $\epsilon\in (0, \infty)$. A finite  subset $S$ of $X$ is an \emph{$\epsilon$-net} if $\alpha_{d}(S)\ge \epsilon$ and $\bigcup_{x\in S}B(x, \epsilon)=X$. 
\begin{lem}\label{lem:denseuvw}
The following holds true:
\begin{enumerate}
\item 
For every 
$(u, v, w)\in \{0, 1, 2\}^{3}$, 
the set 
$\mathscr{X}(u, v, w)$ 
is dense in 
$\grsp$.\label{item:uvwdense}
\item 
For every dimensional function $\dimdim$, 
the set $\iii(\dimdim)$ is dense in $\grsp$.\label{item:iddense}
\item 
The set $\ccc$ is dense in $\grsp$.\label{item:cccdense}
\end{enumerate}
\end{lem}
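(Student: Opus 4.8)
My plan is to establish all three statements simultaneously by a single ``replace an $\epsilon$-net with small building blocks'' construction. Fix $(X,d)\in\grsp$ and $\epsilon\in(0,\infty)$; it suffices to exhibit a space $(Z,D)$ lying in the relevant class with $\grdis((Z,D),(X,d))\le\epsilon$. First I would choose a maximal subset $S=\{x_{1},\dots,x_{n}\}\subseteq X$ with pairwise distances $\ge\epsilon$; by compactness $S$ is finite, and by maximality every point of $X$ lies within $\epsilon$ of $S$, so $S$ is an $\epsilon$-net and $\alpha_{d}(S)\ge\epsilon$. To each $x_{i}$ I attach a small compact metric space $(W_{i},d_{i})$ --- the choice of the $W_{i}$ is the only ingredient that differs among the three statements --- rescaled, where this is permitted, so that $\delta_{d_{i}}(W_{i})\le\epsilon/2$. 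Putting $e(i,j)=d(x_{i},x_{j})$ on $\widehat{n}$ and noting $\delta_{d_{i}}(W_{i})\le\alpha_{d}(S)=\alpha_{e}(\widehat{n})$, Lemma \ref{lem:amaldiam} produces a metric $D$ on $Z=\coprod_{i=1}^{n}W_{i}$ that restricts to $d_{i}$ on each $W_{i}$ and equals $d(x_{i},x_{j})$ between $W_{i}$ and $W_{j}$. The set $R\subseteq Z\times X$ pairing each point of $W_{i}$ with every $x\in X$ whose closest point of $S$ is $x_{i}$ is a correspondence, and a short case check ($i=j$, using $\delta_{d_{i}}(W_{i})\le\epsilon/2$; and $i\neq j$, using $d(x,S)\le\epsilon$) gives $\dis(R)\le 2\epsilon$, so Lemma \ref{lem:ghestimate} yields $\grdis((Z,D),(X,d))\le\epsilon$. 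Note that each $W_{i}$ is clopen in $Z$, since distinct pieces are at distance $\ge\alpha_{d}(S)>0$; since $\epsilon$ was arbitrary, it remains only to choose the $W_{i}$ and verify that $Z$ lands in the desired class.

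For statement (\ref{item:uvwdense}), given $(u,v,w)\in\{0,1,2\}^{3}$ I would set $a_{k}=u_{k}$ when $u_{k}\in\{0,1\}$ and $a_{k}=1$ when $u_{k}=2$, so that any space of type $(a_{1},a_{2},a_{3})$ automatically lies in $\mathscr{X}(u,v,w)$. If $(a_{1},a_{2},a_{3})\neq(1,1,1)$, let $W$ be a Cantor metric space of totally exotic type $(a_{1},a_{2},a_{3})$ (Theorem \ref{thm:totexo}); otherwise --- which happens exactly when no coordinate of $(u,v,w)$ equals $0$ --- let $W$ be the middle-thirds Cantor set with the Euclidean metric, which is doubling, uniformly disconnected, and uniformly perfect, hence of type $(1,1,1)$. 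Taking every $W_{i}$ to be a rescaled copy of $W$ and using that the three properties are scale invariant (Proposition \ref{prop:qsinvariant3}) gives $T_{\mathscr{P}_{k}}(W_{i},d_{i})=a_{k}$; then applying Lemma \ref{lem:qssqcup} inductively along $Z=W_{1}\sqcup(W_{2}\sqcup\cdots\sqcup W_{n})$ gives $T_{\mathscr{P}_{k}}(Z,D)=T_{\mathscr{P}_{k}}(W_{1},d_{1})\land\cdots\land T_{\mathscr{P}_{k}}(W_{n},d_{n})=a_{k}$ for each $k\in\{1,2,3\}$, so $(Z,D)\in\mathscr{X}(u,v,w)$.

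For statement (\ref{item:iddense}), I would use condition (\ref{item:dimfunc2}) in the definition of a dimensional function to pick a compact metric space $(Y,e_{Y})$ with $\dimdim(Y,e_{Y})=\infty$ and $\delta_{e_{Y}}(Y)\le\epsilon/2$, and take every $W_{i}$ to be an isometric copy of $(Y,e_{Y})$ (without rescaling, as a dimensional function need not be scale invariant). Each $W_{i}$ is a closed subset of $Z$, so the first condition in the definition of a dimensional function gives $\dimdim(Z,D)\ge\dimdim(W_{i},D)=\dimdim(Y,e_{Y})=\infty$, i.e.\ $(Z,D)\in\iii(\dimdim)$. For statement (\ref{item:cccdense}), I would take every $W_{i}$ to be a rescaled copy of the Cantor set; then $Z$ is non-empty, compact, metrizable, and --- since each clopen piece $W_{i}$ is perfect and totally disconnected --- itself perfect and totally disconnected, hence homeomorphic to $\Gamma$ by Brouwer's characterization, so $(Z,D)\in\ccc$.

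The bookkeeping with diameters and distortions in the first paragraph is routine. The one genuinely delicate point, and where I would concentrate, is in statement (\ref{item:uvwdense}): one must furnish a single building block $W$ realizing the prescribed values of all three properties for every $(u,v,w)\in\{0,1,2\}^{3}$ (the case split above, resting on Theorem \ref{thm:totexo}), and then know that the finite amalgamated disjoint union preserves all three properties at once --- including uniform perfectness, for which the product construction underlying Lemma \ref{lem:timestotexo} is unavailable and one must rely on Lemma \ref{lem:qssqcup}, which does cover $k=3$.
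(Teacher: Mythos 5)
Your proposal is correct and follows essentially the same route as the paper: take an $\epsilon$-net, attach small building blocks of the appropriate type at each net point via the amalgam metric of Lemma \ref{lem:amaldiam}, and invoke Lemma \ref{lem:qssqcup} (resp.\ the closed-subset monotonicity of $\dimdim$, resp.\ Brouwer's characterization) to verify membership in the target class. The only cosmetic differences are that you source the building block from Theorem \ref{thm:totexo} with a case split at $(1,1,1)$ where the paper cites the existence of Cantor spaces of arbitrary type directly, and that you bound the Gromov--Hausdorff distance by an explicit correspondence rather than the triangle inequality through the net.
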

\begin{proof}We first deal with the statement 
(\ref{item:uvwdense}). 
We only  need to  consider 
 the cases of 
 $(u, v, w)\in \{0, 1\}^{3}$. 
Take 
$(X, d)\in \grsp$, 
and 
$\epsilon \in (0, \infty)$.
 Let 
 $A=\{a_{i}\}_{i=1}^{n}$ 
 be an 
 $\epsilon$-net 
 of 
 $(X, d)$. 
 Define 
 $e\in \met(\widehat{n})$ 
 by 
 $e(i, j)=d(a_{i}, a_{j})$. 
 Put 
 $\eta=\min\{\epsilon, \alpha_{e}(\widehat{n})\}$. 
 Take a Cantor metric space 
 $(M, h)$ 
 of type 
 $(u, v, w)$ 
 (see \cite[Theorem 1.2]{Ishiki2019}). 
By  replacing with 
 $(M, (\eta/\delta_{h}(M))h)$ 
 if necessary,
  we may assume that 
  $\delta_{h}(M)\le \eta$. 
For each 
$i\in \{1, \dots, n\}$, 
 let 
 $(X_{i}, d_{i})$ 
 be an isometric copy of 
 $(M, h)$. 
Let
  $Z=\coprod_{i=1}^{n}X_{i}$. 
 Let 
 $D$ 
 be a metric 
stated in 
Lemma \ref{lem:amaldiam} 
induced from  
$e$ 
and 
 $\{(X_{i}, d_{i})\}_{i=1}^{n}$.
Then, 
by Lemma \ref{lem:qssqcup},  
the space 
$(Z, D)$ 
is 
of type 
$(u, v, w)$, 
and we obtain 
\[
\grdis((X, d), (Z, D))\le \grdis((X, d), (A, d))+\grdis((A, d), (Z, D))\le 2\epsilon. 
\]
 Thus, 
 $\mathscr{X}(u, v, w)$ 
 is 
 dense in 
 $\grsp$. 
 
 The statements (\ref{item:iddense}) and 
 (\ref{item:cccdense}) can be proven in  the same way as 
 the statement (\ref{item:uvwdense}).  For the statement  (\ref{item:iddense}), we  use the condition (\ref{item:dimfunc2}) in the definition of dimensional functions. 
\end{proof}


\section{Topological distributions}\label{sec:topdis}
In this section, 
we prove Theorems \ref{thm:typicality}
and \ref{thm:topdisuvw}.

\begin{lem}\label{lem:setd}
The set 
$\setd$ 
is  
$F_{\sigma}$ 
and  meager  in 
$\grsp$. 
\end{lem}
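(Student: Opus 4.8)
The plan is to express $\setd$ as a countable union of sets that are each closed in $\grsp$, and then to verify meagerness by showing each such set is nowhere dense. For the $F_\sigma$ part, I would use the characterization of the doubling property given in Lemma \ref{lem:doublingprop}: a metric space $(X,d)$ is doubling if and only if there exist $\beta\in(0,\infty)$ and $C\in[1,\infty)$ such that $\card(A)\le C\cdot(\delta_d(A)/\alpha_d(A))^\beta$ for every finite subset $A\subset X$. Since $\beta$ and $C$ may be taken to range over positive integers (enlarging them only weakens the inequality), I would set, for each $n\in\nn$,
\[
\setd_n=\Bigl\{\,(X,d)\in\grsp \;\Bigm|\; \card(A)\le n\cdot\bigl(\delta_d(A)/\alpha_d(A)\bigr)^{n}\ \text{for every finite }A\subset X\,\Bigr\},
\]
so that $\setd=\bigcup_{n\in\nn}\setd_n$. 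The first key step is to prove each $\setd_n$ is closed: given a sequence $(X_i,d_i)\in\setd_n$ with $\grdis((X_i,d_i),(X,d))\to 0$, I would take any finite $A=\{x_1,\dots,x_k\}\subset X$, pull it back through $\epsilon_i$-approximations (Lemma \ref{lem:approx}) to finite subsets $A_i\subset X_i$ of the same cardinality for $i$ large, and pass to the limit in the defining inequality; the subtlety is handling the case $\alpha_d(A)=0$ versus $\alpha_d(A)>0$, but for $k\ge 2$ distinct points $\alpha_d(A)>0$ and the approximation preserves distances up to $\epsilon_i\to 0$, so the inequality survives the limit.

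The second key step is meagerness. Since $\setd=\bigcup_n\setd_n$ and each $\setd_n$ is closed, it suffices to show each $\setd_n$ has empty interior, i.e.\ is nowhere dense. For this I would argue that every non-empty open set in $\grsp$ contains a space that is not $n$-doubling in the strong quantitative sense defining $\setd_n$; in fact it is cleanest to show every open set contains a \emph{non-doubling} space, using the density argument already available. Concretely, given $(X,d)\in\grsp$ and $\epsilon>0$, take an $\epsilon$-net $A=\{a_1,\dots,a_k\}$ and, as in the proof of Lemma \ref{lem:denseuvw}, amalgamate (via Lemma \ref{lem:amaldiam}) copies of a fixed non-doubling compact metric space of sufficiently small diameter in place of each point; Lemma \ref{lem:qssqcup} (or a direct check, since a finite disjoint sum of a non-doubling space with bounded gluing metric is again non-doubling) shows the resulting space is non-doubling, hence lies in no $\setd_n$, while remaining within $2\epsilon$ of $(X,d)$. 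A convenient explicit non-doubling compact space is $(\nn\cup\{\infty\},\rho)$ suitably metrized, or any space of infinite Assouad dimension with small diameter. Thus $\setd_n$ is nowhere dense for every $n$, and $\setd$ is meager.

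The main obstacle I anticipate is the closedness of $\setd_n$ under $\grdis$-limits: the defining condition quantifies over \emph{all} finite subsets, and an $\epsilon_i$-approximation only controls distances additively, not multiplicatively, so when the transported set $A_i\subset X_i$ has very small $\alpha_{d_i}(A_i)$ the ratio $\delta_{d_i}(A_i)/\alpha_{d_i}(A_i)$ could in principle blow up or behave badly. The resolution is that for a \emph{fixed} finite $A\subset X$ with $\alpha_d(A)=2\eta>0$, choosing $i$ large enough that $\epsilon_i<\eta/2$ forces $\alpha_{d_i}(A_i)\ge \eta$ and $|\delta_{d_i}(A_i)-\delta_d(A)|<\epsilon_i$, so the ratios converge and the inequality $\card(A)\le n(\delta/\alpha)^n$ passes to the limit; the case $k\le 1$ is trivial. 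With this in hand the remaining steps are routine, and the analogous statements for $\setud$ and $\setup$ (needed for Theorem \ref{thm:typicality}) will follow by the same scheme using their quantitative characterizations.
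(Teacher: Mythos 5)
Your proposal is correct and follows essentially the same route as the paper: decompose $\setd$ via the quantitative characterization of Lemma \ref{lem:doublingprop} into countably many pieces (the paper indexes by $C,\beta\in\qq_{>0}$ rather than a single integer $n$, an inessential difference), prove each piece closed by transporting a fixed finite subset through $\epsilon_i$-approximations and passing to the limit, and deduce meagerness from the density of the non-doubling spaces, which is exactly the content of Lemma \ref{lem:denseuvw} that the paper cites. No gaps.
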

\begin{proof}
Since 
$\grsp\setminus \setd$ 
is dense (see Lemma \ref{lem:denseuvw}), 
it suffices to show that 
$\setd$ 
is 
$F_{\sigma}$. 
For 
$C, \beta\in (0, \infty)$, 
let 
$\mathscr{S}(C, \beta)$ 
be the set of all 
$(X, d)$ 
such that 
for every finite subset of 
$X$ 
we have 
\[
\card(A)\le 
C\cdot \left(\frac{\delta_{d}(A)}{\alpha_{d}(A)}\right)^{\beta}. 
\]
We now prove that 
 each 
 $\mathscr{S}(C, \beta)$ 
 is closed 
in 
$\grsp$. 
Take a convergent sequence  
$\{(X_{i},  d_{i})\}_{i\in \cind}$ 
in 
$\mathscr{S}(C, \beta)$, and 
let 
$(X, d)$ 
be its limit space. 
Then,  
by Lemma \ref{lem:approx}, 
 there exist 
 a positive sequence  
 $\{\epsilon_{i}\}_{i\in \cind}$ 
 converging  to 
 $0$, 
and  sequences 
$\{f_{i}: (X_{i}, d_{i})\to (X, d)\}_{i\in \cind}$ 
and 
$\{g_{i}: (X, d)\to (X_{i}, d_{i})\}_{i\in \cind}$
such that for each 
$i\in \cind$ 
the pair 
$(f_{i}, g_{i})$ 
is an 
$\epsilon_{i}$-approximation. 
 Take an arbitrary  finite subset 
 $A$ 
 of 
 $X$. 
Take a sufficiently large 
$i\in \cind$, 
then 
$g_{i}: A\to g_{i}(A)$ 
is bijective. 
Thus, 
\begin{align*}
\card(A)=\card(g_n(A))
\le C\cdot 
\left(
\frac{\delta_{d_{i}}(g_{i}(A))}{\alpha_{d_{i}}(g_{i}(A))}
\right)^{\beta}
\le C\cdot \left(
\frac{\delta_{d}(A)+\epsilon_{i}}{\alpha_{d}(A)-\epsilon_{i}}
\right)^{\beta}. 
\end{align*}
By letting 
$i\to \infty$, 
we obtain 
\[
\card(A)\le  C\cdot \left(\frac{\delta_{d}(A)}{\alpha_{d}(A)}\right)^{\beta}. 
\]
Then 
$(X, d)\in \mathscr{S}(C, \beta)$, 
and hence 
$\mathscr{S}(C, \beta)$ 
is closed in 
$\grsp$.

By Lemma \ref{lem:doublingprop}, 
we obtain 
\[
\setd=\bigcup_{C, \beta\in \qq_{>0}}\mathscr{S}(C, \beta). 
\]
Therefore, 
we conclude that 
$\setd$ 
is 
$F_{\sigma}$ 
in 
$\grsp$. 
\end{proof}

\begin{lem}\label{lem:setud}
The set 
$\setud$ 
is 
$F_{\sigma}$ and meager 
in 
$\grsp$. 
\end{lem}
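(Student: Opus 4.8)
The plan is to mirror the proof of Lemma \ref{lem:setd} almost verbatim, replacing the Assouad‑type quantitative characterization of the doubling property with the direct $\delta$‑uniform disconnectedness inequality. First I would recall that $\grsp\setminus\setud$ is dense by Lemma \ref{lem:denseuvw}, so once I show $\setud$ is $F_\sigma$ I am done. For each $\delta\in(0,1)$ let $\mathscr{S}(\delta)$ be the set of all $(X,d)\in\grsp$ that are $\delta$‑uniformly disconnected, i.e.\ for every non‑constant finite sequence $\{z_i\}_{i=1}^N$ in $X$ one has $\delta\, d(z_1,z_N)\le\max_{1\le i<N}d(z_i,z_{i+1})$. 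Then $\setud=\bigcup_{\delta\in\qq\cap(0,1)}\mathscr{S}(\delta)$, since a space is uniformly disconnected iff it is $\delta$‑uniformly disconnected for some rational $\delta\in(0,1)$ (monotonicity of the condition in $\delta$). So it suffices to prove each $\mathscr{S}(\delta)$ is closed in $(\grsp,\grdis)$.

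To prove closedness, take a sequence $\{(X_i,d_i)\}_{i\in\cind}$ in $\mathscr{S}(\delta)$ converging to $(X,d)$, and invoke Lemma \ref{lem:approx} to get a positive null sequence $\{\epsilon_i\}$ and $\epsilon_i$‑approximations $(f_i,g_i)$ between $(X_i,d_i)$ and $(X,d)$. Fix a non‑constant finite sequence $\{z_k\}_{k=1}^N$ in $X$; I want the inequality $\delta\, d(z_1,z_N)\le\max_k d(z_k,z_{k+1})$. Push the sequence into $X_i$ via $g_i$: the points $g_i(z_k)$ may fail to be a \emph{non‑constant} sequence, but for $i$ large enough the distortion control forces $g_i(z_1)\ne g_i(z_N)$ (because $d(z_1,z_N)>0$), hence the sequence $\{g_i(z_k)\}_k$ is non‑constant, and the $\delta$‑uniform disconnectedness of $(X_i,d_i)$ applies. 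That yields $\delta\, d_i(g_i(z_1),g_i(z_N))\le\max_k d_i(g_i(z_k),g_i(z_{k+1}))$. Using property (2) of an $\epsilon_i$‑approximation, $d_i(g_i(z_1),g_i(z_N))\ge d(z_1,z_N)-\epsilon_i$ and $d_i(g_i(z_k),g_i(z_{k+1}))\le d(z_k,z_{k+1})+\epsilon_i$, so
\[
\delta\bigl(d(z_1,z_N)-\epsilon_i\bigr)\le\max_{1\le k<N}d(z_k,z_{k+1})+\epsilon_i.
\]
Letting $i\to\infty$ gives $\delta\, d(z_1,z_N)\le\max_k d(z_k,z_{k+1})$. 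Since the finite sequence was arbitrary, $(X,d)\in\mathscr{S}(\delta)$, so $\mathscr{S}(\delta)$ is closed.

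The only mildly delicate point — and the step I would be most careful about — is handling the degenerate case in the approximation argument: one must notice that $\{g_i(z_k)\}_k$ need only be non‑constant (not injective) for the defining inequality of $\mathscr{S}(\delta)$ to be invoked, and that non‑constancy of the \emph{whole} sequence is guaranteed by $g_i(z_1)\ne g_i(z_N)$, which follows from $d_i(g_i(z_1),g_i(z_N))>d(z_1,z_N)-\epsilon_i>0$ for $i$ large. Everything else is the same bookkeeping as in Lemma \ref{lem:setd}. Finally, combining closedness of the $\mathscr{S}(\delta)$ with the countable union over rational $\delta$ shows $\setud$ is $F_\sigma$, and density of the complement (Lemma \ref{lem:denseuvw}) makes it meager, completing the proof.
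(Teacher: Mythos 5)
Your proposal is correct and follows essentially the same route as the paper: density of the complement via Lemma \ref{lem:denseuvw}, the decomposition $\setud=\bigcup_{\delta\in\qq\cap(0,1)}\mathscr{S}(\delta)$, and closedness of each $\mathscr{S}(\delta)$ via $\epsilon_i$-approximations from Lemma \ref{lem:approx}. Your explicit handling of the non-constancy of $\{g_i(z_k)\}_k$ and the careful passage between $d_i$ and $d$ are slightly more detailed than the paper's write-up, but the argument is the same.
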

\begin{proof}
Since 
$\grsp\setminus \setud$ 
is dense 
(see Lemma \ref{lem:denseuvw}), 
it suffices to show that 
$\setud$ 
is 
$F_{\sigma}$. 
For
 $\delta\in (0, 1)$, 
we denote by 
$\mathscr{S}(\delta)$ 
the 
set of all 
$\delta$-uniformly disconnected compact metric spaces. 
We now prove that each 
$\mathscr{S}(\delta)$ 
is closed in 
$\grsp$. 
Take a convergent sequence 
$\{(X_{i}, d_{i})\}_{i\in \cind}$ 
in 
$\mathscr{S}(\delta)$, 
and let $(X, d)$ be the its limit compact metric  space. 
Then, 
by Lemma \ref{lem:approx}, 
 there exist 
 a positive  sequence  
 $\{\epsilon_{i}\}_{i\in \cind}$ 
 converging  to 
 $0$, 
and  sequences 
$\{f_{i}: (X_{i}, d_{i})\to (X, d)\}_{i\in \cind}$ 
and 
$\{g_{i}: (X, d)\to (X_{i}, d_{i})\}_{i\in \cind}$
such that for each 
$i\in \cind$ 
the pair 
$(f_{i}, g_{i})$ 
is an 
$\epsilon_{i}$-approximation. 
Take a finite non-constant sequence 
$\{z_{i}\}_{i=1}^{N}$ 
in 
$(X, d)$. 
For a sufficiently large 
$i\in \cind$, 
the sequence 
$\{g_{i}(z_{k})\}_{k=1}^{N}$ 
is non-constant. 
Since 
$(X_{i}, d_{i})$ 
is 
$\delta$-uniformly disconnected,  
we obtain 
\[
\delta d(g_{i}(z_1),g_{i}( z_N))\le 
\max_{1\le k\le N}d(g_{i}(z_k), g_{i}(z_{k+1})).
\]
By letting 
$i\to \infty$, 
we obtain 
$ \delta d(z_1, z_N)\le \max_{1\le i\le N}d(z_i, z_{i+1})$. 
This implies that  
$(X, d)\in \mathscr{S}(\delta)$. 
Since 
\[
\setud=\bigcup_{\delta\in \qq\cap (0, 1)}\mathscr{S}(\delta), 
\]
we conclude that 
$\setud$ is 
$F_{\sigma}$ 
in 
$\grsp$. 
\end{proof}

\begin{lem}\label{lem:setup}
The set 
$\setup$ 
is 
$F_{\sigma}$ 
and meager 
in 
$\grsp$. 
\end{lem}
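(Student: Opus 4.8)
The plan is to follow the pattern of the proofs of Lemmas \ref{lem:setd} and \ref{lem:setud}. Since $\grsp\setminus\setup$ is dense in $\grsp$ by Lemma \ref{lem:denseuvw}, it suffices to show that $\setup$ is $F_{\sigma}$. For each $c\in(0,1)$ I would let $\mathscr{S}(c)$ be the set of all $c$-uniformly perfect compact metric spaces in $\grsp$. First, observe that if $(X,d)$ is $c$-uniformly perfect and $0<c'\le c$, then $(X,d)$ is also $c'$-uniformly perfect, since a witness $y$ for a pair $(x,r)$ satisfies $c'r\le cr\le d(x,y)\le r$; consequently
\[
\setup=\bigcup_{c\in\qq\cap(0,1)}\mathscr{S}(c),
\]
which exhibits $\setup$ as a countable union. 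Hence it is enough to prove that each $\mathscr{S}(c)$ is closed in $(\grsp,\grdis)$.

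For the closedness, I would take a sequence $\{(X_{i},d_{i})\}_{i\in\cind}$ in $\mathscr{S}(c)$ converging to $(X,d)$ and invoke Lemma \ref{lem:approx} to obtain a positive sequence $\{\epsilon_{i}\}_{i\in\cind}$ converging to $0$ together with $\epsilon_{i}$-approximations $(f_{i},g_{i})$ between $(X_{i},d_{i})$ and $(X,d)$. Fix $x\in X$ and $r\in(0,\delta_{d}(X))$. The approximation inequalities give $\delta_{d}(X)\le\delta_{d_{i}}(X_{i})+\epsilon_{i}$, so $r<\delta_{d_{i}}(X_{i})$ for all sufficiently large $i$; applying $c$-uniform perfectness of $(X_{i},d_{i})$ to the point $g_{i}(x)$ and the radius $r$ yields $y_{i}\in X_{i}$ with $cr\le d_{i}(g_{i}(x),y_{i})\le r$. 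Pushing $y_{i}$ forward by $f_{i}$ and using the defining properties of an $\epsilon_{i}$-approximation gives $cr-2\epsilon_{i}\le d(x,f_{i}(y_{i}))\le r+2\epsilon_{i}$. By compactness of $X$, along a subsequence $f_{i}(y_{i})$ converges to some $y\in X$, and letting $i\to\infty$ produces $cr\le d(x,y)\le r$. Thus $(X,d)$ is $c$-uniformly perfect, so $\mathscr{S}(c)$ is closed, and the displayed decomposition shows that $\setup$ is $F_{\sigma}$ and therefore meager.

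The manipulations with $\epsilon_{i}$-approximations are routine. The step that needs genuine care is the passage to the limit near the outer scale: one must secure the inequality $r<\delta_{d_{i}}(X_{i})$ before invoking uniform perfectness of $X_{i}$ (convergence of diameters under $\grdis$ takes care of this), and one must use sequential compactness of $X$ to extract the limiting witness $y$, since $f_{i}(y_{i})$ need not converge on the nose. I expect this extraction, rather than any substantive difficulty, to be the main point to handle correctly.
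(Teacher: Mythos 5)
Your proof is correct and follows essentially the same route as the paper: write $\setup$ as a countable union over rational constants of closed sets defined by quantitative uniform perfectness, and prove closedness via $\epsilon$-approximations together with extraction of a convergent subsequence of witnesses $f_{i}(y_{i})$. The only cosmetic difference is that the paper indexes its closed sets by two parameters $\mathscr{S}(c,t)$, replacing the diameter-dependent radius range $(0,\delta_{d}(X))$ by a fixed cutoff $(0,t)$, whereas you keep the range $(0,\delta_{d}(X))$ and control the convergence of diameters explicitly; both versions go through.
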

\begin{proof}
Since 
$\grsp\setminus \setup$ 
is dense 
(see Lemma \ref{lem:denseuvw}), 
it suffices to show that 
$\setup$ 
is 
$F_{\sigma}$. 
For 
$c\in (0, 1)$ 
and 
$t\in (0, \infty)$, 
let 
$\mathscr{S}(c, t)$ 
be the 
set of all metric spaces 
$(X, d)$ 
such that 
for all
 $x\in X$ 
 for all 
 $r\in (0, t)$
there exists 
$y\in X$ 
with 
$cr\le d(x, y)\le r$. 
We  prove that 
$\mathscr{S}(c, t)$ 
is 
closed in 
$\grsp$. 
Take a convergent sequence
 $\{(X_{i}, d_{i})\}_{i\in \cind}$ 
 in 
 $\mathscr{S}(c, t)$, 
 and let 
 $(X, d)$ 
 be its limit compact metric space. 
 Then, 
 by Lemma \ref{lem:approx}, 
  there exist a positive sequence 
$\{\epsilon_{i}\}_{i\in \cind}$ 
converging  to 
$0$, 
and  sequences 
$\{f_{i}: (X_{i}, d_{i})\to (X, d)\}_{i\in \cind}$ 
and 
$\{g_{i}: (X, d)\to (X_{i}, d_{i})\}_{i\in \cind}$
such that for each 
$i\in \cind$ 
the pair 
$(f_{i}, g_{i})$ 
is an 
$\epsilon_{i}$-approximation. 
 Take  arbitrary 
 $z\in X$, 
and for each 
$n\in \cind$ 
put 
$x_{i}=g_{i}(z)$. 
Then 
$x_{i}\in X_{i}$ 
and
$d(f_{i}(x_{i}), z)\le \epsilon_{i}$. 
By 
$(X_{i}, d_{i})\in \mathscr{S}(c, t)$, 
there exists 
$y_{i}$ 
with 
$cr\le d_n(x_{i}, y_{i})\le r$. 
Combining these inequalities and 
$|d_n(x_{i}, y_{i})-d(f_{i}(x_{i}), f_{i}(y_{i}))|\le \epsilon_{i}$, 
we obtain
\begin{align}\label{eq:up}
cr-2\epsilon_{i}\le 
d(z, f_{i}(y))
\le r+2\epsilon_{i}. 
\end{align}
By extracting a subsequence if necessary, 
we may assume that the sequence 
$\{f_{i}(y_{i})\}_{i\in \cind}$ 
is convergent.  
Let 
$w$ 
be its limit point. 
By letting 
$n\to \infty$, 
 by (\ref{eq:up}), 
we obtain 
$cr\le d(z, w)\le r$. 
This implies that 
$(X, d)\in \mathscr{S}(c, t)$, 
and hence 
$\mathscr{S}(c, t)$ 
is closed in 
$\grsp$. 
Since 
\[
\setup=
\bigcup_{c\in \qq\cap (0, 1), t\in \qq_{>0}}\mathscr{S}(c, t), 
\]
we conclude that 
$\setup$ 
is 
$F_{\sigma}$. 
\end{proof}

We now prove 
Theorems \ref{thm:typicality}
and \ref{thm:topdisuvw}. 
\begin{proof}[Proof of Theorem \ref{thm:typicality}]
By combining 
Lemmas \ref{lem:setd}, \ref{lem:setud}, 
and 
\ref{lem:setup}, 
we obtain Theorem \ref{thm:typicality}. 
\end{proof}

\begin{proof}[Proof of Theorem  \ref{thm:topdisuvw}]
From Theorem \ref{thm:typicality}  
and Lemma \ref{lem:denseuvw}, 
and  the fact that 
the intersection of 
an 
$F_{\sigma}$ 
set and a 
$G_{\delta}$ 
set of a metric space is 
$F_{\sigma\delta}$ 
and 
$G_{\delta\sigma}$, 
Theorem \ref{thm:topdisuvw} follows. 
\end{proof}


\section{Constructions of metric spaces}\label{sec:const}

In this section, 
we prepare constructions of metrics spaces 
to 
prove Theorem \ref{thm:q-emb}.

Let 
$\mathcal{X}=\{(X_{i},d_{i})\}_{i\in \cind}$ 
be 
a  family of metric spaces
satisfying the inequality
$\delta_{d_{i}}(X_{i})\le 2^{-i-1}$ for all $i\in \cind$. 
We put
\[
T(\mathcal{X})=\{\infty\}\sqcup \coprod_{i\in \cind}X_i, 
\]
and define a symmetric function 
$d_{\mathcal{X}}: (T(\mathcal{X}))^{2}\to [0, \infty)$ by 
\[
d_{\mathcal{X}}(x,y)=
	\begin{cases}
		d_i(x,y) & \text{if $x,y\in X_i$ for some $i$,}\\
		|2^{-i} -2^{-j}| & \text{if $x\in X_i,y\in X_j$ for some $i\neq j$, }\\
		2^{-i} & \text{if $x=\infty, y\in X_i$ for some $i$.}
	\end{cases}
\]
Then 
$d_{\mathcal{X}}$ 
is a metric on 
$T(\mathcal{X})$. 
This construction is 
 a specific version of  
 the telescope space defined in 
\cite{Ishiki2019}. 
The space 
$(T(\mathcal{X}), d_{\mathcal{X}})$ 
is the 
same as
 the telescope space 
$\left(T(\mathcal{X}, \mathcal{R}), d_{(\mathcal{X}, \mathcal{R})}\right)$, 
where 
$\mathcal{R}$ 
is the telescope base defined in 
\cite[Definition 3.2]{Ishiki2019}. 
The symbol 
$\mathcal{R}$ 
is a pair of the space 
$\{0\}\cup\{\, 2^{-n}\mid n\in\cind\, \}$ 
with the Euclidean metric  
and its numbering map. 
We review the basic properties of this construction. 

\begin{prop}\label{prop:teleprop}
Let $\mathcal{X}=\{(X_{i}, d_{i})\}_{i\in \cind}$ 
be a sequence of metric spaces. 
with 
$\delta_{d_{i}}(X_{i})\le 2^{-i-1}$. 
Then, 
the following hold true. 
\begin{enumerate}
\item 
If there exists 
$N\in \cind$ 
such that each 
$(X_{i}, d_{i})$ 
is 
$N$-doubling, 
then 
$(T(\mathcal{X}), d_{\mathcal{X}})$
is doubling. 
\item 
If there exists  
$\delta\in (0, 1)$ 
such that 
each 
$(X_{i}, d_{i})$ 
is 
$\delta$-uniformly disconnected, 
then 
$(T(\mathcal{X}), d_{\mathcal{X}})$
is uniformly disconnected. 
\item 
If there exist 
$c\in (0, 1)$ 
and 
$M\in (0, \infty)$ 
such that each 
$(X_{i}, d_{i})$ 
is 
$c$-uniformly perfect and 
satisfies 
$M\cdot 2^{-i}\le \delta_{d_{i}}(X_{i})$, 
then 
$(T(\mathcal{X}), d_{\mathcal{X}})$
is uniformly perfect. 
\end{enumerate}
\end{prop}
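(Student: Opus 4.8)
The plan is to treat the telescope space $(T(\mathcal{X}), d_{\mathcal{X}})$ as a disjoint union of the pieces $X_i$ together with the apex $\infty$, and in each of the three cases reduce the problem to the hypotheses on the pieces by analysing the three types of pairs of points: two points in the same $X_i$, two points in distinct $X_i$ and $X_j$, and one point equal to $\infty$. Since the metric $d_{\mathcal{X}}$ behaves on the ``base'' like the Euclidean metric on $\{0\}\cup\{2^{-n}\mid n\in\cind\}$, the second and third types of pairs are governed by the one-dimensional geometry of this base set, which is itself doubling, uniformly disconnected, and — crucially — \emph{not} uniformly perfect near $0$; this last fact is exactly why the extra hypothesis $M\cdot 2^{-i}\le\delta_{d_i}(X_i)$ is needed in part (3).

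For part (1): I would verify the characterisation of Lemma \ref{lem:doublingprop}, i.e.\ exhibit $\beta, C$ with $\card(A)\le C(\delta_{d_{\mathcal{X}}}(A)/\alpha_{d_{\mathcal{X}}}(A))^{\beta}$ for every finite $A$. Given a finite $A$, split it as $A=\bigcup_i (A\cap X_i)\cup(A\cap\{\infty\})$; the set of indices $i$ for which $A\cap X_i\neq\emptyset$ lives in the doubling base, so by Lemma \ref{lem:finAssouad} applied to the base $\times$ a uniform bound, only $O((\delta/\alpha)^{\beta_0})$ such indices can occur at a given scale, while each $A\cap X_i$ has at most $N$-doubling–many points at that scale. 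Alternatively, and more cleanly, one can invoke directly the general telescope results of \cite{Ishiki2019}: since $(T(\mathcal{X}), d_{\mathcal{X}})$ is literally the telescope space $T(\mathcal{X},\mathcal{R})$ for the fixed telescope base $\mathcal{R}$, the corresponding preservation statements there (on which this Proposition is a specialisation) give (1) and (2) immediately. I would state it that way and only sketch the self-contained scale argument as a remark.

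For part (2), uniform disconnectedness, the key computation is a chain argument: given a non-constant finite sequence $\{z_k\}_{k=1}^{m}$ in $T(\mathcal{X})$, I want $\delta' d_{\mathcal{X}}(z_1,z_m)\le\max_k d_{\mathcal{X}}(z_k,z_{k+1})$ for some fixed $\delta'\in(0,1)$. If all $z_k$ lie in a single $X_i$ this is the hypothesis with constant $\delta$. Otherwise the chain must pass through points in different pieces or through $\infty$, and then $\max_k d_{\mathcal{X}}(z_k,z_{k+1})$ is bounded below by a ``jump'' in the base, i.e.\ by a term of the form $|2^{-i}-2^{-j}|$ or $2^{-i}$; one checks this is comparable to $d_{\mathcal{X}}(z_1,z_m)$ up to an absolute constant since $\delta_{d_i}(X_i)\le 2^{-i-1}$ keeps the within-piece diameters dominated by the base gaps. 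Combining the two cases, $\delta'=\delta/2$ (say) works. For part (3), I would use Lemma \ref{lem:unionup}-style reasoning together with Lemma \ref{lem:produp} is not available here, so instead argue directly: given $x\in T(\mathcal{X})$ and $0<r<\delta_{d_{\mathcal{X}}}(T(\mathcal{X}))$, find $y$ with $c' r\le d_{\mathcal{X}}(x,y)\le r$. When $r$ is small compared to the within-piece scale containing $x$, use $c$-uniform perfectness of that $X_i$; when $r$ is large, use the condition $M\cdot 2^{-i}\le\delta_{d_i}(X_i)$ to guarantee that $x$'s own piece is big enough that moving to another piece (or toward $\infty$) realises a distance in the window $[c' r, r]$ — here one needs that consecutive base values $2^{-i}$ are within a bounded ratio, which they are.

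The main obstacle is part (3): uniform perfectness is not hereditary under the telescope construction without the diameter lower bound, because near the apex $\infty$ the only available ``nearby'' points at scale $r$ are points of pieces $X_i$ with $2^{-i}\approx r$, and if those pieces were too small (relative to $2^{-i}$) there would be an annulus $\{y : c'r\le d(\infty,y)\le r\}$ meeting no piece substantially — so one could fail to find $y$. The hypothesis $M\cdot 2^{-i}\le\delta_{d_i}(X_i)$ is exactly calibrated to rule this out, and the delicate part of the proof is choosing $c'$ uniformly: it must simultaneously beat the geometric gap between consecutive $2^{-i}$ (a factor of $2$), the constant $c$ from within-piece perfectness, and the constant $M$. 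I expect $c'$ can be taken to be an explicit function of $c$ and $M$ such as $c'=cM/4$, and I would carry out the case split on the location of $x$ (in some $X_i$, or $x=\infty$) and on the size of $r$ relative to $2^{-i}$ to pin this down.
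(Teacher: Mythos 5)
Your proposal is correct and, in the form you say you would actually write it, coincides with the paper's proof: the paper disposes of (1), (2), and (3) by citing Propositions 3.8, 3.9, and 3.10 of \cite{Ishiki2019}, exactly as you suggest, since $(T(\mathcal{X}), d_{\mathcal{X}})$ is the telescope space over the fixed base $\mathcal{R}$. Your supplementary self-contained sketches (the chain argument for (2) and the scale case-split with the lower diameter bound for (3)) are consistent with how those cited results go, so there is no gap to report.
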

\begin{proof}
The statements (1), (2), and (3)
follow from 
 Proposition 3.8, 
 Proposition 3.9, 
 Proposition 3.10 in 
 \cite{Ishiki2019}, respectively. 
\end{proof}

We denote by 
$2^{\omega}$ 
the set of all maps from 
$\zz_{\ge0}$ 
into 
$\{0, 1\}$. 
We define  
 $v: 2^{\omega}\times 2^{\omega}\to [0, \infty]$ 
by 
$v(x, y)=\min\{\, n\in \zz_{\ge 0}\mid x(n)\neq y(n)\, \}$ if 
$x\neq y$; 
otherwise 
$v(x, y)=\infty$. 
For each 
$c\in (0, 1)$, 
we define 
$g_{c}:\zz_{\ge 0}\cup\{\infty\}\to [0, \infty)$
by 
$g_{c}(n)=c^{n}$ 
if 
$n\in \zz_{\ge 0}$ 
and 
$g_{c}(\infty)=0$.  
We define a metric 
$\beta_{c}$ 
on 
$2^{\omega}$ 
by 
$\beta_{c}(x, y)=g_{c}(v(x, y))$.
For every 
$c\in (0, 1)$, 
the metric space 
$(2^{\omega}, \beta_{c})$ 
is a Cantor space. 
This metric space is a specific version of 
a \emph{sequentially metrized 
 Cantor space} defined  in the author's paper  \cite{Ishiki2019}. 
 The following is  implicitly shown  in 
 the proof of \cite[Lemma 6.3]{Ishiki2019}. 
 \begin{lem}\label{lem:smdoubling}
 Let 
 $c\in (0, 1)$. 
 If 
 $N\in (0, \infty)$ 
 satisfies that for all 
 $k\in \cind$ 
 we have 
 $\card(\{\, n\in \cind\mid c^{k}/2\le c^{n}\le c^{k}\, \})
 \le N$,
 then 
 $(2^{\omega}, \beta_{c})$ 
 is 
 $(2^{N+1})$-doubling. 
 In particular, 
$(2^{\omega}, \beta_{c})$
 is 
 $(2^{\log2/\log c^{-1}+2})$-doubling. 
 \end{lem}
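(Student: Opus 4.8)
The plan is to exploit the ultrametric structure of $(2^\omega, \beta_c)$, in which closed balls coincide with cylinder sets, so that verifying the doubling property reduces to counting how many of the scales $c^n$ lie in a fixed dyadic range. For $x \in 2^\omega$ and $n \in \zz_{\ge 0}$, write $[x]_n = \{\, y \in 2^\omega \mid y(i) = x(i) \text{ for all } i < n \,\}$ for the cylinder of depth $n$ through $x$. The first thing I would record is the elementary fact that, since $\beta_c(x, y) = c^{v(x, y)}$ and $n \mapsto c^n$ is strictly decreasing, every closed ball is a cylinder: precisely, $B(y, r) = [y]_{m(r)}$ for all $y \in 2^\omega$ and all $r \in (0, \infty)$, where $m(r) = \min\{\, n \in \zz_{\ge 0} \mid c^n \le \min\{r, 1\} \,\}$ (so $B(y, r) = 2^\omega$ when $r \ge 1$). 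Moreover, for $a \le b$ the cylinder $[x]_a$ is the disjoint union of exactly $2^{b-a}$ cylinders of depth $b$.

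With this in hand, fix $x \in 2^\omega$ and $r \in (0, \infty)$; we must cover $B(x, r)$ by few balls of radius $r/2$. If $r \ge 2$ then $B(x, r/2) = 2^\omega \supseteq B(x, r)$ and a single ball does the job, so assume $r < 2$. Set $m = m(r)$ and $m' = m(r/2)$; then $m \le m'$ (since $r/2 \le \min\{r, 1\}$) and $m' \ge 1$ (since $c^0 = 1 > r/2$). By the previous paragraph $B(x, r) = [x]_m$, while every ball of radius $r/2$ is a cylinder of depth $m'$, so $B(x, r)$ is covered by the $2^{m' - m}$ cylinders of depth $m'$ contained in $[x]_m$. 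It remains to bound $m' - m$.

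Here I would argue that $m' - m \le N$. Assume $m' > m$ (otherwise there is nothing to prove) and put $k = \max\{m, 1\} \in \nn$. Using $c^{m'-1} > r/2 \ge c^k/2$ and $c^n \le c^k$ for all $n \ge k$, one checks that the $m' - m$ consecutive integers $\{m, m+1, \dots, m'-1\}$ — or $\{1, 2, \dots, m'\}$ in the degenerate case $m = 0$ — all belong to $\{\, n \in \nn \mid c^k/2 \le c^n \le c^k \,\}$, a set of at most $N$ elements by hypothesis; hence $m' - m \le N$. This shows $(2^\omega, \beta_c)$ is $(2^{N+1})$-doubling (with a little room to spare: $2^N$ balls already suffice). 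For the final assertion, the condition $c^k/2 \le c^n \le c^k$ is equivalent to $k \le n \le k + \log 2 / \log c^{-1}$, so $\{\, n \in \nn \mid c^k/2 \le c^n \le c^k \,\}$ has exactly $\lfloor \log 2/\log c^{-1} \rfloor + 1 \le \log 2/\log c^{-1} + 1$ elements for every $k \in \nn$; applying the first part with $N = \lfloor \log 2/\log c^{-1} \rfloor + 1$ gives the $(2^{\log 2/\log c^{-1} + 2})$-doubling bound.

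The argument is essentially routine once balls are recognized as cylinders; the only place that needs a little care — and this is the main, rather mild, obstacle — is matching the two cylinder depths $m$ and $m'$ to the combinatorial quantity $N$: one must select the index $k$ in the hypothesis correctly (namely $k = m$, forced to be $k = 1$ in the boundary case $m = 0$, i.e.\ $1 \le r < 2$, where $B(x, r)$ is all of $2^\omega$) and verify that the \emph{whole} block of integers between the two depths lands in the level set $\{\, n \mid c^k/2 \le c^n \le c^k \,\}$.
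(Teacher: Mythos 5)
Your proof is correct. Note that the paper itself gives no argument for this lemma---it only remarks that the statement is ``implicitly shown'' in the proof of Lemma~6.3 of the author's earlier paper \cite{Ishiki2019}---so there is no in-text proof to compare against; your self-contained argument (closed balls in the ultrametric $\beta_{c}$ are cylinders, so doubling reduces to counting the scales $c^{n}$ in a dyadic window $[c^{k}/2,c^{k}]$, handled correctly including the boundary case $m=0$ via $k=1$) is exactly the natural route, and it even yields the slightly sharper constant $2^{N}$.
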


\begin{lem}\label{lem:smprop}
Let 
$c\in (0, 1)$. 
Then 
\begin{enumerate}
\item 
The space 
$(2^{\omega}, \beta_{c})$ 
is 
$(2^{\log2/\log c^{-1}+2})$-doubling.
\item 
The space 
$(2^{\omega}, \beta_{c})$
 is 
an ultrametric space.
\item 
The space 
$(2^{\omega}, \beta_{c})$ 
is 
$c$-uniformly perfect.
\item 
For all 
$x\in 2^{\omega}$, 
and 
for all 
$r\in (0, \infty)$, 
we have\label{item:daidai}
 \[
 \dim_{A}(B(x, r), \beta_{c})=\log 2/\log c^{-1}. 
\]

\end{enumerate}
\end{lem}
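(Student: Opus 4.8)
The plan is to treat the four assertions one at a time; the first three are short consequences of the definitions, while the fourth calls for an explicit two-scale estimate of the Assouad dimension. Assertion~(1) is precisely the last sentence of Lemma~\ref{lem:smdoubling}, so I would just cite it. For assertion~(2) I would verify the strong triangle inequality directly: for all $x,y,z\in 2^{\omega}$ one has $v(x,z)\ge v(x,y)\land v(y,z)$, because on every coordinate below $v(x,y)\land v(y,z)$ the point $x$ agrees with $y$ and $y$ agrees with $z$, hence $x$ agrees with $z$; since $c\in(0,1)$ the function $g_{c}$ is non-increasing on $\zz_{\ge 0}\cup\{\infty\}$, so $\beta_{c}(x,z)=g_{c}(v(x,z))\le g_{c}(v(x,y)\land v(y,z))=\beta_{c}(x,y)\lor\beta_{c}(y,z)$, and positive-definiteness and symmetry of $\beta_{c}$ are immediate. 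For assertion~(3) I would first observe $\delta_{\beta_{c}}(2^{\omega})=g_{c}(0)=1$, since $v\ge 0$ and the value $0$ is attained; then, given $x\in 2^{\omega}$ and $r\in(0,1)$, I set $n=\min\{\,k\in\zz_{\ge 0}\mid c^{k}\le r\,\}$ (so $n\ge 1$ and $c^{n}\le r<c^{n-1}$) and take $y$ to agree with $x$ on the coordinates $0,\dots,n-1$ and to differ from $x$ at coordinate $n$, so that $\beta_{c}(x,y)=c^{n}$; from $c^{n}\le r$ and $c^{n}=c\cdot c^{n-1}>c\,r$ we get $c\,r\le\beta_{c}(x,y)\le r$, which is $c$-uniform perfectness.

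For assertion~(4) the plan is to reduce every ball to a rescaled copy of the whole space. In the ultrametric $\beta_{c}$, the ball $B(x,r)$ is all of $2^{\omega}$ when $r\ge 1$, and when $0<r<1$ it is the cylinder $\{\,y\mid y(k)=x(k)\text{ for }k<n\,\}$, where $n=\min\{k:c^{k}\le r\}$; the shift map $y\mapsto(y(n),y(n+1),\dots)$ is then an isometry from $(B(x,r),\beta_{c})$ onto $(2^{\omega},c^{n}\beta_{c})$. Since scaling a metric by a positive constant leaves the Assouad dimension unchanged (the ratios $\delta/\alpha$ are unaffected), it suffices to prove $\dim_{A}(2^{\omega},\beta_{c})=\log 2/\log c^{-1}$, which I would obtain from Lemma~\ref{lem:doublingprop}.

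For the upper bound, given a finite $A\subseteq 2^{\omega}$ with at least two points, put $p=\min_{y\ne y'\in A}v(y,y')$ and $q=\max_{y\ne y'\in A}v(y,y')$, so that $\delta_{\beta_{c}}(A)=c^{p}$ and $\alpha_{\beta_{c}}(A)=c^{q}$; all points of $A$ share their first $p$ coordinates and any two of them differ at some coordinate in $\{p,\dots,q\}$, hence $\card(A)\le 2^{q-p+1}$ while $\delta_{\beta_{c}}(A)/\alpha_{\beta_{c}}(A)=c^{-(q-p)}$, so for every $\beta$ with $c^{-\beta}\ge 2$, i.e. $\beta\ge\log 2/\log c^{-1}$, one has $\card(A)\le 2\,(\delta_{\beta_{c}}(A)/\alpha_{\beta_{c}}(A))^{\beta}$. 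For the matching lower bound I would test the definition on $A_{n}=\{\,y\in 2^{\omega}\mid y(k)=0\text{ for all }k\ge n\,\}$, where $\card(A_{n})=2^{n}$, $\delta_{\beta_{c}}(A_{n})=1$ and $\alpha_{\beta_{c}}(A_{n})=c^{n-1}$, so any admissible exponent $\beta$ must satisfy $2^{n}\le C\,c^{-(n-1)\beta}$ for all $n\in\cind$, forcing $\beta\ge\log 2/\log c^{-1}$. I expect the main obstacle to be getting the two-scale combinatorics of the upper bound exactly right --- above all the claim that a general finite set is confined, after removing the common prefix, to a block of $q-p+1$ binary coordinates --- while the remaining steps are routine.
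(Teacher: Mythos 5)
Your proposal is correct and follows essentially the same route as the paper: (1) by citing Lemma~\ref{lem:smdoubling}, (2) and (3) directly from the definition of $\beta_{c}$, and (4) by the self-similarity observation that every ball is isometric to a rescaled copy $(2^{\omega}, c^{n}\beta_{c})$ of the whole space. The only difference is that you additionally write out the two-sided estimate showing $\dim_{A}(2^{\omega},\beta_{c})=\log 2/\log c^{-1}$, a computation the paper leaves implicit (and which follows from the definition of the Assouad dimension rather than from Lemma~\ref{lem:doublingprop} as you cite); your prefix-counting argument for the upper bound and the test sets $A_{n}$ for the lower bound are both sound.
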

\begin{proof}
Lemma \ref{lem:smdoubling} implies 
the statement (1). 
The statements (2) and (3) follow from 
the definition of 
$\beta_{c}$. 
The statement (4) follows from 
the fact that 
$(B(x, r), \beta_{c})$ 
is isometric to 
$(2^{\omega}, c^{m} \beta_{c})$ 
for some 
$m\in \cind$. 
\end{proof}

\begin{df}\label{df:tail}
Let 
$\mathcal{P}=
\{(P_{i, j}, p_{i, j})\}_{i\in \{1, 2, 3\}, j\in \cind}$
be a sequence of compact metric spaces  indexed by 
$\{1, 2, 3\}\times \cind$. 
For $q\in \qcube$, 
we put 
$\mathcal{P}_{q}=(\mathcal{P}, q)$. 
We now define a metric space 
$(U(\mathcal{P}_{q}), e_{\mathcal{P}_{q}})$ 
induced from 
$\mathcal{P}_{q}$ 
by using the telescope construction. 
We define 
$l: \rr\to [0, \infty)$ 
by 
$l(x)=\sqrt{2-2\cos(x)}$, 
and 
define a homeomorphism 
$\theta: [0, 1]\to [\pi/6, \pi/3]$ 
by 
$\theta(t)=(\pi/6)(t+1)$. 
Assume that 
$\delta_{p_{i, j}}(P_{i, j})\le 2^{-j-1}l(\pi/6)$ for all $i\in \{1, 2, 3\}$ and $j\in \cind$. 
For 
$q=\{q_{i}\}_{i\in \cind}\in \qcube$ 
and  
$j\in \cind$, 
define 
a metric 
$e_{j, q}\in \met\left(\widehat{3}\right)$ 
by 
\[
e_{j, q}(a, b)=
\begin{cases}
2^{-j-1} & \text{if  
$\{a, b\}=\{1, 2\}$ or $ \{2, 3\}$;}\\
2^{-j-1}\cdot l(\theta(q_{j})) & 
\text{if $\{a, b\}=\{1, 3\}$. }
\end{cases}
\]
The metric space 
$\left(\widehat{3}, e_{j, q}\right)$
is the set of vertices of the isosceles triangle whose
apex angle is 
$\theta(q_{j})$ 
and 
whose length of the legs is 
$2^{-j-1}$. 
For each 
$j\in \cind$, 
we put 
$T_{j}=P_{1, j}\sqcup P_{2, j}\sqcup P_{3, j}$
and 
let 
$k_{j}$ 
be a metric stated in 
Lemma \ref{lem:amaldiam} 
induced from 
$\{(P_{i, j}, p_{i, j})\}_{i\in \{1, 2, 3\}}$ 
and 
$e_{j, q}\in \met\left(\widehat{3}\right)$. 
Put 
$\widetilde{\mathcal{P}_{q}}=\{(T_{j}, k_{j})\}_{j\in \cind}$. 
Then we have 
$\delta_{k_{j}}(T_{j})= 2^{-j-1}$. 
We define 
\[
(U(\mathcal{P}_{q}), e_{\mathcal{P}_{q}})
=\left(T\left(\widetilde{\mathcal{P}_{q}}\right), 
d_{\widetilde{\mathcal{P}_{q}}}\right). 
\]
\end{df}

\begin{figure}[h]
\begin{minipage}{0.45\columnwidth}
\centering 
\includegraphics[width=\columnwidth]{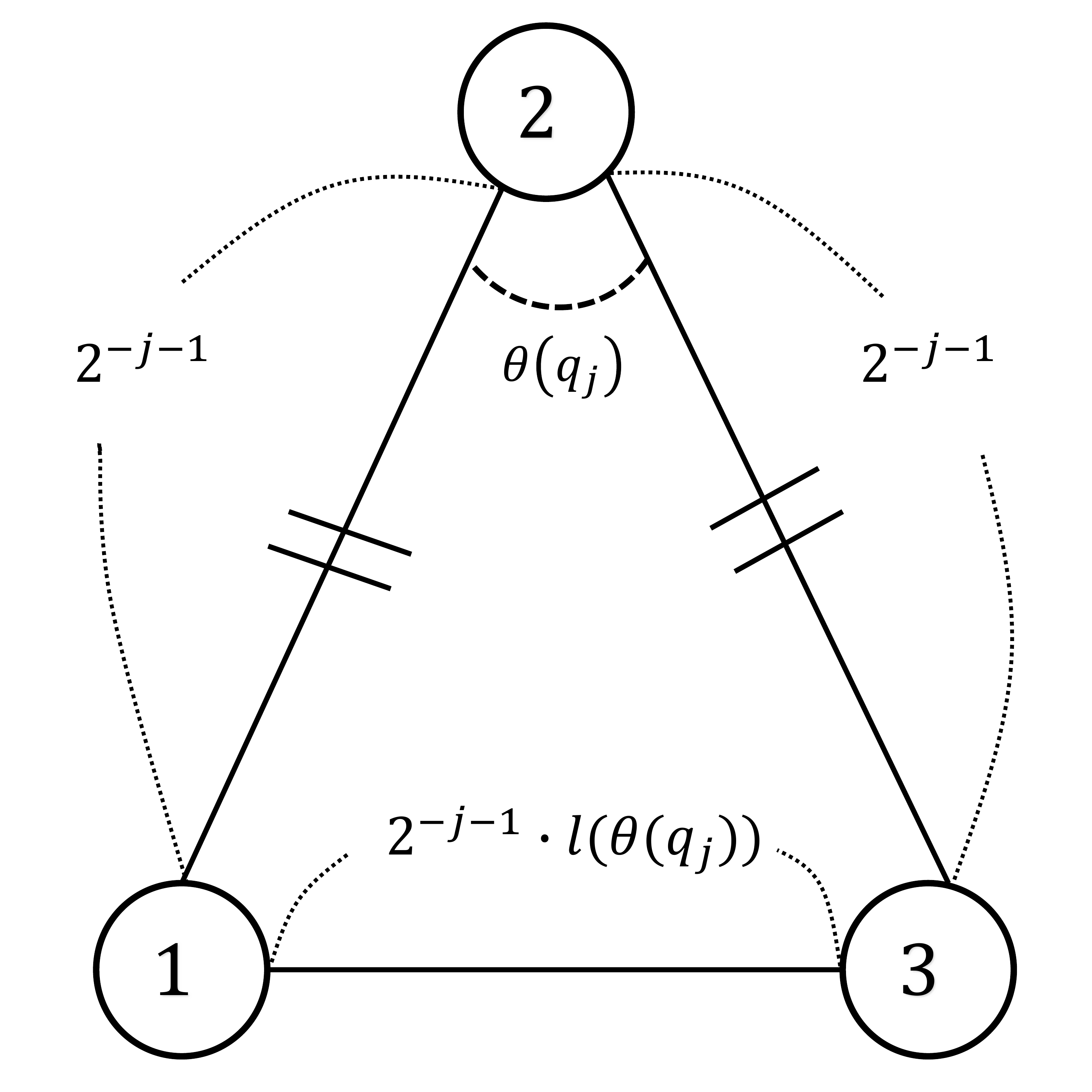}
\caption{$\left(\widehat{3}, e_{j, q}\right)$}
\end{minipage}
\begin{minipage}{0.45\columnwidth}
\centering 
\includegraphics[width=\columnwidth]{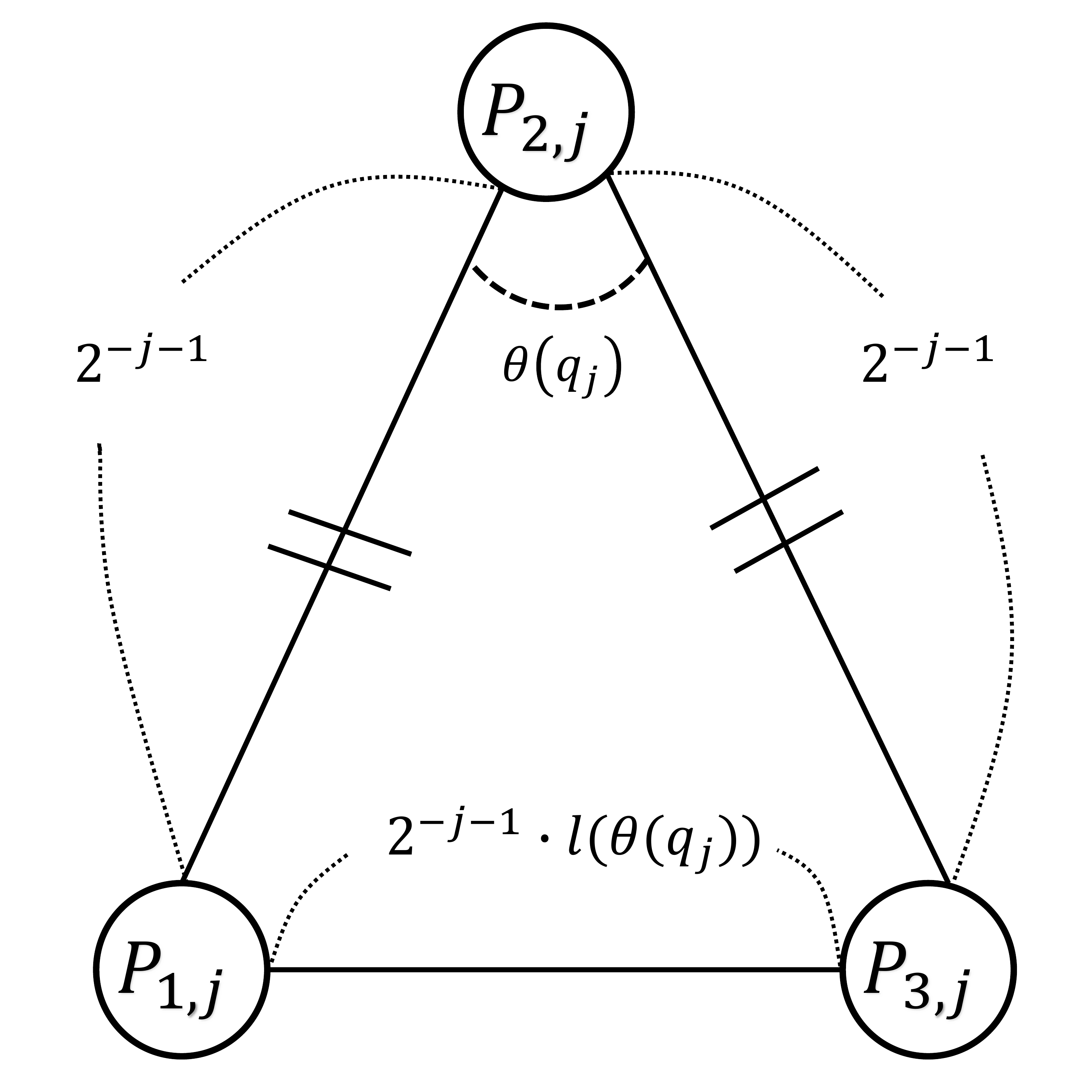}
\caption{$(T_{j}, k_{j})$
}
\end{minipage}
\end{figure}

\begin{figure}[h]
\centering 
\includegraphics[width=\columnwidth]{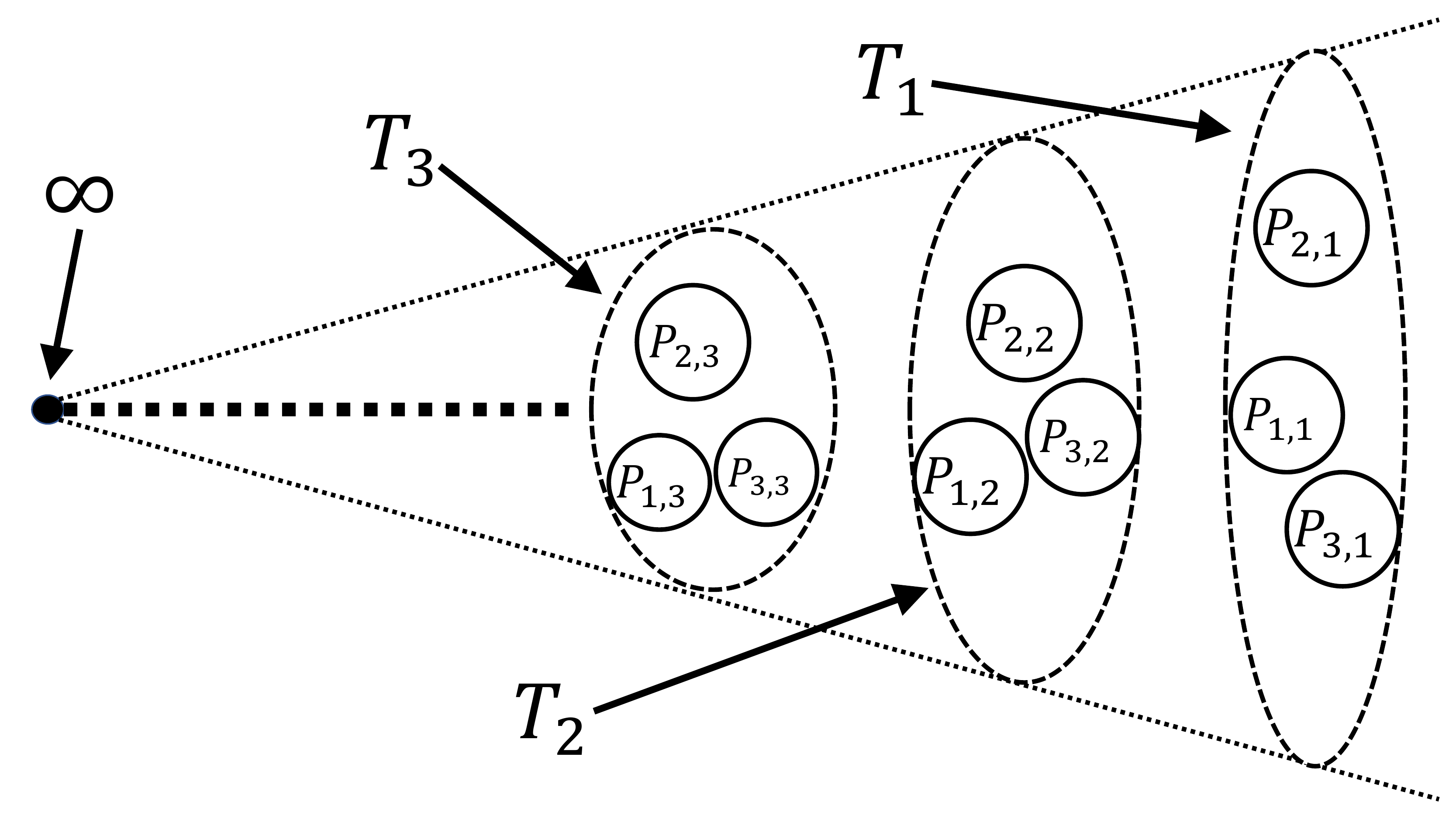}
\caption{$(U(\mathcal{P}_{q}), e_{\mathcal{P}_{q}})$ in Definition \ref{df:tail}
}
\end{figure}

By Proposition \ref{prop:teleprop}, 
we obtain:
\begin{prop}\label{prop:uprop}
Let $\mathcal{P}=\{(P_{i, j}, p_{i, j})\}_{i\in \{1, 2, 3\},  j\in \cind}$ be a sequence of metric spaces with 
$\delta_{p_{i, j}}(P_{i, j})\le 2^{-j-1}l(\pi/6)$. 
Let 
$q\in \qcube$. 
Then
we have:
\begin{enumerate}
\item 
If there exists 
$N\in \cind$ 
such that each 
$(P_{i, j}, p_{i, j})$ 
is 
$N$-doubling, 
then 
$(U(\mathcal{P}_{q}), e_{\mathcal{P}_{q}})$
is doubling. 
\item 
If there exists  
$\delta\in (0, 1)$ 
such that 
each 
$(P_{i, j}, p_{i, j})$  
is 
$\delta$-uniformly disconnected, 
then 
$(U(\mathcal{P}_{q}), e_{\mathcal{P}_{q}})$ 
is 
uniformly disconnected. 
\item 
If there exists 
$c\in (0, 1)$ 
such that 
each 
$(P_{i, j}, p_{i, j})$ 
is 
$c$-uniformly perfect, 
then 
$(U(\mathcal{P}_{q}), e_{\mathcal{P}_{q}})$ 
is uniformly perfect. 
\end{enumerate}
\end{prop}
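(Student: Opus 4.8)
The plan is to deduce all three implications by applying Proposition~\ref{prop:teleprop} to the sequence $\widetilde{\mathcal{P}_q}=\{(T_j,k_j)\}_{j\in\cind}$, noting that by Definition~\ref{df:tail} one has $(U(\mathcal{P}_q),e_{\mathcal{P}_q})=(T(\widetilde{\mathcal{P}_q}),d_{\widetilde{\mathcal{P}_q}})$ and $\delta_{k_j}(T_j)=2^{-j-1}$. The identity $\delta_{k_j}(T_j)=2^{-j-1}$ already supplies the diameter requirement $\delta_{d_i}(X_i)\le 2^{-i-1}$ of the telescope construction, and it shows that in part~(3) the lower bound $M\cdot 2^{-j}\le\delta_{k_j}(T_j)$ holds with $M=1/2$. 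Hence everything reduces to transferring, \emph{with constants independent of $j$}, the quantitative hypothesis from the family $\{P_{i,j}\}_{i\in\{1,2,3\}}$ to the amalgam $(T_j,k_j)$: $N$-doubling of all $P_{i,j}$ should give $N'$-doubling of $(T_j,k_j)$, $\delta$-uniform disconnectedness of all $P_{i,j}$ should give $\delta'$-uniform disconnectedness of $(T_j,k_j)$, and $c$-uniform perfectness of all $P_{i,j}$ should give $c'$-uniform perfectness of $(T_j,k_j)$.

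The key observation is that, by Lemma~\ref{lem:amaldiam} and the definition of $e_{j,q}$, the space $(T_j,k_j)$ is the disjoint union $P_{1,j}\sqcup P_{2,j}\sqcup P_{3,j}$ in which (i)~each piece has diameter at most $2^{-j-1}l(\pi/6)$, (ii)~any two points lying in distinct pieces are at distance $2^{-j-1}$ or $2^{-j-1}l(\theta(q_j))$, hence at distance at least $2^{-j-1}l(\pi/6)$, and (iii)~$\delta_{k_j}(T_j)=2^{-j-1}$; and, since $\theta(q_j)$ ranges over the compact interval $[\pi/6,\pi/3]$, all of these quantities are fixed multiples of $2^{-j-1}$ uniformly in $j$ and $q$. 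Using this I would argue as follows.
\begin{enumerate}
\item Given $x\in T_j$ and $r>0$: if $r<2^{-j-1}l(\pi/6)$ then $B(x,r)$ lies inside the single piece containing $x$, so the $N$-doubling of that piece covers $B(x,r)$ by $N$ balls of radius $r/2$; if $r\ge 2^{-j-1}l(\pi/6)$ then each of the three pieces is itself covered by $N$ balls of radius at most $\tfrac{1}{2}\cdot 2^{-j-1}l(\pi/6)\le r/2$, so $T_j$, hence $B(x,r)$, is covered by $3N$ such balls. Thus $(T_j,k_j)$ is $3N$-doubling, and Proposition~\ref{prop:teleprop}(1) applies.
\item A non-constant finite sequence $\{z_k\}_{k=1}^m$ in $T_j$ either stays in a single piece, in which case $\delta\cdot k_j(z_1,z_m)\le\max_k k_j(z_k,z_{k+1})$ by the $\delta$-uniform disconnectedness of that piece, or it contains a consecutive pair in distinct pieces, in which case $\max_k k_j(z_k,z_{k+1})\ge 2^{-j-1}l(\pi/6)\ge l(\pi/6)\cdot k_j(z_1,z_m)$ since $k_j(z_1,z_m)\le 2^{-j-1}$. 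Thus $(T_j,k_j)$ is $\min\{\delta,l(\pi/6)\}$-uniformly disconnected, and Proposition~\ref{prop:teleprop}(2) applies.
\item Given $x\in P_{1,j}$ and $r\in(0,2^{-j-1})$, a case analysis on the size of $r$ relative to $\delta_{p_{1,j}}(P_{1,j})$ and to the inter-piece distance $2^{-j-1}l(\theta(q_j))$ — using the $c$-uniform perfectness of $P_{1,j}$ when $r$ is small, a point of the nearest other piece when $r$ is close to $2^{-j-1}$, and once more the $c$-uniform perfectness of $P_{1,j}$ together with the bounds~(i)--(iii) in between — produces a $y$ with $c'r\le k_j(x,y)\le r$, where $c'$ depends only on $c$ and $l(\pi/6)$. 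Thus $(T_j,k_j)$ is $c'$-uniformly perfect, and Proposition~\ref{prop:teleprop}(3) applies.
\end{enumerate}

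The main obstacle will be the uniformity in $j$: Proposition~\ref{prop:teleprop} demands a \emph{single} constant valid for every piece of the telescope, so the purely qualitative facts that a finite disjoint union of doubling (resp.~uniformly disconnected, resp.~uniformly perfect) spaces again has that property (Lemmas~\ref{lem:qssqcup} and \ref{lem:unionup}) do not by themselves suffice — one must verify that the amalgamation constants do not deteriorate as $j\to\infty$. This is exactly where the rigidity built into Definition~\ref{df:tail} enters: the diameter of $T_j$ is pinned at $2^{-j-1}$, and the apex angle is confined to $[\pi/6,\pi/3]$ so that $l(\theta(q_j))$ stays bounded away from $0$, whence the metric relation between the three pieces is, up to the scaling factor $2^{-j-1}$, the same for all $j$.
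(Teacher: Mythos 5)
Your overall strategy --- reduce everything to Proposition~\ref{prop:teleprop} applied to $\widetilde{\mathcal{P}_{q}}=\{(T_{j},k_{j})\}_{j\in\cind}$ and verify that the quantitative hypotheses transfer to the amalgams $(T_{j},k_{j})$ with constants uniform in $j$ and $q$ --- is exactly what the paper does (its proof is a bare citation of Proposition~\ref{prop:teleprop}), and your arguments for parts (1) and (2) are correct and complete: the dichotomy $r<2^{-j-1}l(\pi/6)$ versus $r\ge 2^{-j-1}l(\pi/6)$ gives $3N$-doubling of each $(T_{j},k_{j})$, and the dichotomy ``the sequence stays in one piece / has a consecutive jump between pieces'' gives $\min\{\delta,l(\pi/6)\}$-uniform disconnectedness.

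Part (3), however, has a genuine gap hiding in the middle case of your case analysis. Write $D=\delta_{p_{1,j}}(P_{1,j})$ and let $a\ge 2^{-j-1}l(\pi/6)$ be the distance from $P_{1,j}$ to the nearest other piece. For $r\in[D,a)$ every $y\neq x$ satisfies either $k_{j}(x,y)\le D$ (if $y\in P_{1,j}$) or $k_{j}(x,y)\ge a>r$ (otherwise), so a witness with $c'r\le k_{j}(x,y)\le r$ can exist only if $D\ge c'r$; letting $r$ increase toward $a$ forces $c'\le D/(2^{-j-1}l(\pi/6))$. The hypotheses provide only the \emph{upper} bound $D\le 2^{-j-1}l(\pi/6)$, so if $D/2^{-j}\to 0$ as $j\to\infty$ (take for $P_{i,j}$ a $c$-uniformly perfect set rescaled to diameter $4^{-j}$, or even a one-point space, which is vacuously $c$-uniformly perfect), then no uniform $c'$ exists, $(T_{j},k_{j})$ is not $c'$-uniformly perfect for any fixed $c'$, and in fact $(U(\mathcal{P}_{q}),e_{\mathcal{P}_{q}})$ fails to be uniformly perfect at all, since small balls around points of $P_{1,j}$ in the telescope coincide with small balls in $P_{1,j}$. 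So your claim that $c'$ depends only on $c$ and $l(\pi/6)$ cannot be achieved: one needs in addition a lower bound $\delta_{p_{i,j}}(P_{i,j})\ge M'\cdot 2^{-j}$ uniform in $i$ and $j$ --- the exact analogue of the hypothesis $M\cdot 2^{-i}\le\delta_{d_{i}}(X_{i})$ in Proposition~\ref{prop:teleprop}(3), which you correctly verified for the $T_{j}$ but which must also be imposed on the $P_{i,j}$. (This is really a missing hypothesis in the statement being proved; in the paper's application the pieces used for part (3) have diameter exactly $2^{-j-1}l(\pi/6)$, and with such a lower bound your middle case closes by applying $c$-uniform perfectness of $P_{1,j}$ at a radius just below $D$.)
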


The following proposition  states the continuity of 
$\{(U(\mathcal{P}_{q}), e_{\mathcal{P}_{q}})\}_{q\in \qcube}$.

\begin{prop}\label{prop:Uconti}
Let 
$\mathcal{P}=
\{(P_{i, j}, p_{i, j})\}_{i\in \{1, 2, 3\}, j\in \cind}$
be a sequence with 
$\delta_{p_{i, j}}(P_{i, j})\le 2^{-j-1}l(\pi/6)$.
Then the map $G:\qcube \to \grsp$ 
defined by 
$G(q)=(U(\mathcal{P}_{q}), e_{\mathcal{P}_{q}})$ 
is 
continuous. 
\end{prop}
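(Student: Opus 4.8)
The plan is to show that if $q^{(m)}\to q$ in $\qcube$, then $\grdis(G(q^{(m)}), G(q))\to 0$ by producing, for each $m$, an explicit approximation between $(U(\mathcal{P}_{q^{(m)}}), e_{\mathcal{P}_{q^{(m)}}})$ and $(U(\mathcal{P}_{q}), e_{\mathcal{P}_{q}})$ whose defect tends to $0$, and then invoking Lemma \ref{lem:approx}. The key observation is that the two telescope spaces $U(\mathcal{P}_{q^{(m)}})$ and $U(\mathcal{P}_{q})$ have \emph{the same underlying set}: both are $\{\infty\}\sqcup\coprod_{j\in\cind}T_j$ with $T_j = P_{1,j}\sqcup P_{2,j}\sqcup P_{3,j}$, and only the finitely-parametrized metric $e_{j,q}$ on the three-point skeleton $\widehat{3}$ inside each $T_j$ depends on $q$. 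So the natural candidate map is the identity map $\mathrm{id}$ on this common set, and I would estimate $|e_{\mathcal{P}_{q^{(m)}}}(x,y) - e_{\mathcal{P}_{q}}(x,y)|$ uniformly over all pairs $x,y$.

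The estimate splits according to where $x,y$ live, mirroring the case analysis in the definition of $d_{\mathcal{X}}$ and of $k_j$. If $x,y$ lie in the same $P_{i,j}$, the distance is $p_{i,j}(x,y)$, independent of $q$, so the difference is $0$. If $x,y$ lie in $T_j$ but in different pieces $P_{a,j}, P_{b,j}$, the distance is $e_{j,q}(a,b)$, and $|e_{j,q^{(m)}}(a,b) - e_{j,q}(a,b)|$ is either $0$ (for the pairs $\{1,2\},\{2,3\}$) or equals $2^{-j-1}|l(\theta(q^{(m)}_j)) - l(\theta(q_j))|$ for the pair $\{1,3\}$; since $l\circ\theta$ is uniformly continuous on $[0,1]$ and $2^{-j-1}\le 1/4$, this is bounded by $\tfrac14\,\omega(|q^{(m)}_j - q_j|)$ where $\omega$ is a modulus of continuity for $l\circ\theta$. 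For $x\in T_i$, $y\in T_j$ with $i\neq j$, or $x=\infty$, the distances are $|2^{-i}-2^{-j}|$ or $2^{-i}$, which do not depend on $q$ at all (the telescope gluing distances are fixed), so the difference is again $0$. Therefore
\[
\sup_{x,y}\bigl|e_{\mathcal{P}_{q^{(m)}}}(x,y) - e_{\mathcal{P}_{q}}(x,y)\bigr| \le \frac14\sup_{j\in\cind}\omega\!\left(\bigl|q^{(m)}_j - q_j\bigr|\right).
\]

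The main obstacle is that convergence $q^{(m)}\to q$ in the product topology of $\qcube$ gives coordinatewise convergence $q^{(m)}_j\to q_j$ for each fixed $j$, but not uniformity in $j$, so the right-hand side above need not go to $0$. This is handled by the decay factor $2^{-j-1}$: for a given $\epsilon>0$, choose $J$ with $2^{-J-1}\cdot\di(l\circ\theta([0,1])) < \epsilon$ — more simply, $2^{-J}<\epsilon$ — so that for all $j>J$ the $\{1,3\}$-difference in the $j$-th triangle is automatically below $\epsilon$ regardless of $q^{(m)}$; then for the finitely many $j\le J$, coordinatewise convergence gives $m_0$ such that $m\ge m_0$ forces $\tfrac14\,\omega(|q^{(m)}_j-q_j|)<\epsilon$ for all $j\le J$. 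Thus the supremum is $<\epsilon$ for $m\ge m_0$. Setting $\epsilon_m = \sup_{x,y}|e_{\mathcal{P}_{q^{(m)}}}(x,y) - e_{\mathcal{P}_{q}}(x,y)|$, the identity map satisfies condition (1) of the definition of an $\epsilon_m$-approximation and is a bijection covering the target, so by the remark preceding Lemma \ref{lem:approx} there is a $3\epsilon_m$-approximation between the two spaces; since $\epsilon_m\to 0$, Lemma \ref{lem:approx} yields $\grdis(G(q^{(m)}), G(q))\to 0$. As $\qcube$ is metrizable, sequential continuity gives continuity of $G$.
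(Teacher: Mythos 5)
Your proof is correct and follows essentially the same route as the paper: both exploit that the underlying set of $U(\mathcal{P}_{q})$ is independent of $q$, observe that only the $\{1,3\}$ distances $2^{-j-1}l(\theta(q_{j}))$ vary, and control the head coordinates by uniform continuity of $l\circ\theta$ and the tail by the $2^{-j-1}$ decay. The only cosmetic difference is that you package the resulting uniform bound as an $\epsilon$-approximation via Lemma \ref{lem:approx} and argue with sequences, whereas the paper bounds the distortion of the trivial correspondence $\Delta_{X_{q}}$ over a basic neighborhood of $q$.
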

\begin{proof}
For 
$q\in \qcube$, 
put 
$(X_{q}, d_{q})=(U(\mathcal{P}_{q}), e_{\mathcal{P}_{q}})$.
Note that 
$X_{q}=X_{r}$ 
for all 
$q, r\in \qcube$. 
We fix 
$q\in \qcube$, 
and take arbitrary 
$\epsilon\in (0, \infty)$. 
Since 
$l\circ \theta: [0, 1]\to \rr$ 
is 
uniformly continuous, 
there exists 
$\delta\in (0, \infty)$ 
such that 
for all 
$s, t\in [0, 1]$ 
with 
$|s-t|\le \delta$, 
we obtain 
$|l\circ \theta(s)-l\circ \theta(t)|\le \epsilon$. 
Take 
$n\in \cind$ 
such that 
$2^{-n}\le \min\{\delta, \epsilon\}$. 
We denote by 
$V$ 
the set of all 
$r\in \qcube$ 
satisfying that 
for every 
$i\in \{1, 2, \dots, n\}$ 
we obtain 
$|q_{i}-r_{i}|\le 2^{-n}$. 
Note that 
$V$ 
is a neighborhood of
 $q$
  in 
  $\qcube$. 
We then estimate 
$\dis(\Delta_{X_{q}})$. 
By the definition of the metric space 
$(U(\mathcal{P}_{q}), e_{\mathcal{P}_{q}})$, 
for every 
$r\in V$, 
the quantity 
$|d_{q}(x, y)-d_{r}(x, y)|$ 
only takes values in 
the set 
$\{|l(\theta(q_{i}))-l(\theta(r_{i}))|\cdot 2^{-i-1}\mid i\in \cind\}$. 
If 
$i\in \{1, \dots, n\}$, by $|q_{i}-r_{i}|\le \delta$, 
we obtain 
$|l(\theta(q_{i}))-l(\theta(r_{i}))|\cdot 2^{-i-1}\le \epsilon$.
If 
$i> n$,   
using  
$l\circ \theta(x)\le 1$ 
for all 
$x\in [0, 1]$, 
we obtain  
$|l(\theta(q_{i}))-l(\theta(r_{i}))|\cdot 2^{-i-1}\le 
2\cdot 2^{-n}
\le
2 \epsilon$. 
This implies that 
$\dis(\Delta_{X_{q}})\le 2\epsilon$. 
Thus, we conclude that  for every 
$r\in V$, 
we obtain 
$\grdis(G(q), G(r))\le \epsilon$, 
and hence
$G$ 
is continuous. 
\end{proof}

A topological space is said to be \emph{perfect} if 
it has no isolated points. 
For a metric space 
$(X, d)$, 
and 
$v\in [0, \infty)$ 
we denote by 
$\aaa(v, X, d)$ 
the set of all 
$x\in X$ 
for which 
there exists 
$r\in (0, \infty)$ 
such that 
for every 
$\epsilon \in (0, r)$ 
we have 
$\dim_{A}(B(x, \epsilon), d)=v$. 
Note that if $(X, d)$ and $(Y, e)$ are isometric to each other, then so are $\aaa(v, X, d)$ and $\aaa(v, Y, e)$
for all $v\in [0, \infty)$. 

Under certain assumptions on  
a family 
$\mathcal{P}$, 
we can prove that 
the family 
$\{(U(\mathcal{P}_{q}), e_{\mathcal{P}_{q}})\}_{q\in \qcube}$ 
are not isometric to each other. 
\begin{prop}\label{prop:Uiso}
Let 
$\mathcal{P}=
\{(P_{i, j}, p_{i, j})\}_{i\in \{1, 2, 3\}, j\in \cind}$
be a sequence of compact metric spaces
 with 
$\delta_{p_{i, j}}(P_{i, j})\le 2^{-j-1}l(\pi/6)$. 
We assume that 
either of the following conditions holds true:
\begin{enumerate}

\item Each  
$(P_{i, j}, p_{i, j})$ 
is the  one-point metric space. \label{item:first}

\item 
For all $i\in \{1, 2, 3\}$ 
and 
$j\in \cind$, 
we have:
\begin{enumerate}
\item the space 
$(P_{i, j}, p_{i, j})$ 
is perfect;\label{item:aaa}
\item 
$\aaa(
\dim_{A}(P_{i, j}, p_{i, j}), 
P_{i, j}, p_{i, j}
)
=
(P_{i, j}, p_{i, j})$;\label{item:bbb}
\item the values
 $\{\dim_{A}(P_{i, j}, p_{i, j})\}_{i, \in\{1, 2, 3,\}, j\in \cind }$
 are different from  each other. \label{item:ccc}
\end{enumerate}
\label{item:second}

\end{enumerate}
Then for all  
$q, r\in \qcube$ 
with 
$q\neq r$, 
and 
for all 
$K, L\in (0, \infty)$, 
the spaces 
$(U(\mathcal{P}_{q}), K\cdot e_{\mathcal{P}_{q}})$ 
and 
$(U(\mathcal{P}_{r}), L\cdot e_{\mathcal{P}_{r}})$ 
are not isometric to each other. 
\end{prop}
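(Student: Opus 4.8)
The plan is to suppose, toward a contradiction, that there is an isometry
$\Phi:(U(\mathcal{P}_q),K\cdot e_{\mathcal{P}_q})\to(U(\mathcal{P}_r),L\cdot e_{\mathcal{P}_r})$
for some $q\neq r$ and $K,L\in(0,\infty)$. First I would argue that the scaling factors must coincide, i.e. $K=L$, by comparing an isometric invariant that is homogeneous of degree one under scaling. The natural choice is the diameter: $\delta_{K\cdot e_{\mathcal{P}_q}}(U(\mathcal{P}_q)) = K\cdot\delta_{e_{\mathcal{P}_q}}(U(\mathcal{P}_q)) = K$ (the telescope construction gives $\delta_{d_{\mathcal{X}}}(T(\mathcal{X}))=2^{-1}$ in the general set-up, and here the outer metric is the Euclidean metric on $\{0\}\cup\{2^{-n}\}$, so the diameter is $1/2$ independently of $q$), so $K=L$ at once; after rescaling we may assume $K=L=1$ and $\Phi$ is an isometry of $(U(\mathcal{P}_q),e_{\mathcal{P}_q})$ onto $(U(\mathcal{P}_r),e_{\mathcal{P}_r})$.

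Next I would pin down where $\Phi$ sends the distinguished point $\infty$. The point $\infty$ is characterized metrically inside $U(\mathcal{P}_q)$: it is the unique point $x$ such that the set of distances $\{e_{\mathcal{P}_q}(x,y):y\in U(\mathcal{P}_q)\}$ accumulates at $0$ only, while every other point lies in some block $T_j$ which has diameter $2^{-j-1}$ and sits at distance bounded below from the rest of the space except for its own small block — more precisely, $\infty$ is the unique point every neighbourhood of which fails to be contained in any single $T_j$, or one can use that $\infty$ is the unique non-isolated-from-infinitely-many-scales point. In case (1) this is transparent: every $T_j$ is a three-point space, so $U(\mathcal{P}_q)$ is countable with a single accumulation point, namely $\infty$, and $\Phi(\infty)=\infty$. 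In case (2), each $T_j$ is perfect (being a finite disjoint union of perfect spaces), hence $U(\mathcal{P}_q)\setminus\{\infty\}$ is locally an open subset of a perfect space; the point $\infty$ is the unique point at which the local Assouad dimension behaves differently, or more robustly: it is the unique point $x$ with $\alpha_{e_{\mathcal{P}_q}}(\{x\}\cup B(x,\varepsilon))\to 0$ witnessed by infinitely many ``plateaus''. In either case I would extract $\Phi(\infty)=\infty$, after which $\Phi$ must carry the $j$-th metric sphere about $\infty$ in the source (the set of points at distance exactly $2^{-j}$ from $\infty$, which is precisely $T_j$) to the $j$-th such sphere in the target, so $\Phi$ restricts to an isometry $T_j\to T_j$ for every $j$.

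Now fix $j$ with $q_j\neq r_j$; this is where the heart of the argument lies. The block $(T_j,k_j)$ consists of three pieces $P_{1,j},P_{2,j},P_{3,j}$, each of diameter $\le 2^{-j-1}l(\pi/6)<2^{-j-1}$, glued by the triangle metric $e_{j,q}$ with apex angle $\theta(q_j)$: the distances \emph{between} distinct pieces are $2^{-j-1}$ (for $\{1,2\}$ and $\{2,3\}$) and $2^{-j-1}l(\theta(q_j))$ (for $\{1,3\}$), and since $l\circ\theta$ is strictly monotone on $[0,1]$ these inter-piece distances strictly exceed the intra-piece distances. Hence the partition of $T_j$ into the three pieces is metrically intrinsic: it is the partition into equivalence classes of the relation ``$x\sim y$ iff $k_j(x,y)<2^{-j-1}l(\pi/6)$'' (in case (1) the three pieces are literally the three points). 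Therefore the isometry $\Phi|_{T_j}$ permutes $\{P_{1,j},P_{2,j},P_{3,j}\}$ by some permutation $\sigma$, and it induces an isometry of the triple $(\widehat 3,e_{j,q})$ onto $(\widehat 3,e_{j,r})$ via $\sigma$ — in case (2) one additionally uses condition (\ref{item:ccc}) that the Assouad dimensions $\dim_A(P_{i,j},p_{i,j})$ are pairwise distinct (together with Lemma~\ref{lem:subAssouad} and the invariance of local Assouad dimension, condition (\ref{item:bbb}), to see $\Phi$ maps each $P_{i,j}$ to a piece of the same dimension) to conclude $\sigma=\mathrm{id}$. But an isometry of one isosceles triangle onto another isosceles triangle preserves the multiset of side lengths; the two legs have common length $2^{-j-1}$ and the base has length $2^{-j-1}l(\theta(q_j))$ versus $2^{-j-1}l(\theta(r_j))$, so we are forced to have $l(\theta(q_j))=l(\theta(r_j))$ (the base is the side of the unique pair of vertices at distance $\neq 2^{-j-1}$ from each other, so it is matched to the base, not to a leg — using again the strict monotonicity of $l\circ\theta$ and the fact that $l(\theta(t))=2\sin(\theta(t)/2)\in(0,1)$ while the leg length in these normalized units is $1$, so base $\neq$ leg). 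Since $l\circ\theta$ is injective on $[0,1]$, this gives $q_j=r_j$, contradicting the choice of $j$. The main obstacle is the rigidity step in case (2): ensuring that the isometry cannot shuffle the three pieces in a way that ``rotates'' the roles of legs and base — this is exactly what conditions (\ref{item:aaa})--(\ref{item:ccc}) are designed to rule out, via the distinctness of local Assouad dimensions, and it is the one place where a little care with the invariance lemmas (Lemmas~\ref{lem:subAssouad}, \ref{lem:sop} and the isometric invariance of $\aaa(v,-,-)$) is genuinely needed rather than routine.
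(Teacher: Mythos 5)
Your proposal is correct and follows essentially the same route as the paper: fix $K=L$ and the distinguished point $\infty$, use the isometric invariance of the local Assouad dimension level sets $\aaa(v,\cdot)$ together with conditions (a)--(c) of case (2) to match each piece $P_{i,j}$ to itself, and then recover $q_j$ from the side lengths of $(\widehat{3}, e_{j,q})$; the paper merely packages case (2) as a reduction to case (1) by restricting the isometry to a one-point-per-piece skeleton $\{\infty\}\cup\{a_{i,j}\}$ instead of arguing block by block (thereby avoiding the need to show directly that $\infty$ is fixed in case (2)). The only spots wanting a little more care are the intrinsic characterization of $\infty$ in case (2) --- which your observation about the local Assouad dimension taking infinitely many values on every neighbourhood of $\infty$ already supplies --- and the parenthetical claim $l(\theta(t))\in(0,1)$, which fails at $t=1$ (the triangle becomes equilateral) but is harmless once the pieces are matched by their dimensions rather than by distinguishing base from legs.
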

\begin{proof}
Put 
$(X, d)=(U(\mathcal{P}_{q}), K\cdot e_{\mathcal{P}_{q}})$
and 
$(Y, e)=(U(\mathcal{P}_{r}), L\cdot e_{\mathcal{P}_{r}})$. 
For the sake of contradiction, suppose that there exists an isometry 
$I: (X, d)\to (Y, e)$.

We
first  assume that 
the condition (\ref{item:first}) is true. 
By the definition, 
the spaces
$(X, d)$ 
and 
$(Y, e)$
have unique accumulation points, 
say 
$\omega$, 
$\omega'$, 
respectively. 
Then 
$I(\omega)=\omega'$. 
Thus, 
we obtain 
$\{\,d(\omega, x)\mid 
x\in X
\, \}
=
\{\,d(\omega', y)\mid 
y\in Y
\, \}
$. 
Since 
$\{\,d(\omega, x)\mid 
x\in X
\, \}
=
\{0\}\cup \{\, K\cdot 2^{-i}\mid i\in \cind\, \}$, and 
 $\{\,e(\omega', y)\mid 
y\in Y
\, \}
=
\{0\}\cup \{\, L\cdot 2^{-i}\mid i\in \cind\, \}$, 
we obtain 
$K=L$. 
For each 
$i\in \cind$, 
the set 
$\{x\in  X 
\mid d(\omega, x)=K2^{-i}\}$
consists of three points,
 and they form an isosceles triangle. 
The apex angle of this isosceles triangle is equal to 
$\theta(q_{i})$, 
and 
$I(\{x\in  X 
\mid d(\omega, x)=K2^{-i}\})=
\{y\in  Y
\mid e(\omega', y)=L2^{-i}\}$. 
Then 
we obtain 
$\theta(q_{i})=\theta(r_{i})$, 
and hence 
$q=r$. 
This is a contradiction.

Second, we   assume that the condition 
(\ref{item:second}) holds true. 
Note that 
both 
$(X, d)$ 
and 
$(Y, e)$ 
contain $P_{i, j}$ for all 
$(i, j)\in\{1, 2, 3 \}\times \cind$ and 
the element
represented as 
$\infty$. 
For each $
(i, j)\in \{1, 2, 3\}\times \cind$, 
we put
 $\hensu_{1}(i, j)=\aaa(
\dim_{A}(P_{i, j}, p_{i, j}), 
X, d
)$
and 
$\hensu_{2}(i, j)=\aaa(
\dim_{A}(P_{i, j}, p_{i, j}), 
Y, e
)$. 
By the assumption (\ref{item:bbb}) and (\ref{item:ccc}), 
and
 by  the 
fact that each $P_{i, j}$ is open in 
$X$ and $Y$, respectively, 
for all $k\in \{1, 2\}$ we have 
\[
\hensu_{k}(i, j)=
\begin{cases}
P_{i, j}& \text{if $\infty\not\in \hensu_{k}(i, j)$}\\
\{\infty\}\cup P_{i, j}& \text{if $\infty\in \hensu_{k}(i, j)$. }
\end{cases}
\]
By  Definition \ref{df:tail}, 
 for all $k\in \{1, 2\}$,  
if $\infty\in \hensu_{k}(i, j)$, 
then $\infty$ is an isolated point of $\hensu_{k}(i, j)$. 
 Then, 
 since each $P_{i, j}$ is perfect (the assumption (\ref{item:aaa})), 
for each 
$(i, j)\in \{1, 2, 3\}\times \cind$, 
we can take  a non-isolated  point 
$a_{i, j}\in \hensu_{1}(
i, j)$. 
Note that 
$a_{i, j}\neq \infty$ and
$a_{i, j}\in P_{i, j}$. 
Since $I$ is an isometry and 
$\aaa$ is invariant under isometries, 
the point $I(a_{i, j})\in Y$ is a non-isolated point 
and  $I(a_{i, j})\in \hensu_{2}(i, j)$. 
Since $\infty\not\in \hensu_{2}(i, j)$ or 
$\infty$ is an isolated point of $\hensu_{2}(i, j)$, 
we have 
$I(a_{i, j})\in P_{i, j}\subset Y$.  
Put 
$\mathcal{Q}=\{(\{a_{i, j}\}, d_{i, j})\}_{i\in \{1, 2,3\}, j\in \cind}$, 
where 
$d_{i, j}$ 
is the trivial metric.
Then 
$(U(\mathcal{Q}_{q}), e_{\mathcal{Q}_{q}})$ 
and 
 $(U(\mathcal{Q}_{r}), e_{\mathcal{Q}_{r}})$ 
 are isometric to each other. 
Thus by the first case, we obtain
$q=r$. 
This leads to  the proposition. 
\end{proof}

\section{Geodesics}\label{sec:geo}
In this section, 
we prove Theorem \ref{thm:q-emb}
 and 
\ref{thm:q-embinfinite}. 
As its applications, we next prove 
Theorem \ref{thm:branch}, 
\ref{thm:topemb}, and
\ref{thm:locinfinite}.

The following construction of geodesics  using  optimal closed correspondences is presented  in 
\cite{CM2018} and \cite{IIT2016}.  
\begin{prop}\label{prop:canonicalgeo}
Let 
$(X, d)$ 
and 
$(Y, e)$ 
be compact metric spaces with 
$\grdis((X, d), (Y, e))>0$. 
Let 
$R\in \relacc_{opt}(X, d,  Y, e)$. 
For each 
$s\in (0, 1)$, 
define a metric 
$D_{s}$ 
on 
$R$ 
by  
$D_{s}((x, y), (u, v))=(1-s)d(x, u)+se(y, v)$. 
We define 
$\gamma:[0, 1]\to \grsp$ 
by 
\[
\gamma(s)=
\begin{cases}
(X, d) & \text{if $s=0$;}\\
(Y, e) & \text{if $s=1$;}\\
(R, D_{s}) & \text{if $s\in (0, 1)$. }
\end{cases}
\]
Then 
$\gamma$ 
is a geodesic in 
$(\grsp, \grdis)$ 
from 
$(X, d)$ 
to  
$(Y, e)$.
Moreover, 
\begin{enumerate}
\item 
For every 
$s\in (0, 1)$,
the set 
$\{\, (x, z)\in X\times R\mid \pi_{X}(z)=x\, \}$ 
is in 
$\relacc_{opt}(\gamma(0), \gamma(s))$. 
\item 
For every 
$s\in (0, 1)$,
the set 
$\{\, (z, x)\in R\times Y\mid \pi_{Y}(z)=y\, \}$ 
is in 
$\relacc_{opt}(\gamma(s), \gamma(1))$. 
\item 
For all 
$s, t\in (0, 1)$, 
we have 
$\Delta_{R}\in 
\relacc_{opt}(\gamma(s), \gamma(t))$. 
\end{enumerate}

\end{prop}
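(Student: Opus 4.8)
The plan is to realize $\gamma$ as a geodesic by explicitly computing every pairwise Gromov--Hausdorff distance among the points $\gamma(s)$ via Lemma~\ref{lem:ghestimate} and three concrete correspondences, and then pinning these distances down by a triangle-inequality squeeze that relies on the optimality of $R$, i.e.\ on the identity $\dis(R)=2\grdis((X,d),(Y,e))$. As a preliminary step I would check that $D_s$ is genuinely a metric on $R$ for $s\in(0,1)$: it is the convex combination $(1-s)\bigl(d\circ(\pi_X\times\pi_X)\bigr)+s\bigl(e\circ(\pi_Y\times\pi_Y)\bigr)$ of two pseudometrics, hence a pseudometric, and $D_s((x,y),(u,v))=0$ forces $d(x,u)=e(y,v)=0$, so $(x,y)=(u,v)$; since $R$ is a closed, hence compact, subset of $X\times Y$ and $D_s$ induces the subspace topology, we get $(R,D_s)\in\grsp$.

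Next, for $s\in(0,1)$ I would introduce the correspondence $R_0=\{(x,z)\in X\times R : \pi_X(z)=x\}$ between $X$ and $R$. It is a correspondence because $\pi_X(R)=X$, it is closed since it is the preimage of the diagonal of $X$ under the continuous map $(x,z)\mapsto(x,\pi_X(z))$, and a direct computation gives, for $z=(x,y)$, $w=(u,v)\in R$, that $|d(x,u)-D_s(z,w)|=s\,|d(x,u)-e(y,v)|$, whence $\dis(R_0)=s\,\dis(R)$ and $\grdis(\gamma(0),\gamma(s))\le\tfrac12\dis(R_0)=s\,\grdis((X,d),(Y,e))$. Symmetrically, $R_1=\{(z,y)\in R\times Y:\pi_Y(z)=y\}$ gives $\grdis(\gamma(s),\gamma(1))\le(1-s)\,\grdis((X,d),(Y,e))$. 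For $s,t\in(0,1)$, the trivial correspondence $\Delta_R\in\relacc(R,R)$, viewed between $(R,D_s)$ and $(R,D_t)$, satisfies $\dis(\Delta_R)=\sup_{z,w\in R}|D_s(z,w)-D_t(z,w)|=|s-t|\,\dis(R)$, so $\grdis(\gamma(s),\gamma(t))\le|s-t|\,\grdis((X,d),(Y,e))$; and trivially $\grdis(\gamma(0),\gamma(1))=\grdis((X,d),(Y,e))$.

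Then I would close the loop. Applying the triangle inequality $\grdis(\gamma(0),\gamma(1))\le\grdis(\gamma(0),\gamma(s))+\grdis(\gamma(s),\gamma(1))$ together with the bounds $s\,\grdis$ and $(1-s)\,\grdis$ forces equality everywhere, so $\grdis(\gamma(0),\gamma(s))=s\,\grdis((X,d),(Y,e))$ and $\grdis(\gamma(s),\gamma(1))=(1-s)\,\grdis((X,d),(Y,e))$; feeding this back into $\grdis(\gamma(0),\gamma(t))\le\grdis(\gamma(0),\gamma(s))+\grdis(\gamma(s),\gamma(t))$ for $0\le s\le t\le 1$ yields $\grdis(\gamma(s),\gamma(t))\ge(t-s)\,\grdis((X,d),(Y,e))$, which with the upper bound gives $\grdis(\gamma(s),\gamma(t))=|s-t|\,\grdis((X,d),(Y,e))$ for all $s,t\in[0,1]$. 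Such a map is Lipschitz, hence continuous, so $\gamma$ is a geodesic. Finally, the three ``moreover'' statements come for free: each of $R_0$, $R_1$, $\Delta_R$ is a closed correspondence whose distortion equals twice the Gromov--Hausdorff distance just computed between the relevant endpoints, so Lemma~\ref{lem:ghestimate} makes each optimal. The only real care needed is the bookkeeping in the distortion computations and the observation that the optimality of $R$ is precisely what makes the triangle-inequality squeeze collapse to equalities; there is no deeper obstacle.
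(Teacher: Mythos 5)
Your proposal is correct, and it follows the same standard route as the sources the paper cites for this proposition (the paper itself omits the proof, deferring to Chowdhury--M\'emoli and Ivanov--Iliadis--Tuzhilin): bound $\grdis(\gamma(s),\gamma(t))$ from above by $|s-t|\cdot\grdis((X,d),(Y,e))$ using the projection correspondences and $\Delta_R$ together with $\dis(R)=2\grdis((X,d),(Y,e))$, then upgrade to equality. Your explicit triangle-inequality squeeze is exactly the content of the paper's Lemma \ref{lem:crigeo}, which you have simply inlined, and your observation that each of $R_0$, $R_1$, $\Delta_R$ then attains the infimum in Lemma \ref{lem:ghestimate} correctly yields the three ``moreover'' claims.
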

The following is a useful criterion of  geodesics. 
The  proof is presented  in 
\cite[Lemma 1.3]{CM2018}. 

\begin{lem}\label{lem:crigeo}
Let 
$\gamma: [0, 1]\to X$ 
be a curve. 
If for every 
$s, t\in [0, 1]$, 
we have 
$d(\gamma(s), \gamma(t))
\le |s-t|\cdot d(\gamma(0), \gamma(1))$, 
then $\gamma$ 
is a geodesic. 
\end{lem}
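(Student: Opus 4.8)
The plan is to bootstrap the one-sided estimate in the hypothesis to the exact geodesic identity using nothing more than the triangle inequality. Set $a=d(\gamma(0),\gamma(1))$. Since $\gamma$ is a curve it is already continuous, and its endpoints $\gamma(0)$ and $\gamma(1)$ are the prescribed ones, so by the definition of a geodesic from $\gamma(0)$ to $\gamma(1)$ it remains only to check that $d(\gamma(s),\gamma(t))=|s-t|\cdot a$ for all $s,t\in[0,1]$. The inequality ``$\le$'' is exactly the hypothesis, so the work is in the reverse inequality.

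For ``$\ge$'', fix $s,t\in[0,1]$ and assume $s\le t$ without loss of generality (the case $t\le s$ is symmetric, and $s=t$ is trivial). Applying the triangle inequality to the four points $\gamma(0),\gamma(s),\gamma(t),\gamma(1)$ gives
\[
a=d(\gamma(0),\gamma(1))\le d(\gamma(0),\gamma(s))+d(\gamma(s),\gamma(t))+d(\gamma(t),\gamma(1)).
\]
Now I would bound the outer two terms by the hypothesis, namely $d(\gamma(0),\gamma(s))\le s\cdot a$ and $d(\gamma(t),\gamma(1))\le(1-t)\cdot a$. Substituting and rearranging yields $d(\gamma(s),\gamma(t))\ge\bigl(1-s-(1-t)\bigr)\cdot a=(t-s)\cdot a=|s-t|\cdot a$. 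Together with the hypothesis this gives the desired equality, so $\gamma$ is a geodesic.

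There is essentially no obstacle; the only things to keep track of are the reduction to the case $s\le t$ and the fact that the degenerate case $a=0$, where $\gamma$ is forced to be constant, is covered by the same computation. One may also remark, if desired, that the hypothesis makes $\gamma$ an $a$-Lipschitz map and hence re-proves its continuity, but this is not needed since ``curve'' already includes continuity.
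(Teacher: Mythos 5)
Your proof is correct and is exactly the standard argument: the paper does not reproduce a proof but defers to \cite[Lemma 1.3]{CM2018}, where the same triangle-inequality bootstrap (bounding $d(\gamma(0),\gamma(s))$ and $d(\gamma(t),\gamma(1))$ by the hypothesis to force the reverse inequality) is used. Nothing to add.
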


In Propositions \ref{prop:prodgeo} 
and \ref{prop:tailedgeo}, 
we construct  modifications of the  geodesic stated  in Proposition 
\ref{prop:canonicalgeo}. 
In the proof of Theorem \ref{thm:q-emb}, 
Proposition  \ref{prop:prodgeo}  is used to obtain 
a geodesic consisting  of perfect metric spaces. 
Proposition \ref{prop:tailedgeo}
 is used to attach $\{(U_{q}, e_{q})\}_{q\in \qcube}$ in Definition  \ref{df:tail} to a geodesic constructed in 
 Proposition  \ref{prop:prodgeo}. 
\begin{prop}\label{prop:prodgeo}
Let 
$(X_{0}, d_{0})$
 and 
 $(X_{1}, d_{1})$ 
 be compact metric spaces 
 with  
 $\grdis((X_{0}, d_{0}), (X_{1}, d_{1}))>0$. 
 Let 
 $f: [0, 1]\to \grsp$
 be a geodesic from 
 $(X_{0}, d_{0})$ 
 to 
 $(X_{1}, d_{1})$. 
 Let 
 $(Y, e)$ 
 be in 
 $\grsp$, 
 and 
 $L\in (0, \infty)$ satisfy 
 $L\delta_{e}(Y)\le 2\grdis((X_{0}, d_{0}), (X_{1}, d_{1}))$. 
 Let 
 $A$ 
 be a closed subset of 
 $[0, 1]$ 
 with 
 $\{0, 1\}\subset A$, 
 and 
 let 
 $\zeta: [0, 1]\to [0, \infty)$ 
 be an 
 $L$-Lipschitz map 
 with 
 $\zeta^{-1}(0)=A$. 
Put 
$f(s)=(X_{s}, d_{s})$. 
 We define a map 
 $F: [0, 1]\to \grsp$ 
 by 
 \[
 F(s)=
 \begin{cases}
f(s) & \text{if $s\in A$;}\\
 (X_{s}\times Y, d_{s}\times_{\infty}(\zeta(s)\cdot e))
 & 
 \text{if $s \in [0, 1]\setminus A$. } 
 \end{cases}
 \]
 Then 
 $F$ 
 is a geodesic from 
 $(X_{0}, d_{0})$ 
 to 
 $(X_{1}, d_{1})$. 
Put 
$F(s)=(Z_{s}, D_{s})$. 
 Moreover, 
 \begin{enumerate}
 \item 
 for all  
 $s, t\in [0, 1]\setminus A$, 
 and  for all 
 $R\in \relacc_{opt}(f(s), f(t))$, 
 the set 
 \[
\{\, ((x, a), (x', a))\in Z_{s}\times Z_{t}\mid(x, x')\in R, a\in Y\, \}
\]
is in 
$\relacc_{opt}(F(s), F(t))$;
\item for all 
$s\in A$ 
and 
$t\in[0, 1]\setminus  A$, 
and 
for all 
$R\in \relacc_{opt}(f(s), f(t))$, 
 the set 
 \[
\{\, (x, (x', a))\in Z_{s}\times Z_{t}\mid(x, x')\in R, a\in Y\, \}
\]
is in 
$\relacc_{opt}(F(s), F(t))$.

 \end{enumerate}
 
\end{prop}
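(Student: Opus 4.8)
Set $C=\grdis((X_{0},d_{0}),(X_{1},d_{1}))>0$. The plan is to reduce everything to Lemma \ref{lem:crigeo}: I will show that for all $s,t\in[0,1]$ one has $\grdis(F(s),F(t))\le |s-t|\cdot C$. Since $0,1\in A$ forces $F(0)=(X_{0},d_{0})$ and $F(1)=(X_{1},d_{1})$, this single estimate simultaneously shows that $F$ is $C$-Lipschitz, hence a curve, and, via Lemma \ref{lem:crigeo}, that it is a geodesic from $(X_{0},d_{0})$ to $(X_{1},d_{1})$; in particular $\grdis(F(s),F(t))=|s-t|\cdot C$ for every $s,t$.

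To prove the estimate I split into three cases. If $s,t\in A$, then $F(s)=f(s)$ and $F(t)=f(t)$ and the bound is an equality because $f$ is a geodesic. If $s,t\in[0,1]\setminus A$, I take any $R\in\relacc_{opt}(f(s),f(t))$ (nonempty by Lemma \ref{lem:opt}), so $\dis(R)=2|s-t|\cdot C$ by Lemma \ref{lem:ghestimate}, and form the correspondence $\widetilde{R}=\{\,((x,a),(x',a))\mid (x,x')\in R,\ a\in Y\,\}$ between $Z_{s}=X_{s}\times Y$ and $Z_{t}=X_{t}\times Y$. To bound $\dis(\widetilde{R})$, Lemma \ref{lem:max} splits the difference of the two $\ell^{\infty}$-products into $|d_{s}(x,u)-d_{t}(x',u')|$, controlled by $\dis(R)=2|s-t|\cdot C$, and $|\zeta(s)-\zeta(t)|\cdot e(a,b)$, which is at most $L|s-t|\cdot\delta_{e}(Y)\le 2|s-t|\cdot C$ by the $L$-Lipschitz property of $\zeta$ and the standing hypothesis $L\delta_{e}(Y)\le 2C$; hence $\dis(\widetilde{R})\le 2|s-t|\cdot C$. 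Finally, if exactly one of $s,t$ lies in $A$, say $s\in A$ and $t\in[0,1]\setminus A$ (the other order being symmetric), I take $R\in\relacc_{opt}(f(s),f(t))$ as before and use $\widehat{R}=\{\,(x,(x',a))\mid (x,x')\in R,\ a\in Y\,\}$ between $X_{s}$ and $Z_{t}$; writing $d_{s}(x,u)=d_{s}(x,u)\lor 0$ and applying Lemma \ref{lem:max} again, the extra term is $\zeta(t)\cdot e(a,b)=|\zeta(t)-\zeta(s)|\cdot e(a,b)\le L|s-t|\cdot\delta_{e}(Y)\le 2|s-t|\cdot C$, using $\zeta(s)=0$ since $s\in A=\zeta^{-1}(0)$. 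So again $\dis(\widehat{R})\le 2|s-t|\cdot C$. In all cases Lemma \ref{lem:ghestimate} gives $\grdis(F(s),F(t))\le|s-t|\cdot C$.

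For the ``moreover'' part, once $F$ is known to be a geodesic we have $\grdis(F(s),F(t))=|s-t|\cdot C$, so by Lemma \ref{lem:ghestimate} the minimal distortion of a correspondence between $F(s)$ and $F(t)$ equals $2|s-t|\cdot C$; since for an arbitrary $R\in\relacc_{opt}(f(s),f(t))$ the sets $\widetilde{R}$ (case two) and $\widehat{R}$ (case three) realize distortion at most this value, they are optimal, which is exactly statements (1) and (2). It only remains to check that $\widetilde{R}$ and $\widehat{R}$ are genuine closed correspondences: surjectivity of both coordinate projections is immediate from $R$ being a correspondence, and each set is the continuous image of the compact set $R\times Y$, hence compact and in particular closed.

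I expect the main obstacle to be purely organizational: keeping straight which pair of spaces occurs in each of the three cases, and invoking the hypothesis $L\delta_{e}(Y)\le 2C$ precisely where the off-diagonal term must be absorbed; the analytic content is just two applications of Lemma \ref{lem:max}. A minor auxiliary point to dispatch is that each $D_{s}$ with $s\notin A$ is a genuine metric, being the $\ell^{\infty}$-product of $d_{s}$ with the scaled metric $\zeta(s)\cdot e$ where $\zeta(s)>0$, and that the degenerate cases $s=t$ are trivial.
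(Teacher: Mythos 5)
Your proposal is correct and follows essentially the same route as the paper: reduce to Lemma \ref{lem:crigeo}, build the product correspondence from an optimal $R$, and bound its distortion via Lemma \ref{lem:max} together with the hypothesis $L\delta_{e}(Y)\le 2\grdis((X_{0},d_{0}),(X_{1},d_{1}))$. You are in fact somewhat more complete than the paper, which treats only the case $s,t\in[0,1]\setminus A$ and leaves the mixed case, the optimality claims (1)--(2), and the closedness of the constructed correspondences implicit.
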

\begin{proof}
By Lemma \ref{lem:crigeo}, 
it suffices to show that 
\begin{align}\label{eq:ghgeodesic}
\gh(F(s), F(t))\le |s-t|\cdot \gh((X_{0}, d_{0}), (X_{1}, d_{1})).
\end{align}
We assume that $s, t\in [0, 1]\setminus A$, 
and 
take an optimal correspondence   
$R\in \relacc_{opt}(X_{s}, d_{s}, X_{t}, d_{t})$. 
Put
\[
U=
\{\, ((x, a), (x', a))\in Z_{s}\times Z_{t}\mid(x, x')\in R, a\in Y\, \}
\]
Then, 
$U\in \rela(Z_{s}, Z_{t})$. 
For all 
$((x, a), (x', a)), ((y, b), (y', b))\in U$, 
by Lemma \ref{lem:max} 
we obtain 
\begin{align*}
&|D_{s}((x, a), (y, b))-D_{t}((x', a), (y', b))|
\\
&\le 
|d_{s}(x, y)-d_{t}(x', y')|\lor 
|(\zeta(s)e)(a, b)-(\zeta(t)e)(a, b)|. 
\end{align*}
Since 
$R$ 
is optimal, 
we obtain 
\[
|d_{s}(x, y)-d_{t}(x', y')|\le 2|s-t|\gh(X_{0}, d_{0}, X_{1}, d_{1}). 
\]
By the assumption, 
\begin{align*}
 |(\zeta(s)e)(a, b)-(\zeta(t)e)(a, b)|
& =|\zeta(s)-\zeta(t)|e(a, b)
 \le |s-t| L \delta_{e}(Y)\\
 &\le 
2|s-t|\grdis((X_{0}, d_{0}), (X_{1}, d_{1})). 
\end{align*}
Thus, 
$\dis(U)
\le 2|s-t|\gh((X_{0}, d_{0}), (X_{1}, d_{1}))$. 
This implies 
the inequality (\ref{eq:ghgeodesic}). 
The remaining  cases can be solved  in  a similar way. 
\end{proof}

\begin{prop}\label{prop:tailedgeo}
Let 
$(X_{0}, d_{0})$
 and 
 $(X_{1}, d_{1})$  
 be compact metric spaces 
 with 
 $\grdis((X_{0}, d_{0}), (X_{1}, d_{1}))>0$. 
 Let 
 $f: [0, 1]\to \grsp$
 be a geodesic from 
 $(X_{0}, d_{0})$ 
 to 
 $(X_{1}, d_{1})$. 
 Let 
 $(Y, e)\in \grsp$,  
 and let 
 $L\in (0, \infty)$ 
 satisfy 
 $L\delta_{e}(Y)\le 2\grdis((X_{0}, d_{0}), (X_{1}, d_{1}))$. 
 Let 
 $A$ 
 be a closed subset of 
 $[0, 1]$ 
 with 
 $\{0, 1\}\subset A$, 
 and 
 let 
 $\zeta: [0, 1]\to [0, \infty)$ 
 be an 
 $L$-Lipschitz map 
 with 
 $\zeta^{-1}(0)=A$. 
 We assume that there exists a set 
 $R$ 
 such that 
for all 
$s\in (0, 1)$ 
we can represent 
$f(s)=(R, d_{s})$. 
Let 
$o\in R$, 
and 
$\omega\in Y$. 
We define 
\[
Z=R\times \{\omega\}\cup\{o\}\times Y, 
\]
and define a map 
$F: [0, 1]\to \grsp$ 
by 
 \[
 F(s)=
 \begin{cases}
  f(s) & \text{if $s\in A$;}\\
 (Z, d_{s}\times_{\infty}(\zeta(s)\cdot e)) & \text{if $s\in [0, 1]\setminus A$.}
 \end{cases}
 \]
 If for all 
 $s, t\in (0, 1)$, 
 there exists  
 $K\in \relacc_{opt}(R, d_{s}, R, d_{t})$ 
 with 
 $(o, o)\in K$, 
then 
$F$ 
is a geodesic from 
$(X_{0}, d_{0})$ 
to 
$(X_{1}, d_{1})$.

\end{prop}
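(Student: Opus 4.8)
The plan is to reduce everything to Lemma~\ref{lem:crigeo}: writing $m=\grdis((X_{0},d_{0}),(X_{1},d_{1}))>0$, it suffices to verify $\grdis(F(s),F(t))\le |s-t|\cdot m$ for all $s,t\in[0,1]$, since $F(0)=(X_{0},d_{0})$ and $F(1)=(X_{1},d_{1})$. Because $f$ is a geodesic we have $\grdis(f(s),f(t))=|s-t|\cdot m$ for all $s,t$, hence by Lemma~\ref{lem:ghestimate} every $K\in\relacc_{opt}(R,d_{s},R,d_{t})$ satisfies $\dis(K)=2|s-t|\cdot m$; I shall also use repeatedly that $\zeta$ is $L$-Lipschitz and $L\delta_{e}(Y)\le 2m$, so that $|\zeta(s)-\zeta(t)|\cdot\delta_{e}(Y)\le 2|s-t|\cdot m$. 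Note that for $s\notin A$ one has $\zeta(s)>0$, so $d_{s}\times_{\infty}(\zeta(s)\,e)$ is a genuine metric and $Z$ is a closed, hence compact, subset of $R\times Y$, so $F(s)\in\grsp$.

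First I would treat $s,t\in(0,1)$. If $s,t\in A$ the estimate is immediate from $F(s)=f(s)$, $F(t)=f(t)$ and $\grdis(f(s),f(t))=|s-t|m$. Otherwise, fix $K\in\relacc_{opt}(R,d_{s},R,d_{t})$ with $(o,o)\in K$ (the standing hypothesis), abbreviate $D_{s}=d_{s}\times_{\infty}(\zeta(s)e)$ and $D_{t}=d_{t}\times_{\infty}(\zeta(t)e)$, and consider the correspondence
\[
U=\{\,((r,\omega),(r',\omega))\mid (r,r')\in K\,\}\cup\{\,((o,y),(o,y))\mid y\in Y\,\}
\]
(when $s\in A$ one replaces the factor $R\times\{\omega\}$ in the source by $R$ itself, since then $F(s)=(R,d_{s})$; the estimates below are unaffected because $\zeta(s)=0$ gives $d_{s}(r,o)=d_{s}(r,o)\lor\zeta(s)e(\omega,y)$). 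That $U$ is a correspondence follows from $\pi_{R}(K)=R$. To bound $\dis(U)$ I split pairs of points into three types. For two points of $R\times\{\omega\}$ the discrepancy is $|d_{s}(r,p)-d_{t}(r',p')|\le\dis(K)=2|s-t|m$. For two points of $\{o\}\times Y$ it is $|\zeta(s)-\zeta(t)|\,e(y,y')\le|\zeta(s)-\zeta(t)|\,\delta_{e}(Y)\le 2|s-t|m$. The remaining, crucial, case is a point $(r,\omega)$ of $R\times\{\omega\}$ paired with a point $(o,y)$ of $\{o\}\times Y$: here I apply Lemma~\ref{lem:max} to $d_{s}(r,o)\lor\zeta(s)e(\omega,y)$ versus $d_{t}(r',o)\lor\zeta(t)e(\omega,y)$, bounding the first coordinates by $|d_{s}(r,o)-d_{t}(r',o)|\le\dis(K)=2|s-t|m$ (this is exactly where $(o,o)\in K$ is used) and the second coordinates by $|\zeta(s)-\zeta(t)|\,\delta_{e}(Y)\le 2|s-t|m$. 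Thus $\dis(U)\le 2|s-t|m$, and Lemma~\ref{lem:ghestimate} gives $\grdis(F(s),F(t))\le|s-t|m$ for all $s,t\in(0,1)$.

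It then remains to reach the endpoints. For every $s\in(0,1)$ the correspondence from $F(s)$ to $f(s)$ that forgets the $Y$-coordinate (sending $(r,\omega)\mapsto r$ and $(o,y)\mapsto o$) has distortion at most $\zeta(s)\delta_{e}(Y)$, so $\grdis(F(s),f(s))\le\frac{1}{2}\zeta(s)\delta_{e}(Y)$ (trivially with value $0$ when $s\in A$). Hence $\grdis(F(s),(X_{0},d_{0}))\le\frac{1}{2}\zeta(s)\delta_{e}(Y)+s\,m\to 0$ as $s\to 0^{+}$, using continuity of $\zeta$ and $\zeta(0)=0$, and symmetrically $\grdis(F(s),(X_{1},d_{1}))\to 0$ as $s\to 1^{-}$. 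Since $\grdis$ is a metric, for $t\in(0,1)$ we get $\grdis(F(0),F(t))=\lim_{s\to 0^{+}}\grdis(F(s),F(t))\le\lim_{s\to 0^{+}}|s-t|m=tm$ from the interior estimate, and likewise at $s=1$; the cases $t\in A$ (in particular $t\in\{0,1\}$) are immediate from $f$ being a geodesic. This establishes $\grdis(F(s),F(t))\le|s-t|m$ for all $s,t\in[0,1]$, and Lemma~\ref{lem:crigeo} finishes the proof.

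The \emph{main obstacle} is the third type of pair in the distortion estimate, a point of the ``trunk'' $R\times\{\omega\}$ paired with a point of the attached ``tail'' $\{o\}\times Y$: the quantity $|D_{s}((r,\omega),(o,y))-D_{t}((r',\omega),(o,y))|$ can be controlled only if the distinguished point $o$ is transported to itself by the optimal correspondences $K$, which is precisely the content of the hypothesis $(o,o)\in K$, and it is the $\ell^{\infty}$-product together with Lemma~\ref{lem:max} that decouples the trunk from the tail. Everything else is routine bookkeeping; the only mild subtlety is that $(X_{0},d_{0})$ and $(X_{1},d_{1})$ are not literally carried by the set $R$, which is why the endpoint inequalities are obtained by a limiting argument rather than by an explicit correspondence.
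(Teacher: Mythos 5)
Your proof is correct and follows essentially the same route as the paper's: you reduce to Lemma~\ref{lem:crigeo}, build exactly the correspondence $U=\{((x,\omega),(x',\omega))\mid (x,x')\in K\}\cup\{((o,y),(o,y))\mid y\in Y\}$, and isolate the trunk--tail pairing as the step where $(o,o)\in K$ together with Lemma~\ref{lem:max} is needed, which is precisely the remark the paper makes. The only presentational difference is that the paper invokes continuity of $F$ to restrict attention to $s,t\in(0,1)$, whereas you recover the endpoint cases by the limiting estimate $\grdis(F(s),f(s))\le\tfrac12\zeta(s)\delta_e(Y)$; both are fine.
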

\begin{proof}
Since 
$F$ 
is continuous, 
by Lemma \ref{lem:crigeo}, 
it suffices to show the equality 
$\gh(F(s), F(t))\le |s-t|\cdot \gh(X_{0},d_{0},  X_{1}, d_{1})$
 for all 
 $s, t\in (0, 1)$. 
Take  
$K\in \relacc_{opt}(R, d_{s}, R, d_{t})$ 
with 
$(o, o)\in K$. 
We define a correspondence 
$U\in \rela(Z, Z)$ 
by 
\begin{align*}
U=
\{\, ((x,\omega), (x', \omega))\mid(x, x')\in K\, \}
\cup\{\,((o, y), (o, y))\mid y\in Y \, \}. 
\end{align*}
The rest of the proof can be  presented 
in the same way as 
the  proof of Proposition \ref{prop:prodgeo}. 
Remark that to obtain the inequality 
 \[
|d_{s}(x, y)-d_{t}(x', y')|\le 2|s-t|\gh((X_{0}, d_{0}), ( X_{1}, d_{1})),
\]
we need to use the assumption that  
$(o, o)\in K$. 
\end{proof}


\begin{proof}[Proof of Theorem \ref{thm:q-emb}]
Let 
$A$ 
be a closed subset of 
$[0, 1]$. 
Let 
$(u, v, w)\in \{0, 1, 2\}^{3}$. 
Let 
$(X, d), (Y, e)\in \mathscr{X}(u, v, w)$ 
with 
$\grdis((X, d), (Y, e))>0$. 
We  only need to  consider 
 the case of 
$(u, v, w)\in \{0, 1\}^{3}$. 

Let 
$R\in \relacc_{opt}(X, d, Y, e)$, 
and for each 
$s\in (0, 1)$, 
put  
\[
d_{s}=(1-s)\cdot d+s\cdot e.
\]  
Then by  Proposition \ref{prop:canonicalgeo}, 
the map 
$\gamma:[0, 1]\to \grsp$ 
defined by 
\[
\gamma(s)=
\begin{cases}
(X, d) & \text{if $s=0$;}\\
(Y, e) & \text{if $s=1$;}\\
(R, d_{s}) & \text{if $s\in (0, 1)$}
\end{cases}
\]
is a geodesic 
from 
$(X, d)$ 
to 
$(Y, e)$.

To construct a branching bunch of geodesics, 
in the following, 
we first construct an ``expansion'' $\{(W, E_{s})\}_{s\in (0, 1)}$ of 
the geodesics $\gamma$ such that each $(W, E_{s})$ is the  product space  of 
$\gamma(s)$ and a Cantor  metric space $(C, h)$ in  $\mathscr{X}(u, v, w)$.  Such a  choice of $(C, h)$ is needed to prove $F(s, q)\in \mathscr{X}(u, v, w)$. 
By the construction, each $(W, E_{s})$ is perfect and this property is 
convenient for our aim (note that each $\gamma(s)$ is 
not always perfect).
If 
$(u, v, w)=(1, 1, 1)$, 
let 
$(C, h)$ 
be a Cantor metric space of 
type 
$(1, 1, 1)$;
otherwise, 
let 
$(C, h)$ 
be a Cantor metric space of totally exotic type 
$(u, v, w)$ 
(see Theorem \ref{thm:totexo}). 
We may assume that  
$\delta_{h}(C)\le 2$. 
Let 
$\zeta_{1}: [0, 1]\to [0, \infty)$ 
be a 
$\grdis((X, d), (Y, e))$-Lipschitz map 
with 
$\zeta_{1}^{{-1}}(0)=\{0, 1\}$
 (see Lemma \ref{lem:lip}). 
 For 
 $s\in (0, 1)$, 
put 
\[
(W, E_{s})=(R\times C, d_{s}\times_{\infty} (\zeta_{1}(s)\cdot  h)). 
\]
We define 
a map 
$f:[0, 1]\to \grsp$  
by 
\[
f(s)=
\begin{cases}
(X, d) & \text{if $s=0$;}\\
(Y, e) & \text{if $s=1$;}\\
(W, E_{s}) & \text{if $s\in (0, 1)$}
\end{cases}
\]
By Proposition \ref{prop:prodgeo}, 
the map
 $f$ 
 is a geodesic from 
$(X, d)$ 
to 
$(Y, e)$. 
Note that by the latter parts of Propositions 
\ref{prop:canonicalgeo} and \ref{prop:prodgeo},  
for all 
$s, t\in (0, 1)$
the trivial correspondence 
$\Delta_{W}$ 
is 
in 
$\relacc_{opt}(f(s), f(t))$. 

To construct geodesics continuously parametrized by 
the Hilbert cube, 
we use the spaces $(U(\mathcal{P}_{q}), e_{\mathcal{P}_{q}})$ for some $\mathcal{P}$ as  ``identifiers'' corresponding to 
$q\in  \qcube$ (see Proposition \ref{prop:Uiso}). 
In the following,  we define $\mathcal{P}=\{(P_{i, j}, p_{i, j})\}_{i\in \{1, 2, 3\}, j\in \cind}$,  depending on $(u, v, w)$,  to construct 
$(U(\mathcal{P}_{q}), e_{\mathcal{P}_{q}})$. 
In the case of 
$w=1$, 
we 
define 
\[
M=
\begin{cases}
\sup_{s\in [0, 1]}\dim_{A}(W, E_{s}) & \text{if $(u, v, w)=(1, 1, 1);$}\\
1 & \text{otherwise.}
\end{cases}
\]
We now show that $M$ is always finite. 
In the case of $(u, v, w)=(1, 1, 1)$, 
put
$(\mset, \msetdis_{s})=(X\times Y\times C, d_{s}\times_{\infty}(\zeta_{1}(s)\cdot h))$. 
Then, the space 
$(W, E_{s})$ is a subspace of 
$(\mset, \msetdis_{s})$ for all $s\in[0, 1]$. 
Thus, Lemma \ref{lem:subAssouad} implies 
$\dim_{A}(W, E_{s})\le \dim_{A}(\mset, \msetdis_{s})$.
Since the Assouad dimension is  
a
bi-Lipschitz invariant, 
and since $d_{s}$ is bi-Lipschitz equivalent  to 
$d\times_{\infty}e$, or $d$ or $e$, 
Lemma \ref{lem:finAssouad} implies 
$\dim_{A}(\mset, \msetdis_{s})\le \dim_{A}(X, d)+\dim_{A}(Y, e)+\dim_{A}(C, h)$
 for all $s\in [0, 1]$. 
Then, we have 
$\sup_{s\in [0, 1]}\dim_{A}(W, E_{s})\le \dim_{A}(X, d)+\dim_{A}(Y, e)+\dim_{A}(C, h)$, and hence 
$\sup_{s\in [0, 1]}\dim_{A}(W, E_{s})$ is finite. 
Namely, the number $M$ is always finite. 

In the case of $w=1$, 
for 
$i\in \{1, 2, 3\}$, 
and 
$j\in \cind$, 
let 
$c(i, j)\in (0, 1)$ 
satisfy 
\[
\dim_{A}(2^{\omega}, \beta_{c(i, j)})=M+i+2^{-j}. 
\]
Put 
$(P_{i, j}, p_{i, j})=(2^{\omega}, 2^{-j-1}l(\pi/6)\cdot \beta_{c(i, j)})$. 
Then, by Lemma \ref{lem:smprop}, 
\begin{enumerate}
\item each 
$(P_{i, j}, p_{i, j})$ 
is 
$(2^{M+6})$-doubling;
\item each 
$(P_{i, j}, p_{i, j})$ 
is an ultrametric space, 
and hence it is 
$\delta$-uniformly disconnected for all 
$\delta\in (0, 1)$;
\item 
each 
$(P_{i, j}, p_{i, j})$
 is 
 $(2^{-1/M})$-uniformly perfect.
\end{enumerate}

In the case of 
$w=0$, 
for 
$i\in \{1, 2, 3\}$, 
and 
$j\in \cind$, 
let 
$(P_{i, j}, p_{i, j})$ 
be the one-point metric space. 

In any case, 
put 
$\mathcal{P}=\{(P_{i, j}, p_{i, j})\}_{i\in \{1, 2, 3\}, j\in \cind}$. 
For each 
$q\in \qcube$,  
put 
$(U_{q}, e_{q})=(U(\mathcal{P}_{q}), e_{\mathcal{P}_{q}})$.
By Proposition \ref{prop:uprop}, 
we obtain 
$(U_{q}, e_{q})\in \mathscr{X}(1, 1, 1)$ if $w=1$; otherwise, 
$(U_{q}, e_{q})\in \mathscr{X}(1, 1, 0)$.   
Recall  that 
$U_{q}$
 contains the element represented as 
 $\infty$. 
Take 
$o\in W$. 
Let 
$\zeta_{2}: [0, 1]\to [0, \infty)$ 
be a 
$\grdis((X, d), (Y, e))$-Lipschitz map 
with 
$\zeta_{2}^{{-1}}(0)=A$ (see Lemma \ref{lem:lip}).  
For $s\in [0, 1]$ and $q\in \qcube$,  we put 
\[
Z_{q}=W\times \{\infty\}\cup \{o\}\times U_{q}, 
\]
and define a metric $H_{s, q}$ on $Z_{q}$ by 
\[
H_{s, q}=E_{s}\times_{\infty} (\zeta_{2}(s)\cdot e_{q}). 
\]
Note that if $s\in A\cap (0, 1)$, then 
$(Z_{q}, H_{s, q})$ is a pseudo-metric space, and 
there exists an isometry between 
$(W, E_{s})$ and the quotient metric space of  $(Z_{q}, H_{s, q})$ by 
the relation $x\sim y$ defined by  $H_{s, q}(x, y)=0$. 
The family  $\{(Z_{q}, H_{s, q})\}_{(s, q)\in (0, 1)\times \qcube}$ is considered as an 
`` expansion of a geodesic $\gamma$ equipped with the  identifiers
 $\{(U_{q}, e_{q})\}_{q\in \qcube}$.''

We define a map 
$F: [0, 1]\times \qcube \to \grsp$
by 
\[
F(s, q)=
\begin{cases}
f(s) & \text{if $s\in A$; }\\
(Z_{q}, H_{s, q}) & \text{if $s\in [0, 1]\setminus A$.}
\end{cases}
\]

We now prove that the map $F:  [0, 1]\times \qcube \to \grsp$ is an 
$A$-branching bunch of geodesics from $(X, d)$ to $(Y, e)$. 

By Proposition \ref{prop:Uconti}, 
we first observe  that the map 
$F$ is continuous.

By the definition of $F$, 
the conditions (\ref{item:end}) 
and 
(\ref{item:A}) in Definition \ref{df:bbg} are 
satisfied.

Since  for each 
$q\in \qcube$ 
we have 
$\delta_{e_{q}}(U_{q})\le 1$, 
and since 
the trivial correspondence 
$\Delta_{W}$ is 
in $\relacc_{opt}(W, E_{s}, W, E_{t})$, 
by  Proposition \ref{prop:tailedgeo}. 
the map 
$F_{q}:[0, 1]\to \grsp$ 
is a geodesic.
Thus the condition (\ref{item:Fq}) 
in Definition \ref{df:bbg}
is satisfied.

We now prove  the condition (\ref{item:noniso}) 
in Definition \ref{df:bbg}. 
Before doing that, 
depending on 
$(u, v, w)$, 
we  define isometrically invariant  operations picking out the 
space $(U_{q}, e_{q})$ from $F(s, q)$. 
Let 
$(S, m)$ 
be an arbitrary compact metric space. 
We denote by 
$\mathcal{C}_{M}(S, m)$ 
the 
set of all 
$x\in S$ 
for which there exists 
$r\in (0, \infty)$ 
such that for all 
$\epsilon\in (0, r)$
we have  
$\dim_{A}(B(x, \epsilon), m)\le M$.  
We denote by 
$\mathcal{I}(S, m)$ 
the set of all 
isolated points of 
$(S, m)$. 
Let 
$(u, v, w)\in \{0, 1\}$, 
we define 
\[
\mathcal{B}_{(u, v, w)}(S, m)=
\begin{cases}
\cl_{S}(S\setminus \mathcal{C}_{M}(S, m)) & \text{if $(u, v, w)=(1, 1, 1 )$;}\\
\cl_{S}(\mathcal{I}(S, m))  & \text{if $w=0$;}\\
\cl_{S}(S\setminus \mathcal{S}_{\mathscr{P}_{1}}(S, m)) & \text{if $u=0$ and $w$=1;}\\
\cl_{S}(S\setminus \mathcal{S}_{\mathscr{P}_{2}}(S, m)) & \text{if $(u, v, w)=(1, 0, 1)$,}
\end{cases}
\]
where 
$\cl_{S}$ 
is the closure operator of 
$S$. 
Then 
$\mathcal{B}_{(u, v, w)}(S, m)$ 
is an isometric invariant, i.e., 
if 
$(S, m)$ 
and 
$(S', m')$ 
are isometric, 
then so are 
$\mathcal{B}_{(u, v, w)}(S, m)$ 
and 
$\mathcal{B}_{(u, v, w)}(S', m')$. 
By the definitions of 
$M$, 
$\mathcal{P}$, 
and  
$Z_{q}$, 
and 
by  Lemma \ref{lem:timestotexo}, 
we see 
 that for every 
 $(s, q)\in ([0, 1]\setminus A)\times \qcube $ 
 the space  
 $\mathcal{B}_{(u, v, w)}(F(s, q))$ 
 is isometric to 
$(U_{q}, \zeta_{2}(s)\cdot e_{q})$. 

Take 
$(s, q), (t, r)\in ([0, 1]\setminus A)\times \qcube$. 
We now  prove that if 
$F(s, q)$ 
and 
$F(t, r)$ 
are isometric to each other, 
then 
$(s, q)= (t, r)$.  
Under this assumption, 
since 
$\mathcal{B}_{(u, v, w)}$ 
is invariant under isometric maps, 
$(U_{q}, \zeta_{2}(s)\cdot e_{q})$
 and 
 $(U_{r}, \zeta_{2}(t)\cdot e_{r})$ 
 are isometric. 
Thus, by Proposition \ref{prop:Uiso}, 
we obtain 
$q=r$.  
Since 
$F_{q}$ 
is a geodesic, we observe  that 
$s=t$. 
This implies the condition (\ref{item:noniso}) in 
Definition \ref{df:bbg}.

Subsequently, we 
 prove that 
for all 
$(s, q)\in [0, 1]\times \qcube$
we have 
$F(s, q)\in \mathscr{X}(u, v, w)$.  
By Lemma
\ref{lem:qsprod}, 
we first observe that 
$T_{\mathscr{P}_{1}}(W, E_{s})=u$ 
and 
$T_{\mathscr{P}_{2}}(W, E_{s})=v$. 
Then we also  observe  that 
$T_{\mathscr{P}_{1}}(W\times U_{q}, E_{t}\times_{\infty} (\zeta_{2}(s)e_{q}))=u$ 
and 
$T_{\mathscr{P}_{2}}(W\times U_{q}, E_{t}\times_{\infty} (\zeta_{2}(s)e_{q}))=v$. 
Since 
$F(s, q)$ 
is a subspace of 
$(W\times U_{q}, E_{t}\times_{\infty} (\zeta_{2}(s)e_{q})$, 
and since 
$F(s, q)$ 
contains 
$(W, E_{s})$, 
by Proposition \ref{prop:hered}, 
we obtain 
$T_{\mathscr{P}_{1}}(F(s, q))=u$ 
and 
$T_{\mathscr{P}_{2}}(F(s, q))=v$. 
By the definitions of 
$(U_{q}, e_{q})$ 
and 
 $F(s, q)$,  
 and 
 by Lemmas 
 \ref{lem:produp} 
 and 
 \ref{lem:unionup}, 
we infer that  
$T_{\mathscr{P}_{3}}(F(s, q))=w$.  
Thus,  we obtain 
$F(s, q)\in \mathscr{X}(u, v, w)$.

Therefore we conclude that 
 Theorem \ref{thm:q-emb} holds true. 
\end{proof}

\begin{cor}\label{cor:geodesicuvw}
For every 
$(u, v, w)\in \{0, 1, 2 \}^{3}$, 
the set 
$\mathscr{X}(u, v, w)$ 
is a geodesic space. 
\end{cor}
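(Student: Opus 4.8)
The plan is to deduce the corollary immediately from Theorem \ref{thm:q-emb}. Fix $(u,v,w)\in\{0,1,2\}^{3}$ and let $(X,d),(Y,e)\in\mathscr{X}(u,v,w)$ be arbitrary; the goal is to produce a geodesic from $(X,d)$ to $(Y,e)$ whose image is contained in $\mathscr{X}(u,v,w)$. First I would dispose of the degenerate case $\grdis((X,d),(Y,e))=0$: here the two points coincide in $\grsp$, and the constant curve $\gamma\equiv(X,d)$ satisfies $\grdis(\gamma(s),\gamma(t))=0=|s-t|\cdot\grdis(\gamma(0),\gamma(1))$ for all $s,t\in[0,1]$, so it is a geodesic, and its image $\{(X,d)\}$ lies in $\mathscr{X}(u,v,w)$.

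So assume $\grdis((X,d),(Y,e))>0$. Take $A=\{0,1\}$, which is a closed subset of $[0,1]$ containing $\{0,1\}$. By Theorem \ref{thm:q-emb} there is an $A$-branching bunch of geodesics $F:[0,1]\times\qcube\to\mathscr{X}(u,v,w)$ from $(X,d)$ to $(Y,e)$. Fixing any $q\in\qcube$ and setting $F_{q}(s)=F(s,q)$, condition (\ref{item:Fq}) of Definition \ref{df:bbg} tells us that $F_{q}\colon[0,1]\to\grsp$ is a geodesic from $(X,d)$ to $(Y,e)$, while by construction its image is contained in the codomain $\mathscr{X}(u,v,w)$. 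Since a curve taking values in the metric subspace $\mathscr{X}(u,v,w)\subset(\grsp,\grdis)$ is a geodesic of that subspace precisely when it is a geodesic of $\grsp$ with image in $\mathscr{X}(u,v,w)$, the curve $F_{q}$ witnesses that $(X,d)$ and $(Y,e)$ are joined by a geodesic inside $\mathscr{X}(u,v,w)$.

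Combining the two cases, every pair of points of $\mathscr{X}(u,v,w)$ is joined by a geodesic lying in $\mathscr{X}(u,v,w)$; since $\mathscr{X}(u,v,w)$ is moreover non-empty (indeed dense in $\grsp$ by Lemma \ref{lem:denseuvw}), it is a geodesic space. I do not expect any genuine obstacle here: all the real work is already contained in Theorem \ref{thm:q-emb}, and the only points that need care are the trivial case of coinciding endpoints and the remark that ``geodesic in the subspace'' coincides with ``geodesic in $\grsp$ with image in the subspace.'' One could equally extract the needed geodesic from Theorem \ref{thm:branch} or Theorem \ref{thm:topemb}, but invoking Theorem \ref{thm:q-emb} with $A=\{0,1\}$ is the most direct route.
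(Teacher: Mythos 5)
Your proof is correct and follows the same route the paper intends: the corollary is stated immediately after the proof of Theorem \ref{thm:q-emb} precisely because one extracts a single geodesic $F_{q}$ from an $A$-branching bunch with $A=\{0,1\}$. Your explicit handling of the degenerate case $\grdis((X,d),(Y,e))=0$ via the constant curve is a sensible addition, since the construction in the proof of Theorem \ref{thm:q-emb} is only carried out under the assumption that the distance is positive.
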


Next we prove Theorems \ref{thm:q-embinfinite}, 
\ref{thm:Cantoremb},  and 
\ref{thm:branch}. 
\begin{lem}\label{lem:perfectinfinite}
For a dimensional function 
$\dimdim$, 
there exists a perfect compact metric space 
$(X, d)$ 
such that 
$\dimdim(X, d)=\infty$. 
\end{lem}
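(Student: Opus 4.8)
The plan is to take a compact metric space whose $\dimdim$-value is infinite and make it perfect by forming a product with a Cantor set, while invoking the monotonicity axiom for dimensional functions to preserve the infinite value.

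First I would fix a compact metric space $(Y, e)$ with $\dimdim(Y, e)=\infty$; such a space exists since $\iii(\dimdim)\neq\emptyset$ (equivalently, by the second property in the definition of a dimensional function applied with $\epsilon=1$). Let $(\Gamma, \rho)$ denote the Cantor set equipped with the Euclidean metric, fix a point $c_{0}\in\Gamma$, and put $(X, d)=(Y\times\Gamma, e\times_{\infty}\rho)$. Then $X$ is compact, being a product of two compact spaces. Moreover $X$ is perfect: given $(y, c)\in X$, choose a sequence $\{c_{n}\}$ in $\Gamma$ with $c_{n}\to c$ and $c_{n}\neq c$ (possible since $\Gamma$ is perfect); then $d((y, c), (y, c_{n}))=\rho(c, c_{n})\to 0$ while $(y, c_{n})\neq(y, c)$, so $(y, c)$ is not isolated.

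It then remains to check that $\dimdim(X, d)=\infty$. The set $Y\times\{c_{0}\}$ is closed in $X$ (as $\{c_{0}\}$ is closed in $\Gamma$), and the restriction of $e\times_{\infty}\rho$ to it coincides with $e$, so $(Y\times\{c_{0}\}, d)$ is isometric to $(Y, e)$. By the monotonicity property of dimensional functions, $\dimdim(X, d)\ge\dimdim(Y\times\{c_{0}\}, d)=\dimdim(Y, e)=\infty$, whence $\dimdim(X, d)=\infty$. There is essentially no obstacle here; the only mildly delicate points—that a product with a perfect space is perfect, and that $(Y,e)$ sits inside $(X,d)$ as an isometric closed copy via the $\ell^{\infty}$-product metric—are both immediate from the definitions.
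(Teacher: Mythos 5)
Your proposal is correct and follows essentially the same route as the paper, which takes a space $(Y,e)$ with $\dimdim(Y,e)=\infty$ and forms the $\ell^{\infty}$-product with a perfect compact space (you simply specialize to the Cantor set and write out the details of perfectness and of the closed isometric copy $Y\times\{c_{0}\}$ that the paper leaves implicit).
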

\begin{proof}
Take a compact metric space 
$(Y, e)$ 
with 
$\dimdim(Y, e)=\infty$, 
and take  a perfect  metric space 
$(P, h)$. 
Then 
$(X\times P, e\times_{\infty}h)$ 
is a 
desired one. 
\end{proof}

\begin{proof}[Proof of Theorem \ref{thm:q-embinfinite}]
Let 
$(C, h)$ 
be a perfect compact metric space with 
$\dimdim(C, h)=\infty$. 
Let 
$(P_{i, j}, p_{i, j})$ 
be the one-point metric space. 
Put 
$\mathcal{P}=\{(P_{i, j}, p_{i, j})\}_{i\in \{1, 2, 3\}, j\in \cind }$. 
Then the proof can be presented  in  
the same way as
 the proof of 
Theorem \ref{thm:q-emb} 
in the case of 
$w=0$. 
\end{proof}

\begin{proof}[Proof of Theorem \ref{thm:Cantoremb}]
Let $(X, d), (Y, e)\in \ccc$.
We take a correspondence 
$R\in \relacc_{opt}(X, d, Y, e)$. 
For each 
$s\in (0, 1)$, 
put  $d_{s}=(1-s)\cdot d+s\cdot e$.
Depending on whether 
the metric spaces $\{(R, d_{s})\}_{s\in (0, 1)}$ are doubling or not, 
let $F: [0, 1]\times \qcube\to \grsp$ be the map constructed in the proof of Theorem \ref{thm:q-emb} in the case of $(u, v, w)=(1, 1, 1)$ or $(0, 1, 1)$, respectively. 
Since a perfect totally disconnected  compact metric space  is in $\ccc$, 
we have $F(s, q)\in \ccc$ for all $(s, q)\in [0, 1]\times\qcube$. This completes the proof. 
\end{proof}

\begin{proof}[Proof of Theorem \ref{thm:branch}]
Since 
$[0, 1]$ 
is separable 
and 
$\card(\grsp)=2^{\aleph_{0}}$, 
the cardinality of geodesics is at most the continuum. 
By Theorem \ref{thm:q-emb}, 
we conclude that the cardinality of geodesics is exactly 
continuum. 
\end{proof}

\begin{cor}
For all dimensional  function 
$\dimdim$, 
the set 
$\iii(\dimdim)$ 
is a  geodesic space, and 
the set $\ccc$ is a geodesic space. 
\end{cor}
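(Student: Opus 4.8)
The plan is to read this off directly from Theorems~\ref{thm:q-embinfinite} and~\ref{thm:Cantoremb}, with essentially no new work. Fix a dimensional function $\dimdim$ and take two isometry classes $(X,d),(Y,e)\in\iii(\dimdim)$. If $\grdis((X,d),(Y,e))=0$, then $(X,d)$ and $(Y,e)$ coincide in $\grsp$ and the constant curve at that point is a (degenerate) geodesic joining them inside $\iii(\dimdim)$. Otherwise I would invoke Theorem~\ref{thm:q-embinfinite} with $A=\{0,1\}$ to obtain an $A$-branching bunch of geodesics $F\colon[0,1]\times\qcube\to\iii(\dimdim)$ from $(X,d)$ to $(Y,e)$, and then fix an arbitrary $q\in\qcube$. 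By condition~(\ref{item:Fq}) of Definition~\ref{df:bbg}, the slice $F_{q}\colon[0,1]\to\grsp$, $F_{q}(s)=F(s,q)$, is a geodesic of $(\grsp,\grdis)$ from $(X,d)$ to $(Y,e)$, and by the conclusion of Theorem~\ref{thm:q-embinfinite} its image is contained in $\iii(\dimdim)$.

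Next I would record the routine observation that a geodesic of $(\grsp,\grdis)$ whose image lies in a subset $\mathscr{S}\subseteq\grsp$ is automatically a geodesic of the metric subspace $(\mathscr{S},\grdis)$: the defining identity $\grdis(F_{q}(s),F_{q}(t))=|s-t|\cdot\grdis(F_{q}(0),F_{q}(1))$ only involves $\grdis$-distances between points already lying in $\mathscr{S}$, so it is unchanged by restricting the metric. Hence $F_{q}$ witnesses that $(X,d)$ and $(Y,e)$ are joined by a geodesic lying in $\iii(\dimdim)$, which is therefore a geodesic space. Replacing Theorem~\ref{thm:q-embinfinite} by Theorem~\ref{thm:Cantoremb} throughout gives the same conclusion for $\ccc$.

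I do not expect any real obstacle here, since all the substantive content is already packaged in the $A$-branching bunches of geodesics with prescribed image. The only minor points are the degenerate case $(X,d)=(Y,e)$, handled by the constant curve, and the elementary remark above that restricting $\grdis$ to an invariant subset preserves the geodesic property; compare Corollary~\ref{cor:geodesicuvw} for the analogous statement about $\mathscr{X}(u,v,w)$.
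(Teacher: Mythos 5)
Your proposal is correct and is exactly the argument the paper intends: the corollary is stated as an immediate consequence of Theorems \ref{thm:q-embinfinite} and \ref{thm:Cantoremb}, obtained by fixing any $q\in\qcube$ and taking the slice geodesic $F_q$, whose image lies in the relevant set. Your explicit handling of the degenerate case $\grdis((X,d),(Y,e))=0$ and the remark that a geodesic with image in a subset is a geodesic of that subset are the only (routine) details the paper leaves implicit.
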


We next construct a topological embedding 
of 
$\qcube$ 
into 
$\grsp$ 
 using 
a branching bunch of geodesics stated in 
Theorem \ref{thm:q-emb}.

A metrizable space 
$X$ 
is said to be 
 an \emph{absolute retract} (\emph{AR} for short), 
 if for every topological embedding 
 $f$ 
 from 
 $X$ 
 into a metrizable space 
 $Y$ 
 the image 
 $f(X)$ 
 is a retract of 
 $Y$. 
The following is known as the 
Keller--Dobrowolski--Toru\'nczyk  characterization of 
the Hilbert cube 
$\qcube$. 
The proof is presented  in 
\cite[Theorem 3.9.2]{Sakai2020topology}. 
\begin{thm}\label{thm:kdt}
Let 
$E$ 
be a metrizable 
linear space. 
A convex set  of  
$E$ is 
homeomorphic to the Hilbert cube 
$\qcube$ 
if and only if 
it is an infinite-dimensional compact AR.
\end{thm}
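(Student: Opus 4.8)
This is the Keller--Dobrowolski--Toru\'nczyk characterization, and I would prove the two implications separately. For \emph{necessity}, observe that $\qcube$ is itself a compact, infinite-dimensional AR -- it is a compact convex subset of the Fr\'echet space $\rr^{\nn}$, hence an AR by Dugundji's extension theorem, and $\dim\qcube=\infty$ -- and each of the three properties is a topological invariant; so any space, in particular any convex set, homeomorphic to $\qcube$ has all three. This direction needs nothing beyond the standard facts just cited.

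For \emph{sufficiency}, let $C$ be an infinite-dimensional compact convex subset of a metrizable linear space $E$ that is an AR (this last hypothesis is in fact automatic, but I would simply use it). The plan is to invoke Toru\'nczyk's topological characterization of the Hilbert cube: a compact metrizable AR is homeomorphic to $\qcube$ if and only if it has the disjoint-cells property, i.e.\ for every $n\in\nn$ and every $\varepsilon>0$, any two maps $f,g\colon[0,1]^{n}\to C$ admit $\varepsilon$-approximations $f',g'\colon[0,1]^{n}\to C$ with disjoint images. Since $C$ is already a compact metrizable AR, it remains only to verify this property, and here the convex structure is decisive: an infinite-dimensional compact convex set has room in infinitely many independent directions. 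Concretely, given $f$, $g$, and $\varepsilon$, I would fix a balanced $0$-neighborhood of $E$ of diameter less than $\varepsilon$, use that $C$ is not contained in any finite-dimensional affine subspace to choose points of $C$ outside the affine span of the (compact, finite-dimensional) images, and then replace $f$ and $g$ by convex combinations of themselves with two such points, arranged so that the resulting images land in disjoint closed subsets of $C$; compactness of $C$ and continuity of scalar multiplication make the perturbations smaller than $\varepsilon$. Iterating a finite-dimensional version of this pushing-off step and passing to a limit produces $f'$ and $g'$.

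The main obstacle is that $E$ need not be locally convex: the classical Keller--Klee treatment of the locally convex case rests on Hahn--Banach separation and on affinely embedding $C$ into a Fr\'echet space, and Roberts' examples show such an affine embedding can genuinely fail. The disjoint-cells perturbations must therefore be produced intrinsically from the convex operations of $C$, together with a barycentric, partition-of-unity type construction, with their sizes controlled uniformly through compactness -- this is the substance of the Dobrowolski--Toru\'nczyk refinement. Once the disjoint-cells property is established, Toru\'nczyk's criterion finishes the argument.
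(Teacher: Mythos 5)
First, a point of comparison: the paper does not prove this statement at all --- it is quoted as the Keller--Dobrowolski--Toru\'nczyk characterization and its proof is delegated entirely to \cite[Theorem 3.9.2]{Sakai2020topology}. So there is no internal argument to match; the expected treatment is a citation. Judged on its own terms, your necessity direction is fine, and your overall plan for sufficiency (Toru\'nczyk's characterization of $\qcube$ as a compact metrizable AR with the disjoint-cells property, verified using the convex structure) is indeed the strategy used in the literature.

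As an actual proof, however, the sketch has genuine gaps. The decisive one is your parenthetical that the images $f([0,1]^{n})$ and $g([0,1]^{n})$ are finite-dimensional: a continuous image of a cell in $C$ need not lie in any finite-dimensional affine subspace (a Peano map $[0,1]\to\qcube$ already has image spanning an infinite-dimensional subspace), so the step ``choose points of $C$ outside the affine span of the images'' can be vacuous, since that affine span may be the whole affine span of $C$. The correct order of operations is to first approximate $f$ and $g$ by maps into finite-dimensional convex subsets of $C$ (convex hulls of finite subsets, via a nerve or partition-of-unity construction), and it is precisely this approximation step that is delicate when $E$ is not locally convex; you defer exactly this to ``the substance of the Dobrowolski--Toru\'nczyk refinement'' rather than carrying it out, so the hard part of the theorem is not actually proved. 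Separately, your aside that the AR hypothesis is ``in fact automatic'' is unjustified: for non-locally-convex $E$ this is a deep matter (tied to Cauty's work and to long-standing problems about compact convex sets in metric linear spaces), and the theorem carries AR as a hypothesis precisely for that reason. Since the paper itself only cites the result, the appropriate course is either to do likewise or to supply the Dobrowolski--Toru\'nczyk argument in full.
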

\begin{cor}\label{cor:Hilbertcube}
Define 
$m: [0, 1]\to [0, 1]$ 
by 
$m(s)=\min\{s, 1-s\}$, 
and 
define 
$\mathbf{K}=\{\, (s, m(s)\cdot q)\mid s\in [0, 1], q\in \qcube\, \}$. 
Then 
$\mathbf{K}$ 
is homeomorphic to the Hilbert cube
 $\qcube$. 
\end{cor}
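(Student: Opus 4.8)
The plan is to realize $\mathbf{K}$ as an infinite-dimensional compact convex absolute retract sitting inside a metrizable linear space, and then to apply the Keller--Dobrowolski--Toru\'nczyk characterization, Theorem~\ref{thm:kdt}. I would work inside $E=\rr\times\rr^{\nn}$ equipped with the product topology; this is a Fr\'echet space, hence a locally convex metrizable linear space, and it contains $[0,1]\times\qcube$ with $\qcube=[0,1]^{\nn}$.

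First I would rewrite $\mathbf{K}$ in a transparent form. Since $m(s)=\min\{s,1-s\}\ge 0$, a point $(s,y)\in\rr\times\rr^{\nn}$ belongs to $\mathbf{K}$ if and only if $s\in[0,1]$ and $0\le y_{n}\le m(s)$ for every $n\in\nn$: when $m(s)>0$ one recovers $q\in\qcube$ from $q_{n}=y_{n}/m(s)$, and when $m(s)=0$ necessarily $s\in\{0,1\}$, which forces $y=0=m(s)\cdot q$. Equivalently,
\[
\mathbf{K}=\{(s,y)\in\rr\times\rr^{\nn}\mid s\ge 0,\ s\le 1,\ y_{n}\ge 0,\ y_{n}-s\le 0,\ y_{n}+s\le 1\ \text{for all }n\in\nn\}.
\]
Each of these conditions is of the form ``(continuous linear functional) $\le$ (constant)'' or ``$\ge$ (constant)'', so it defines a closed half-space of $E$; hence $\mathbf{K}$, as their intersection, is closed and convex in $E$. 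Summing $y_{n}-s\le 0$ and $y_{n}+s\le 1$ gives $y_{n}\le 1/2$, so $\mathbf{K}\subseteq[0,1]\times[0,1/2]^{\nn}$, which is compact; being a closed subset of a compact Hausdorff space, $\mathbf{K}$ is compact.

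Next, $\mathbf{K}$ contains $\{1/2\}\times[0,1/2]^{\nn}$ (take $s=1/2$, where $m(1/2)=1/2$), and $q\mapsto(1/2,(1/2)q)$ is a homeomorphism of $\qcube$ onto this set, being a continuous injection from a compact space into a Hausdorff space. Since $\mathbf{K}$ is separable metrizable and the covering dimension of a subspace does not exceed that of the ambient space, $\dim\mathbf{K}\ge\dim\qcube=\infty$, so $\mathbf{K}$ is infinite-dimensional. Finally, $\mathbf{K}$ is a convex subset of the locally convex metrizable linear space $E$, hence an absolute retract (see \cite{Sakai2020topology}). Therefore $\mathbf{K}$ is an infinite-dimensional compact convex AR in a metrizable linear space, and Theorem~\ref{thm:kdt} gives $\mathbf{K}\cong\qcube$. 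No step here is a genuine obstacle; the only point requiring a little care is the identification of $\mathbf{K}$ with the displayed intersection of half-spaces --- in particular the degenerate fibres over $s\in\{0,1\}$ --- together with the choice of the ambient Fr\'echet space $E$, after which convexity, compactness, infinite-dimensionality, and the AR property are each immediate or a standard citation.
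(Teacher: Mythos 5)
Your proposal is correct and follows essentially the same route as the paper: exhibit $\mathbf{K}$ as a compact, convex, infinite-dimensional subset of a locally convex metrizable linear space, invoke the Dugundji extension theorem to get the AR property, and conclude via Theorem~\ref{thm:kdt}. The only difference is that you write out the verifications of convexity (via the half-space description handling the degenerate fibres over $s\in\{0,1\}$), compactness, and infinite-dimensionality, which the paper simply asserts.
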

\begin{proof}
The set 
$\mathbf{K}$ 
is a  compact convex infinite-dimensional subset of the 
locally convex topological linear space 
$\rr^{\nn}$. 
By the Dugundji extension 
theorem (see \cite[Theorem 1.13.1]{Sakai2020topology}), 
$\mathbf{K}$ 
is an AR. 
Thus, 
by Theorem \ref{thm:kdt},
the set 
$\mathbf{K}$ 
is homeomorphic to 
$\qcube$. 
\end{proof}

\begin{proof}[Proof of Theorem \ref{thm:topemb}]
Let 
$(X, d), (Y, e)\in \mathscr{S}$. 
Let 
$F:[0, 1]\times \qcube\to \mathscr{X}(u, v, w)$ 
be 
a 
$\{0, 1\}$-branching bunch of  geodesics  stated 
in Theorem  \ref{thm:q-emb} 
or 
\ref{thm:q-embinfinite}. 
We define a map  
$G: \mathbf{K}\to \mathscr{X}(u, v, w)$ 
by 
\[
G(a, b)=
\begin{cases}
(X, d) & \text{if $a=0$;}\\
(Y, e) & \text{if $a=1$;}\\ 
F(a, b/m(a)) & \text{otherwise.}
\end{cases}
\] 
Subsequently, 
$G$ 
is continuous and injective.
Since 
$\mathbf{K}$ 
is compact, 
the map 
$G$ 
is a topological embedding. 
This completes  the proof. 
 \end{proof}

\begin{proof}[Proof of Theorem \ref{thm:locinfinite}]
Let 
$\mathscr{S}$ 
be any one of  
$\mathscr{X}(u, v, w)$ 
for some triple 
$(u, v, w)\in \{0, 1, 2\}^{3}$ 
or 
$\iii(\dimdim)$ 
for some 
dimensional function 
$\dimdim$
or $\ccc$. 
Let $O$ be a non-empty open subset of $\grsp$. 
By Lemma \ref{lem:denseuvw}, 
the set $\mathscr{S}$ is dense in 
$\grsp$. Thus, 
$\mathscr{S}\cap O$ is non-empty. 
Take $(X, d)\in \mathscr{S}\cap O$
and take $r\in (0, \infty)$ such that 
the closed ball centered at $(X, d)$ with radius $r$
is contained in $O$. 
Take $(Y, e)\in \mathscr{S}$ with 
$\grdis((X, d), (Y, e))=r$.
The existence of such $(Y, e)$ is 
guaranteed by the existence of geodesics in 
$\mathscr{S}$.  
Applying 
Theorems \ref{thm:q-emb}
 and  \ref{thm:topemb} to $(X, d)$, 
 $(Y, e)$ and $A=\{0, 1\}$, 
 we obtain a $\{0, 1\}$-branching bunch $F: [0, 1]\times \qcube\to \mathscr{S}$ of geodesics from 
 $(X, d)$ to $(Y, e)$. 
 By the choice of $r$, 
 we obtain  $F([0, 1]\times \qcube)\subset \mathscr{S}\cap O$. 
 Since $F([0, 1]\times \qcube)$ contains a homeomorphic copy 
 of $\qcube$, 
 the set $\mathscr{S}\cap O$ is infinite-dimensional. 
\end{proof}

\section{Table of symbols}\label{sec:table}
\setlongtables
\begin{tabularx}{\textwidth}{|l|X|}

\endhead
\hline 

Symbol 
&
Description
\\
\hline
\hline 

$\grsp$ &
The set of all isometry classes of compact metric spaces. 
\\
\hline

$\grdis$ &
The Gromov--Hausdorff distance. 
\\
\hline

$\qcube$ & 
The Hilbert cube. 
\\ 
\hline

$B(x, r)$ & 
The closed ball centered at $x$ with radius $r$. 
\\
\hline

$\cind$ &
The set of all positive integers. 
\\
\hline 

$\met(X)$ & 
The set of all topologically compatible metrics of 
a metrizable space $X$.
\\
\hline

$\mathscr{P}_{k}$ & 
$\mathscr{P}_1$: the doubling property, 
 $\mathscr{P}_2$: the uniform disconnectedness, 
  $\mathscr{P}_3$: the uniform perfectness. 
  \\
  \hline

$\setd$ &
The set of all doubling compact metric spaces in $\grsp$. 
\\
\hline 

$\setud$ & 
The set of all uniformly disconnected compact spaces in $\grsp$. 
\\
\hline

$\setup$ & 
The set of all uniformly perfect compact spaces in $\grsp$. 
\\
\hline 

$T_{\mathscr{P}}(X, d)$ &
The truth value of the statement that $(X, d)$ satisfies a property $\mathscr{P}$. 
\\
\hline 

$\mathscr{Q}_{k}$ & 
$\mathscr{Q}_{1}=\setd$, 
$\mathscr{Q}_{2}=\setud$, 
and $\mathscr{Q}_{3}=\setup$. 
\\
\hline

$\mathscr{E}_{k}(u)$ & 
$\mathscr{E}_{k}(0)=\grsp \setminus \mathscr{Q}_{k}$, 
$\mathscr{E}_{k}(1)=\mathscr{Q}_{k}$, 
and 
$\mathscr{E}_{k}(2)=\grsp$. 
\\
\hline

$\mathscr{X}(u, v, w)$
& 
$\mathscr{X}(u, v, w)=\mathscr{E}_{1}(u)\cap 
\mathscr{E}_{2}(v)\cap 
\mathscr{E}_{3}(w)$. 
\\
\hline 

$\iii(\dimdim)$ & 
The set of all $(X, d)$ in $\grsp$ with 
$\dimdim(X, d)=\infty$ for a dimensional function $\dimdim$. 
\\
\hline 

$\ccc$ &
The set of all compact metric spaces homeomorphic to 
the (middle-third) Cantor set. 
\\
\hline 

$\lor$ & 
The maximum operator on $\rr$. 
\\
\hline 

$\land$ & 
The minimum operator on $\rr$. 
\\
\hline

$\card(A)$ & 
The cardinality of a set $A$. 
\\
\hline

$\delta_{d}(A)$ & 
The diameter of $A$ by a metric $d$.  
\\
\hline 

$\alpha_{d}(A)$ &
$\alpha_{d}(A)=
\inf\{\, d(x, y)\mid x\neq y, x, y\in A\, \}$. 
\\
\hline 

$\dim_{A}(X, d)$ & 
The Assouad dimension of 
$(X, d)$. 
\\
\hline 

$d\times_{\infty} e$ & 
The product metric defined by 
$(d\times_{\infty} e)((x, y), (u, v))=
d(x, u) \lor e(y, v)$. 
\\
\hline 

$X\sqcup Y$ & 
The direct sum of $X$ and $Y$. 
\\
\hline 

$\coprod_{i\in I}X_{i}$ & 
The direct sum of a family $\{X_{i}\}_{i\in I}$.
\\
\hline 

$\mathcal{S}_{\mathscr{P}}(X,d)$  & 
The set of all points in 
$(X, d)$ 
of which no neighborhoods 
satisfy a property 
$\mathscr{P}$. 
\\
\hline 

$\rela(X, Y)$ & 
The set of all correspondences 
of 
$X$ 
and  
$Y$ (see Subsection 
\ref{subsec:gh}). 
\\
\hline 

$\relacc(X, d, Y, e)$ & 
The set of all 
closed correspondences in 
$X\times Y$ in 
$\rela(X, d, Y, e)$. 
\\
\hline

$\relacc_{opt}(X, d, Y, e)$ & 
The set of all optimal 
$R\in \relacc(X, d, Y, e)$. 
\\
\hline

$\dis(R)$ & 
$\dis(R)=\sup_{(x,y), (u,v)\in R}|d(x,u)-e(y,v)|$.
\\
\hline

$\Delta_{X}$ & 
$\Delta_{X}=\{\, (x, x)\mid x\in X\, \}\in \rela(X, X)$. 
\\
\hline 

$\widehat{n}$ & 
$\widehat{n}=\{1, \dots, n\}$.
\\
\hline

$(T(\mathcal{X}), d_{\mathcal{X}})$ & 
The telescope space for $\mathcal{X}$ 
defined in Section \ref{sec:const}.
\\
\hline 

$\mathcal{P}_{q}$ & 
$\mathcal{P}_{q}=(\mathcal{P}, q)$,
where 
$q\in \qcube$ and 
$\mathcal{P}=
\{(P_{i, j}, p_{i, j})\}_{i\in \{1, 2, 3\}, j\in \cind}$
be a sequence of compact metric spaces  indexed by 
$\{1, 2, 3\}\times \cind$ (see Definition  \ref{df:tail}). 
\\
\hline 

$(U(\mathcal{P}_{q}), e_{\mathcal{P}_q})$ & 
The telescope space constructed 
in Definition \ref{df:tail}. 
\\
\hline

$2^{\omega}$ & The set of  all maps from $\zz_{\ge 0}$
into $\{0, 1\}$. 
\\
\hline

$v(x, y)$  
&
$v(x, y)=\min\{\, n\in \zz_{\ge 0}\mid x(n)\neq y(n)\, \}$ if 
$x\neq y$; 
otherwise 
$v(x, y)=\infty$. 
\\
\hline

$g_{c}:\zz_{\ge 0}\cup\{\infty\}\to [0, \infty)$ & 
$g_{c}(n)=c^{n}$ 
if 
$n\in \zz_{\ge 0}$ 
and 
$g_{c}(\infty)=0$ for $c\in (0, 1)$.  
\\
\hline 

$\beta_{c}$
&
The metric $\beta_{c}$ 
on 
$2^{\omega}$ 
defined 
by 
$\beta_{c}(x, y)=g_{c}(v(x, y))$
for $c\in (0, 1)$.
\\
\hline 

$\aaa(v, X, d)$ & 
The set of all 
$x\in X$ 
for which 
there exists 
$r\in (0, \infty)$ 
such that 
for every 
$\epsilon \in (0, r)$ 
we have 
$\dim_{A}(B(x, \epsilon), d)$. 
The auxiliary invariant under isometries  for the proof of Proposition \ref{prop:Uiso}. 
\\
\hline 

\end{tabularx}

\begin{ac}
The author would like to thank Takumi Yokota  
for 
raising  questions,  
for the many stimulating conversations, and 
for the helpful comments. 
The author  would also  like to thank Editage (www.editage.com) for English language editing.
The author is deeply grateful 
to  the referee for helpful comments and suggestions.
\end{ac}

\begin{conflict}
The author states no conflict of interest. 
\end{conflict}

\bibliographystyle{amsplain}
\bibliography{bibtex/infiH.bib}

\providecommand{\bysame}{\leavevmode\hbox to3em{\hrulefill}\thinspace}
\providecommand{\MR}{\relax\ifhmode\unskip\space\fi MR }
\providecommand{\MRhref}[2]{%
  \href{http://www.ams.org/mathscinet-getitem?mr=#1}{#2}
}
\providecommand{\href}[2]{#2}
\begin{thebibliography}{10}

\bibitem{BBI}
D.~Burago, Y.~Burago, and S.~Ivanov, \emph{A course in metric geometry},
  Graduate Studies in Mathematics, vol.~33, American Mathematical Society,
  Providence, RI, 2001.

\bibitem{CM2018}
S.~Chowdhury and F.~M\'{e}moli, \emph{Explicit geodesics in
  {G}romov-{H}ausdorff space}, Electron. Res. Announc. Math. Sci. \textbf{25}
  (2018), 48--59.

\bibitem{DS1997}
G.~David and S.~Semmes, \emph{Fractured fractals and broken dreams: Self
  similar geometry through metric and measure}, Oxford Lecture Ser. Math.
  Appl., vol.~7, Oxford Univ. Press, 1997.

\bibitem{fraser2020assouad}
J.~M. Fraser, \emph{Assouad dimension and fractal geometry}, Tracts in
  Mathematics Series, vol. 222, Cambridge University Press, 2020.

\bibitem{H2001}
J.~Heinonen, \emph{Lectures on analysis on metric spaces}, Springer-Verlag, New
  York, 2001.

\bibitem{Ishiki2019}
Y.~Ishiki, \emph{Quasi-symmetric invariant properties of {C}antor metric
  spaces}, Ann. Inst. Fourier (Grenoble) \textbf{69} (2019), no.~6, 2681--2721.

\bibitem{Ishiki2020int}
\bysame, \emph{An interpolation of metrics and spaces of metrics},  (2020),
  preprint, arXiv:2003.13277.

\bibitem{ishiki2021dense}
\bysame, \emph{On dense subsets in spaces of metrics},  (2021),
  arXiv:2104.12450, to appear in Colloq. Math.

\bibitem{IIT2016}
A.~Ivanov, S.~Iliadis, and A.~Tuzhilin, \emph{Realizations of
  {G}romov--{H}ausdorff distance}, preprint arXiv:1603.08850 (2016).

\bibitem{INT2016}
A.~O. Ivanov, N.~K. Nikolaeva, and A.~A. Tuzhilin, \emph{The
  {G}romov-{H}ausdorff metric on the space of compact metric spaces is strictly
  intrinsic}, Mat. Zametki \textbf{100} (2016), no.~6, 947--950, translation in
  Math. Notes \textbf{100} (2016), no. 5-6, 883-885.

\bibitem{jansen2017notes}
D.~Jansen, \emph{Notes on pointed {G}romov--{H}ausdorff convergence},  (2017),
  arXiv:1703.09595.

\bibitem{K2018}
D.~P. Klibus, \emph{Convexity of a ball in the {G}romov--{H}ausdorff space},
  Moscow Univ. Math. Bull. \textbf{73} (2018), no.~6, 249--253, Translation of
  Vestnik Moskov. Univ. Ser. I Mat. Mekh. {{\bf{2}}018}, no. 6, 41--45.

\bibitem{memoli2021characterization}
F.~M{\'e}moli and Z.~Wan, \emph{Characterization of {G}romov-type geodesics},
  (2021), preprint, arXiv:2105.05369.

\bibitem{Rouyer2011}
J.~Rouyer, \emph{Generic properties of compact metric spaces}, Topology Appl.
  \textbf{158} (2011), no.~16, 2140--2147.

\bibitem{Sakai2020topology}
K.~Sakai, \emph{Topology of infinite-dimensional manifolds}, Springer, 2020.

\bibitem{Wan2021}
Z.~Wan, \emph{A novel construction of {U}rysohn universal ultrametric space via
  the {G}romov-{H}ausdorff ultrametric}, Topology Appl. \textbf{300} (2021),
  107759.

\end{thebibliography}

\end{document}